\documentclass[11pt]{article}
\def\thetitle{Discrepancies of Spanning Trees and Hamilton Cycles}

\usepackage{graphicx}

\usepackage{amsmath,amssymb}

\usepackage[usenames,dvipsnames,svgnames,table]{xcolor}
\definecolor{CombinatoricaAqua}{HTML}{00698C}
\definecolor{CombinatoricaBlue}{HTML}{3A3293}
\definecolor{CombinatoricaBrown}{HTML}{66220C}
\definecolor{CombinatoricaRed}{HTML}{DF2A27}
\definecolor{HarvardCrimson}{rgb}{0.6471, 0.1098, 0.1882}

\makeatletter
\let\reftagform@=\tagform@
\def\tagform@#1{\maketag@@@
	{(\ignorespaces\textcolor{CombinatoricaBrown}{#1}\unskip\@@italiccorr)}}
\renewcommand{\eqref}[1]{\textup{\reftagform@{\ref{#1}}}}
\makeatother

\usepackage[backref=page]{hyperref}
\hypersetup{%
	unicode,
	pdfencoding=auto,
	pdfauthor={Peleg Michaeli},
	pdftitle={\thetitle},
	pdfsubject={},
	pdfkeywords={},
	colorlinks=true,
	citecolor=CombinatoricaBlue,
	linkcolor=CombinatoricaAqua,
	anchorcolor=CombinatoricaBrown,
	urlcolor=HarvardCrimson}

\usepackage{amsthm}
\usepackage{thmtools}
\usepackage{bbm}
\usepackage{enumitem}
\usepackage[capitalize]{cleveref}
\Crefname{fact}{Fact}{Facts}
\Crefname{claim}{Claim}{Claims}

\makeatletter
\pdfstringdefDisableCommands{\let\cref\@firstofone}
\makeatother

\declaretheoremstyle[
spaceabove=\topsep, spacebelow=\topsep,
headfont=\color{CombinatoricaBrown}\normalfont\bfseries,
bodyfont=\itshape,
]{thm}
\declaretheoremstyle[
spaceabove=\topsep, spacebelow=\topsep,
headfont=\color{CombinatoricaBrown}\normalfont\bfseries,
bodyfont=\normalfont,
]{dfn}
\declaretheoremstyle[
spaceabove=0.5\topsep, spacebelow=0.5\topsep,
headfont=\color{CombinatoricaBrown}\normalfont\bfseries,
bodyfont=\normalfont,
]{rmk}

\declaretheorem[style=thm,parent=section]{theorem}
\declaretheorem[style=thm,sibling=theorem]{lemma}
\declaretheorem[style=thm,sibling=theorem]{corollary}
\declaretheorem[style=thm,sibling=theorem]{claim}
\declaretheorem[style=thm,sibling=theorem]{proposition}

\declaretheorem[style=thm,sibling=theorem]{observation}


\declaretheorem[style=definition,numbered=no]{acknowledgements}

\usepackage[nobysame,msc-links]{amsrefs}

\renewcommand{\eprint}[1]{\href{https://arxiv.org/abs/#1}{arXiv:#1}}

\BibSpec{book}{%
	+{}  {\PrintPrimary}                {transition}
	+{,} { \textbf}                     {title} 
	+{.} { }                            {part}
	+{:} { \textit}                     {subtitle}
	+{,} { \PrintEdition}               {edition}
	+{}  { \PrintEditorsB}              {editor}
	+{,} { \PrintTranslatorsC}          {translator}
	+{,} { \PrintContributions}         {contribution}
	+{,} { }                            {series}
	+{,} { \voltext}                    {volume}
	+{,} { }                            {publisher}
	+{,} { }                            {organization}
	+{,} { }                            {address}
	+{,} { \PrintDateB}                 {date}
	+{,} { }                            {status}
	+{}  { \parenthesize}               {language}
	+{}  { \PrintTranslation}           {translation}
	+{;} { \PrintReprint}               {reprint}
	+{.} { }                            {note}
	+{.} {}                             {transition}
	+{}  {\SentenceSpace \PrintReviews} {review}
}
\BibSpec{incollection}{%
  +{}  {\PrintAuthors}                {author}
  +{,} { \textit}                     {title}
  +{.} { }                            {part}
  +{:} { \textit}                     {subtitle}
  +{,} { \PrintContributions}         {contribution}
  +{,} { \PrintConference}            {conference}
  +{}  {\PrintBook}                   {book}
  +{,} { }                            {booktitle}
	+{}  { \PrintEditorsB}              {editor}
	+{,} { }                            {publisher}
  +{,} { \PrintDateB}                 {date}
  +{,} { pp.~}                        {pages}
  +{,} { }                            {status}
  +{,} { \PrintDOI}                   {doi}
  +{,} { available at \eprint}        {eprint}
  +{}  { \parenthesize}               {language}
  +{}  { \PrintTranslation}           {translation}
  +{;} { \PrintReprint}               {reprint}
  +{.} { }                            {note}
  +{.} {}                             {transition}
  +{}  {\SentenceSpace \PrintReviews} {review}
}

\makeatletter
\renewcommand{\PrintNames@a}[4]{%
	\PrintSeries{\name}
	{#1}
	{}{ and \set@othername}
	{,}{ \set@othername}
	{}{ and \set@othername}
	{#2}{#4}{#3}%
}
\makeatother

\makeatletter
\def\mathcolor#1#{\@mathcolor{#1}}
\def\@mathcolor#1#2#3{%
	\protect\leavevmode
	\begingroup
	\color#1{#2}#3%
	\endgroup
}
\makeatother
\definecolor{Red}{rgb}{0.618,0,0}
\definecolor{Blue}{rgb}{0,0,1}
\definecolor{Green}{rgb}{0,0.298,0}

\usepackage{sectsty}
\sectionfont{\color{CombinatoricaBrown}}
\subsectionfont{\color{CombinatoricaBrown}}
\subsubsectionfont{\color{CombinatoricaBrown}}

\usepackage{soul}
\soulregister\ref7


\usepackage{pifont}
\usepackage{calc}

\usepackage[a4paper]{geometry}
\geometry{
	a4paper,
	left=25.63mm,
	right=25.63mm,
	top=36.25mm,
	bottom=36.25mm
}

\title{\thetitle}
\author{
  Lior Gishboliner\thanks{
    Department of Mathematics, ETH Z\"{u}rich, Switzerland.
    Email: \href{mailto:lior.gishboliner@math.ethz.ch}
                {\tt lior.gishboliner@math.ethz.ch}.}
  \and
  Michael Krivelevich\thanks{
    School of Mathematical Sciences,
    Tel Aviv University,
    Tel Aviv 6997801, Israel.
    Email: \href{mailto:krivelev@tauex.tau.ac.il}
                 {\tt krivelev@tauex.tau.ac.il}.
    Research supported in part by USA--Israel BSF grant 2018267
    and by ISF grant 1261/17.}
  \and
  Peleg Michaeli\thanks{
    School of Mathematical Sciences,
    Tel Aviv University,
    Tel Aviv 6997801, Israel.
    Email: \href{mailto:peleg.michaeli@math.tau.ac.il}
                 {\tt peleg.michaeli@math.tau.ac.il}.
    Research supported by ERC starting grant 676970 RANDGEOM
    and by ISF grant 1294/19.}
}

\makeatletter
\def\namedlabel#1#2{\begingroup
  #2%
  \def\@currentlabel{#2}%
  \phantomsection\label{#1}\endgroup
}
\makeatother

\usepackage{mleftright}
\mleftright

\newcommand{\defn}[1]{{\bfseries #1}}

\newcommand{\eps}{\varepsilon}
\renewcommand{\phi}{\varphi}

\newcommand{\cX}{\mathcal{X}}

\newcommand{\cC}{\mathcal{C}}
\newcommand{\cG}{\mathcal{G}}

\newcommand{\cT}{\mathcal{T}}


\newcommand{\floor}[1]{\left\lfloor{#1}\right\rfloor}
\newcommand{\ceil}[1]{\left\lceil{#1}\right\rceil}

\DeclareMathOperator{\bw}{bw}

\newcommand{\sm}{\setminus}
\newcommand{\es}{\varnothing}


\newcommand{\vect}{\mathbf}


\newcommand{\whp}[0]{\textbf{whp}}

\newcommand{\ind}{\mathbf{1}}


\usepackage{tabto}

\newcommand{\Disc}{\mathcal{D}}

\renewcommand{\C}{\mathcal{C}}

\newcommand{\T}{\mathcal{T}}
\newcommand{\Tn}{\mathcal{T}_n}

\usepackage{subcaption}
\usepackage{tikz}
\usetikzlibrary{calc}

\begin{document}
\maketitle

\begin{abstract}
We study the multicolour discrepancy of spanning trees and Hamilton cycles in graphs.
As our main result, we show that under very mild conditions, the $r$-colour spanning-tree discrepancy of a graph $G$ is equal, up to a constant, to the minimum $s$ such that $G$ can be separated into $r$ equal parts by deleting $s$ vertices.
This result arguably resolves the question of estimating the spanning-tree discrepancy in essentially all graphs of interest.
In particular, it allows us to immediately deduce as corollaries most of the results that appear in a recent paper of Balogh, Csaba, Jing and Pluh\'{a}r, proving them in wider generality and for any number of colours. We also obtain several new results, such as determining the spanning-tree discrepancy of the hypercube.
For the special case of graphs possessing certain expansion properties, we obtain exact asymptotic bounds.

We also study the multicolour discrepancy of Hamilton cycles in graphs of large minimum degree, showing that in any $r$-colouring of the edges of a graph with $n$ vertices and minimum degree at least $\frac{r+1}{2r}n + d$, there must exist a Hamilton cycle with at least $\frac{n}{r} + 2d$ edges in some colour. This extends a result of Balogh et al., who established the case $r = 2$. The constant $\frac{r+1}{2r}$ in this result is optimal; it cannot be replaced by any smaller constant.
\end{abstract}

\section{Introduction}
\label{sec:intro}

Combinatorial discrepancy theory aims to quantify the following phenomenon: if a hypergraph $\mathcal{H}$ is ``sufficiently rich'', then in every $2$-colouring of the vertices of $\mathcal{H}$ there will be some hyperedge which is unbalanced, namely, has significantly more vertices in one of the colours than in the other.
The corresponding parameter, called the discrepancy of $\mathcal{H}$, is then defined as the maximum unbalance that is guaranteed to occur (on some hyperedge) in every $2$-colouring of $V(\mathcal{H})$. More concretely, assuming that the colours are $-1$ and $1$, we can define the unbalance of a hyperedge $e$ under a colouring $f : V(\mathcal{H}) \rightarrow \{-1,1\}$ to be $f(e) := \left| \sum_{x \in e}{f(x)} \right|$, and the discrepancy is then the minimum of $\max_e f(e)$ over all colourings $f$.
The study of such problems has a long and rich history, with several influential results.
We refer the reader to Chapter~4 in the book of Matou\v{s}ek~\cite{Mat} for a thorough overview.

In this paper, we are concerned with discrepancy questions in the context of graphs. In this setting, the vertices of the hypergraph $\mathcal{H}$ are the edges of some graph $G$, and the hyperedges of $\mathcal{H}$ correspond to subgraphs of $G$ of a particular type, such as spanning trees, cliques, Hamilton cycles, clique factors, etc. There are several classical results in this vein for the case that $G$ is a complete graph, including those of Erd\H{o}s and Spencer~\cite{ES72} and Erd\H{o}s, F\"{u}redi, Loebl and S\'{o}s~\cite{EFLS95}.
Recently, Balogh, Csaba, Jing and Pluh\'{a}r~\cite{BCJP20} (see also~\cite{BCPT21}) initiated the study of discrepancy problems for arbitrary graphs $G$, focusing on the discrepancy of spanning trees and Hamilton cycles.
In the present paper, we continue this study.
Our main result is a very general theorem on the discrepancy of spanning trees, which arguably resolves the problem of its estimation for all $3$-vertex-connected graphs (as well as all ``sufficiently expanding'' $2$-vertex-connected graphs; see the next section for the precise definition).

Our results apply to the more general setting of {\em multicolour discrepancy}.
While there are several natural ways to generalise the above definition of $2$-colour discrepancy to an arbitrary number of colours, the resulting parameters are all within a multiplicative factor of each other, making the choice mostly a matter of convenience.
Here we have chosen to use the following definition. For a hypergraph $\mathcal{H}$, an $r$-colouring
$f : V(\mathcal{H}) \rightarrow [r]$ of its vertices and a hyperedge $e \in E(\mathcal{H})$, define the unbalance of $e$ with respect to $f$ to be
\[
D_f(e) := r \cdot \left( \max_{1 \leq i \leq r}|f^{-1}(i) \cap e| - \frac{|e|}{r} \right).
\]
In other words, $D_f(e)$ measures (up to a scaling factor of $r$) the difference between the largest number of vertices of $e$ in a given colour and the average (which is $|e|/r$). Multiplying this difference by $r$ (as is done above) produces a more convenient definition (which always gives integer values).
For a colouring $f : V(\mathcal{H}) \rightarrow [r]$, define
\[
\Disc(\mathcal{H},f) = \max_{e \in E(\mathcal{H})}{D_f(e)}.
\]
The \defn{$r$-colour discrepancy} of $\mathcal{H}$ is then defined as
\[
\mathcal{D}_r(\mathcal{H}) :=
\min_{f : V(\mathcal{H}) \rightarrow [r]}
\Disc_r(\mathcal{H},f).
\]
Note that $\mathcal{D}_2(\mathcal{H})$ coincides with the ($2$-colour) discrepancy of $\mathcal{H}$ defined in the beginning of this section.
For a graph $G$ and a set $\cX$ of subgraphs of $G$, we define $\Disc_r(G,\cX)$ to be the $r$-colour discrepancy of the hypergraph $\mathcal{H}$ with vertex-set $V(\mathcal{H}) = E(G)$ and edge-set $E(\mathcal{H}) = \cX$. We will also sometimes use the notation $\Disc(G,\cX,f)$, which is analogous to $\Disc(\mathcal{H},f)$.
It is worth mentioning that discrepancy-type questions for more than two colours already appear in the literature, see e.g.~\cite{DS03}. Moreover, very recently, such questions have also been considered in the context of discrepancy of graphs. Specifically, the multicolour discrepancy of Hamilton cycles in random graphs has been studied in~\cite{GKM20+}.

\subsection{Discrepancy of Spanning Trees}
Spanning trees are among the most basic objects in graph theory.
Let us denote the set of all trees on $n$ vertices by $\T_n$.
Hence, for an $n$-vertex graph $G$, $\Disc_r(G,\T_n)$ denotes the $r$-colour discrepancy of spanning trees of $G$.

We now introduce a graph parameter which will play a central role in our results in this section. For an integer $r \geq 2$ and a graph $G$, denote by $s_r(G)$ the minimum $s$ for which there is a partition $V(G) = V_1 \cup \dots \cup V_r \cup S$ such that $|V_1| = \dots = |V_r|$, $|S| = s$, and there are no edges between $V_i$ and $V_j$ for any $1 \leq i < j \leq r$.
Such a partition is called a \defn{balanced $r$-separation} of $G$,
a set $S$ as above is called a \defn{balanced $r$-separator of $G$}, and $s_r(G)$ will be referred to as the \defn{balanced $r$-separation number} of $G$.

Balogh, Csaba, Jing and Pluh\'{a}r~\cite{BCJP20} observed that the $2$-colour spanning-tree discrepancy of $G$ is no larger than $s_2(G) - 1$.
This fact easily generalises to $r$ colours.
Indeed, given an $n$-vertex graph $G$ and a partition
$V(G) = V_1 \cup \dots \cup V_r \cup S$ as above,
consider the $r$-colouring $f:E(G)\to[r]$ defined
by assigning colour~$i$ to all edges which intersect $V_i$ ($i = 1,\dots,r$), and colouring the edges contained in $S$ arbitrarily.
Observe that if $T$ is a spanning tree of $G$,
then for every $i\in[r]$, the forest $T[V_i\cup S]$ has at least $|V_i|$ edges touching $V_i$, hence at least $|V_i|$ edges coloured $i$.
Since the total number of edges of $T$ is $n-1$, we have that the size of a maximum colour class is at most $(n-1)-(r-1)|V_1|=|S|-1+(n-|S|)/r$, hence
$\Disc_r(G,\T_n)\le r(|S|-1+(n-|S|)/r-(n-1)/r) = (r-1)(|S|-1)$.
This shows that
\begin{equation}\label{eq:ub}
  \Disc_r(G,\T_n) \leq (r-1)(s_r(G) - 1).
\end{equation}

Given \eqref{eq:ub}, it is natural to ask to which extent $s_r(G)$ ``controls" $\Disc_r(G,\mathcal{T}_n)$.
Unfortunately, these two parameters might be arbitrarily far apart.
In fact, it is not hard to construct graphs on $n$ vertices with $s_r(G)=\Theta(n)$ but $\Disc_r(G,\Tn)\le 1$.
Indeed, consider the following family of graphs.
A \defn{hedgehog} with proportion $r$ on $n$ vertices is a clique (``body'') on $n/r$ vertices (assuming $r$ divides $n$), each is connected to $(r-1)$ distinct vertices outside the clique (``spikes''; see \cref{fig:hedgehog,fig:hedgehog3}).
It is not hard to see that any balanced $r$-separator of the hedgehog is of linear size.
By colouring its body with the colour~$r$ and the $r-1$ spikes emerging from each vertex of the body by colours $1,\ldots,r-1$, one may verify that the $r$-colour spanning-tree discrepancy of the hedgehog is $1$.
This construction can be generalized to obtain graphs of large minimum degree (and even degree-regular ones) (see~\cref{fig:hedgehog5}) which still have the property that their $r$-separation number is $\Theta(n)$ while their spanning-tree discrepancy is only $O(1)$.%
\footnote{Indeed, for $d\ge 5$ one may construct such a $d$-regular ``hedgehog'' on $n$ vertices as follows.
Let $m=n/(d+2)$ and let $H_1,\ldots,H_m$ be disjoint $(d+1)$-cliques.
Remove a single edge from each clique, $H_i$ and connect both of its endpoints to a new vertex $v_i$.
Finally, endow the set $v_1,\ldots,v_m$ with a $(d-2)$-expander.}
However, a common notable property of all of these examples is that their vertex connectivity is $1$, namely, that they are not $2$-connected.%
\footnote{Here and later, by $k$-connected, we mean $k$-vertex-connected.}

\begin{figure*}[t!]
  \captionsetup{width=0.879\textwidth,font=small}
  \centering
  \begin{subfigure}[t]{0.29\textwidth}
    \centering
    \includegraphics{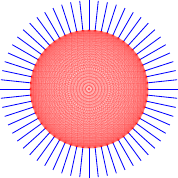}
    \caption{A hedgehog on $120$ vertices, whose body is coloured red and whose spikes are coloured blue.}
    \label{fig:hedgehog}
  \end{subfigure}%
  ~
  \begin{subfigure}[t]{0.04\textwidth}
    ~
  \end{subfigure}
  \begin{subfigure}[t]{0.29\textwidth}
    \centering
    \includegraphics{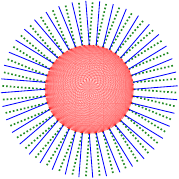}
    \caption{A hedgehog on $120$ vertices, whose body is coloured red and whose spikes are coloured green and blue.}
    \label{fig:hedgehog3}
  \end{subfigure}
  \begin{subfigure}[t]{0.04\textwidth}
  ~
  \end{subfigure}
  \begin{subfigure}[t]{0.29\textwidth}
    \centering
    \includegraphics{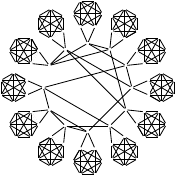}
    \caption{A $5$-regular ``hedgehog'' on $84$ vertices.
      Its body is a random $3$-regular graph on $12$ vertices.}
    \label{fig:hedgehog5}
  \end{subfigure}
  \caption{Hedgehogs.}
  \label{fig:hedgehogs}
\end{figure*}

Our main result shows that already the (rather weak) requirement of $3$-connectivity guarantees that there is a strong relation between balanced $r$-separations and $r$-colour spanning-tree discrepancy, namely, that these two parameters are a constant factor apart from each other. The same conclusion is valid for $2$-connected graphs in which $s_r(G)$ is large enough.
These statements are given in the following theorem.

\begin{theorem}\label{thm:trees}
	For every $r \geq 2$ there exists $C = C(r) > 0$ such that the following holds. Let $G$ be an $n$-vertex graph satisfying one of the following conditions:
	\begin{itemize}
	    \item $G$ is $3$-connected,
	    \item $G$ is $2$-connected and $s_r(G) \geq C\sqrt{n}$.
	\end{itemize}
  Then $\Disc_r(G,\mathcal{T}_n) \geq \floor{s_r(G)/C}$.
\end{theorem}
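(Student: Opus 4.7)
The plan is by contrapositive: starting from an $r$-colouring $f : E(G) \to [r]$ with $\Disc_r(G,\T_n,f) \le D$, construct a balanced $r$-separator of $G$ of size $O(D)$; then taking $D < \lfloor s_r(G)/C \rfloor$ for $C=C(r)$ sufficiently large forces the contradiction $s_r(G) < s_r(G)$. The first step is a convenient reformulation: for each colour $i$, the maximum number of $i$-coloured edges in a spanning tree of $G$ equals $n - c(G_i)$, where $G_i$ is the colour-$i$ subgraph and $c(\cdot)$ counts components, since any maximum spanning forest of $G_i$ extends to a spanning tree of $G$. Hence $\Disc_r(G,\T_n,f) = (r-1)n+1-r\min_i c(G_i)$, and the hypothesis supplies $f$ with $c(G_i) \ge \frac{(r-1)n+1-D}{r}$ for every $i$---which also forces each $G_i$-component to have size at most $n/r + O(D)$.

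Next, build the auxiliary $r$-partite, $r$-uniform multi-hypergraph $H$: its vertex set is the disjoint union of components of $G_1,\ldots,G_r$, and each $v\in V(G)$ contributes a hyperedge $\{C_1(v),\ldots,C_r(v)\}$, where $C_i(v)$ is the component of $v$ in $G_i$. Then $|E(H)| = n$ and $|V(H)| \ge (r-1)n+1-D$, so $H$ is extremely sparse---within $O(D)$ hyperedges of being an $r$-partite hyperforest---and has maximum degree at most $n/r + O(D)$, because the degree in $H$ of the vertex representing a component $C$ equals $|C|$. Crucially, a balanced $r$-separator of $G$ corresponds exactly to a partition $E(H)=E_1\sqcup\cdots\sqcup E_r\sqcup F$ with $|E_1|=\cdots=|E_r|$ such that, for every vertex of $H$, all incident hyperedges outside $F$ belong to a single $E_i$; the $G$-vertices in $F$ form a balanced $r$-separator of size $|F|$. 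The task therefore reduces to producing such a partition with $|F|=O(D)$.

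For $r=2$, where $H$ is a bipartite multigraph $O(D)$-close to a spanning tree $T_H$, this is a bisection problem for trees of bounded maximum degree: the degree bound $n/2 + O(D)$ precludes the ``star-like'' obstructions that would otherwise force a larger cut, and a suitable centroid/heavy-path decomposition of $T_H$ produces a vertex bipartition that is both edge-balanced and has cut $O(D)$. For $r\ge 3$ one iterates: merge colours $2,\ldots,r$ into a single super-colour to reduce to the $r=2$ case, bisect $G$ into two near-equal halves with a small separator, and recurse within each half with $r-1$ colours. The main obstacle I expect is the balanced tree-bisection claim, which is also where the connectivity hypothesis enters: without $3$-connectivity (or $s_r(G) \ge C\sqrt n$ for $2$-connected $G$), the hedgehog examples exhibit $H$ containing genuine star-like substructures admitting no cheap balanced partition, and the conclusion of the theorem genuinely fails.
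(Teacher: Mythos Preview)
Your setup is correct and matches the paper: the auxiliary $r$-partite hypergraph $H$, the bound $|V(H)|\ge (r-1)n-O(D)$, the observation that $d_H(C)=|C|$, and the translation of a balanced $r$-separator of $G$ into a hyperedge partition $E(H)=E_1\sqcup\cdots\sqcup E_r\sqcup F$ with $|F|$ small and each vertex of $H$ meeting at most one $E_i$. Where the proposal breaks down is the construction of this partition.

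The recursion you propose for $r\ge 3$ does not work. If you merge colours $2,\dots,r$ into a single super-colour, the resulting $2$-colouring need not have small discrepancy: in any spanning tree the super-colour may occupy as many as $(r-1)\cdot\frac{n-1+D}{r}$ edges, so its $2$-colour discrepancy is $\Theta(n)$ for $r\ge 3$, and you cannot invoke the $r=2$ case to get a small bisection separator. Nor does ``recurse within each half with $r-1$ colours'' make sense as stated: the induced colouring on a half carries no useful discrepancy bound, and iterated bisection does not produce $r$ equal parts unless $r$ is a power of $2$. This is a genuine gap, not just a missing detail.

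Even for $r=2$, invoking a centroid or heavy-path decomposition is not enough. The maximum-degree bound $n/2+O(D)$ alone does not rule out a vertex $X$ of $H$ with, say, $\sqrt{n}$ leaves attached; cutting $X$ off costs $\sqrt{n}$, not $O(D)$. This is exactly where the paper uses the connectivity hypotheses, and it does so in a structural way you have not captured: $2$-connectivity of $G$ forces the non-leaf part $H_0$ of $H$ to have no leaves of its own, and $3$-connectivity forces any $X$ with attached leaves to have $d_{H_0}(X)\ge 3$. A counting argument then shows that deleting $O(D)$ hyperedges reduces $H_0$ to a disjoint union of \emph{loose paths and cycles}; the leaves of $H$ attached to each vertex are handled separately, using that few vertices carry many leaves (and, under $3$-connectivity, those that do are already isolated after the reduction). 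Finally, a greedy walk along each loose path/cycle, filling $E_1,\dots,E_r$ in turn up to a threshold, produces the partition with $|F|=O(rD+r^2)$. Your outline would need to replace the recursion by this direct, all-colours-at-once analysis of $H$.
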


\Cref{thm:trees} can be interpreted as saying that under very mild assumptions,
having small balanced separations is the {\em only} cause of small spanning-tree discrepancy.

We also show that the lower-bound condition on $s_r(G)$ in the second item of \cref{thm:trees} is essentially tight, as there are $2$-connected $n$-vertex graphs $G$ with $s_r(G) = \Omega(\sqrt{n})$ and $\Disc_r(G,\mathcal{T}_n) \leq 1$.

\begin{proposition}\label{prop:tightness}
  For every $r\ge 2$ there exists $c=c(r)>0$ such that for infinitely many integers $n$, there exists an $n$-vertex $2$-connected graph $G$ with $s_r(G)\ge c\sqrt{n}$ and $\Disc_r(G,\Tn)\le 1$.
\end{proposition}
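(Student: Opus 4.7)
The plan is to construct, for each integer $r\ge 2$ and for every sufficiently large positive integer $m$, a 2-connected $n$-vertex graph $G=G_{r,m}$ on $n=m+(r-1)\binom m 2=\Theta(m^2)$ vertices satisfying $s_r(G)\ge m=\Omega(\sqrt n)$ and $\Disc_r(G,\Tn)\le 1$. The construction is a 2-connected analogue of the hedgehog: take a ``body'' $U=\{u_1,\dots,u_m\}$ inducing a clique $K_m$, and for every pair $1\le i<j\le m$ and every colour $c\in[r-1]$ introduce a degree-2 ``spike'' vertex $w_{i,j,c}$ adjacent precisely to $u_i$ and $u_j$. Thus the pendant spikes of the usual hedgehog are replaced by degree-$2$ vertices, restoring 2-connectivity.

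The 2-connectivity and separation bound follow easily. Deleting a body vertex $u_i$ leaves the remaining body as $K_{m-1}$ together with all spikes --- each spike incident to $u_i$ still has its second body-neighbour --- while deleting a spike is just removing a degree-2 vertex. For $s_r(G)\ge m$, observe that since $U$ induces a clique, any balanced $r$-separation with nonempty parts cannot split body vertices between distinct parts (an inter-part edge would result); and if all body vertices lay in a single part $V_1$, every spike would be forced to lie in $V_1\cup S$, making the remaining parts empty and contradicting balance. Hence all body vertices belong to $S$, giving $|S|\ge m=\Omega(\sqrt n)$.

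The main obstacle is exhibiting an $r$-edge-colouring of $G$ under which every spanning tree has discrepancy at most $1$. I would analyse a spanning tree $T$ via its ``smoothed tree'' --- the spanning tree of $K_m$ obtained by contracting every spike whose two incident edges both lie in $T$ (an \emph{internal} spike) into a virtual body edge. Leaf spikes contribute one edge each to $T$ while internal spikes contribute two; moreover, for each pair $(i,j)$, at most one of the $r-1$ spikes $w_{i,j,\cdot}$ can be internal (else the smoothed tree would contain a multi-edge). A naive colouring --- body edges in colour $r$ and both edges of $w_{i,j,c}$ in colour $c$ --- leaves the colour counts sensitive to how many internal spikes carry each colour, allowing spanning trees to push discrepancy up to order $m^2/r$. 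The plan is therefore to colour the two incident edges of each spike $w_{i,j,c}$ with \emph{distinct}, carefully coordinated colours (varying with $(i,j,c)$) so that the fluctuations introduced by leaf- and internal-spike contributions cancel across colours. A case analysis over each spike's state --- leaf on the $u_i$-side, leaf on the $u_j$-side, or internal --- combined with the structural constraints imposed by $T$ being a spanning tree then verifies that every colour class has size within one unit of $(n-1)/r$, completing the proof of $\Disc_r(G,\Tn)\le 1$.
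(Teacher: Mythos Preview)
Your separation argument overlooks the case where the body is split between a single part $V_1$ and $S$: with $k$ body vertices in $S$, the $(r-1)\binom{k}{2}$ spikes whose two body-neighbours both lie in $S$ are free to populate $V_2,\dots,V_r$, and balance is achieved already for $k\approx m\sqrt{(r-1)/r}<m$. So $s_r(G)\ge m$ is false as stated, though the weaker (and sufficient) bound $s_r(G)=\Omega_r(\sqrt n)$ does survive.

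The fatal problem is the discrepancy claim, which is not merely under-specified but false. Recall that $\Disc_r(G,\Tn)\le 1$ is \emph{equivalent} to the existence of an $r$-colouring with $|\mathcal C_j|\ge (r-1)n/r$ for every $j$, since one can always build a spanning tree with exactly $n-|\mathcal C_j|$ edges of colour $j$. Take $r=2$, so your graph has one spike per body pair and $n=m+\binom m2$. For any $2$-colouring, define a graph $A_j$ on the body by putting $u_iu_k\in E(A_j)$ iff the body edge $u_iu_k$ has colour $j$ or the spike $w_{i,k}$ has both incident edges coloured $j$. The body edge alone forces $A_1\cup A_2=K_m$, so (the complement of a disconnected graph being connected) at least one of $A_1,A_2$ is connected --- say $A_2$. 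Then all body vertices lie in a single colour-$2$ component and $|\mathcal C_2|=1+s_2$, where $s_2$ is the number of spikes with both edges coloured $1$. But each such spike contributes an $A_1$-edge; combining $|\mathcal C_1|=c(A_1)+s_1$, $s_1+s_2\le\binom m2$ and the requirement $|\mathcal C_1|+|\mathcal C_2|\ge n$ forces $c(A_1)\ge m-1$, i.e.\ $A_1$ has at most one edge, whence $s_2\le 1$ and $|\mathcal C_2|\le 2<n/2$. Thus $\Disc_2(G,\Tn)>1$ for every $m\ge 3$, and no ``carefully coordinated'' colouring can rescue the construction: the body clique is simply too connected. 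The paper sidesteps this by hanging its cliques off a \emph{cycle} (with delicately chosen clique sizes), so that colouring each clique monochromatically makes every colour class a disjoint union of $k$ cliques and many singletons, with exactly $(r-1)n/r$ components.
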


\Cref{thm:trees} allows us to determine the spanning-tree discrepancy for many graphs of interest.
In particular, it immediately implies all results of~\cite{BCJP20} concerning the discrepancy of spanning trees (up to constant factors) and generalises them to any number of colours.
Several new results can also be obtained.
Below we give a representative sample of such corollaries.

When applying \cref{thm:trees}, we need to be able to lower-bound the balanced $r$-separation number $s_r$ of the graphs in question. As it turns out, it will often be more convenient to lower-bound other graph parameters, which are in turn lower bounds for $s_r$. One such parameter is the following.
For a graph $G$, let $\iota(G)$ denote its \defn{vertex isoperimetric constant}, namely, the minimum of $|N(U)|/|U|$, taken over all sets $U\subseteq V(G)$ with $0<|U|\le |V(G)|/2$, where $N(U)$ denotes the \defn{external neighbourhood} of $U$ (namely, the set of vertices outside $U$ which have a neighbour in~$U$).
It is not hard to see that the balanced $r$-separation number of any graph is at least of the same order of magnitude as $n$ times its vertex isoperimetric constant.
Indeed, given a balanced $r$-separation $V_1\cup\ldots\cup V_r\cup S$ of $G$, the size of $V_1$ is $(n-|S|)/r$ and its neighbourhood is contained entirely in $S$, meaning that $r|S|/(n-|S|)\ge\iota(G)$ and hence $|S|=\Omega(\iota(G)\cdot n)$. Thus, \cref{thm:trees} has the following useful corollary:
\begin{corollary}[Isoperimetry]\label{cor:iso}
For every $r\ge 2$ there is $C = C(r)>0$ such that the following holds. Let $G$ be an $n$-vertex graph and suppose either that $G$ is $3$-connected, or that $G$ is $2$-connected and $\iota(G)\ge C n^{-1/2}$. Then $\Disc_r(G,\Tn)\ge \floor{\frac{\iota(G)\cdot n}{C}}$.
\end{corollary}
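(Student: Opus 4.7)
The plan is to derive the corollary as a direct consequence of \cref{thm:trees}, by showing that $s_r(G) \geq \iota(G)\cdot n/(r+1)$ for any graph $G$ and then tuning the constant.

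First I would establish the inequality $s_r(G) \geq \iota(G)\cdot n/(r+1)$. Let $V(G) = V_1 \cup \dots \cup V_r \cup S$ be a balanced $r$-separation realising $s_r(G)$, so that $|V_1| = \dots = |V_r| = (n-|S|)/r$ and there are no edges between different parts $V_i, V_j$. In particular the external neighbourhood $N(V_1)$ is contained in $S$. Since $r \geq 2$, we have $|V_1| \leq n/r \leq n/2$, so the vertex-isoperimetric inequality applies and yields $|S| \geq |N(V_1)| \geq \iota(G) |V_1| = \iota(G)(n-|S|)/r$. Rearranging, $|S|(r + \iota(G)) \geq \iota(G) n$, and using the trivial bound $\iota(G) \leq 1$ we obtain $s_r(G) = |S| \geq \iota(G) n/(r+1)$.

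Next I would feed this lower bound into \cref{thm:trees}. Let $C_0 = C_0(r)$ be the constant supplied by that theorem, and set $C := (r+1)C_0$. If $G$ is $3$-connected, then \cref{thm:trees} gives $\Disc_r(G,\T_n) \geq \lfloor s_r(G)/C_0 \rfloor \geq \lfloor \iota(G)\cdot n/C \rfloor$, as required. If instead $G$ is only $2$-connected, the hypothesis $\iota(G) \geq C n^{-1/2}$ together with the displayed inequality yields $s_r(G) \geq \iota(G)n/(r+1) \geq C_0 \sqrt{n}$, which triggers the second clause of \cref{thm:trees}, and the same computation delivers the desired bound.

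There is no real obstacle: the only conceptual step is the isoperimetric observation relating $s_r(G)$ and $\iota(G)$, which was essentially stated in the text immediately preceding the corollary. The remaining work is the (routine) choice of constants ensuring that the hypothesis $\iota(G) \geq Cn^{-1/2}$ is strong enough to activate the $\sqrt{n}$ threshold in \cref{thm:trees}.
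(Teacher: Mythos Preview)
Your proof is correct and follows essentially the same approach as the paper: the text immediately preceding the corollary derives $s_r(G) = \Omega(\iota(G)\cdot n)$ from the isoperimetric inequality (via the same observation that $N(V_1)\subseteq S$), and then invokes \cref{thm:trees}. The only cosmetic quibble is that the ``trivial bound'' $\iota(G)\le 1$ can fail by a hair when $n$ is odd, but since only a constant depending on $r$ is at stake this is immaterial.
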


Before proceeding to applications of \cref{cor:iso}, let us quickly mention another
corollary of a similar flavour, stating that highly-connected graphs have large spanning-tree discrepancy. Denote by $\kappa(G)$ the vertex connectivity of $G$.

\begin{corollary}[Discrepancy vs.\ vertex-connectivity]\label{cor:conn}
  For every $r\ge 2$ there is ${C= C(r)> 0}$ such that for every connected $n$-vertex graph $G$ it holds that
  $\Disc_r(G,\Tn)\ge \floor{\kappa(G)/C}$.
\end{corollary}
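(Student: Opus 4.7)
The plan is to derive \cref{cor:conn} directly from \cref{thm:trees} by establishing the elementary inequality $s_r(G) \ge \kappa(G)$, valid for every connected graph $G$ and every $r \ge 2$. Given this bound, the corollary reduces to a short case analysis on $\kappa(G)$ together with a suitable choice of the constant.

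To prove $s_r(G) \ge \kappa(G)$, I would take an optimal balanced $r$-separation $V(G) = V_1 \cup \dots \cup V_r \cup S$ realising $|S| = s_r(G)$. Since the parts $V_i$ all have equal cardinality, they are either all empty or all nonempty. In the first case $|S| = n \ge \kappa(G)$, since trivially $\kappa(G) \le n-1$. In the second case, $G - S$ contains at least two nonempty pieces $V_1, V_2$ with no edges between them by definition of a balanced $r$-separation, so $G - S$ is disconnected; therefore $S$ is a vertex cut, giving $|S| \ge \kappa(G)$.

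With this inequality in hand, let $C_0 = C_0(r)$ be the constant provided by \cref{thm:trees} and set $C := \max(C_0, 3)$. If $\kappa(G) \ge 3$, then $G$ is $3$-connected and the first bullet of \cref{thm:trees} applies, yielding
\[
\Disc_r(G, \Tn) \ge \floor{s_r(G)/C_0} \ge \floor{\kappa(G)/C_0} \ge \floor{\kappa(G)/C}.
\]
If instead $\kappa(G) \le 2$, then $\floor{\kappa(G)/C} = 0$ and the bound holds trivially. There is no genuine obstacle here, as all the substantive work has already been absorbed into \cref{thm:trees}; the only mildly subtle point is remembering to treat the degenerate balanced $r$-separation (with all $V_i$ empty) when lower-bounding $s_r(G)$, so that the reduction goes through for every connected graph, not merely those admitting a nontrivial separation.
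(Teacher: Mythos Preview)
Your proof is correct and follows the same approach as the paper, which simply notes that \cref{cor:conn} ``follows immediately from \cref{thm:trees}, since $s_r(G) \geq \kappa(G)$ for every graph $G$.'' You have just spelled out the details (the proof of the inequality and the case split on $\kappa(G)$) that the paper leaves implicit.
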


\Cref{cor:conn} follows immediately from \cref{thm:trees}, since $s_r(G) \geq \kappa(G)$ for every graph~$G$.
Similarly, \cref{thm:trees} implies that if $G$ is $3$-connected and $\delta(G) \geq n/r + k$ for some $k > 0$, then $\Disc_r(G,\Tn)\ge \lfloor k/C \rfloor$, since for every graph $G$ it holds that $\delta(G) \leq \frac{n-s_r(G)}{r} - 1 + s_r(G) \le \frac{n}{r} + s_r(G)$. In the case $r = 2$, one does not need the $3$-connectivity assumption, because every graph $G$ with $\delta(G)\ge n/2+k$ is $(2k+1)$-connected.

To see that \cref{cor:conn} is tight, consider the graph on the vertex set $V$ with the balanced $r$-separation $V=V_1\cup\ldots\cup V_r\cup S$ having $|S|=k\le n/(r+1)$, and endow the graph with all possible edges except for those connecting $V_i$ to $V_j$ for $i\ne j$.
This graph is clearly $k$-connected, but according to \eqref{eq:ub}, its spanning-tree discrepancy is at most $O(k)$.

To demonstrate the usefulness of \cref{cor:iso}, let us apply it to estimate the spanning-tree discrepancy of random regular graphs.
For an integer $d\ge 3$, let $\cG_{n,d}$ denote the uniform distribution over the set of all $d$-regular graphs on $n$ vertices (assuming $dn$ is even).
Balogh et al.~\cite{BCJP20}*{Theorem~3} have shown that  $\Disc_2(\cG_{n,d},\Tn)=\Theta(n)$ \whp.
Here we immediately obtain an extension of this result to any number of colours.
\begin{corollary}[Random regular graphs]\label{cor:rrg}
	Let $G\in\cG_{n,d}$, $d\ge 3$. Then $\Disc_r(G,\Tn)=\Theta(n)$ \whp{}.
\end{corollary}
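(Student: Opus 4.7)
The plan is to combine the general lower bound given by \cref{cor:iso} with two classical structural properties of the random $d$-regular graph: high vertex-connectivity and constant vertex-expansion.

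First I would dispense with the upper bound, which requires nothing about randomness. Since trivially $s_r(G)\le n$, inequality \eqref{eq:ub} gives $\Disc_r(G,\Tn)\le (r-1)(n-1)=O(n)$ deterministically.

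For the matching lower bound, I would invoke two well-known facts about $\cG_{n,d}$ with $d\ge 3$: (i) $G\sim\cG_{n,d}$ is $d$-connected \whp{} (due to Bollob\'as/Wormald), so in particular $3$-connected; and (ii) there exists a constant $c=c(d)>0$ such that \whp{} $\iota(G)\ge c$ (this is the standard first-moment computation showing that in the configuration model, \whp{} no set $U$ with $|U|\le n/2$ has $|N(U)|<c|U|$, provided $c$ is chosen small enough in terms of $d$). Given these two facts, $G$ satisfies the hypotheses of \cref{cor:iso} (indeed, it satisfies the stronger $3$-connectivity hypothesis, so the quantitative condition $\iota(G)\ge Cn^{-1/2}$ is not even needed). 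Applying \cref{cor:iso} then yields
\[
\Disc_r(G,\Tn)\ge \floor{\frac{\iota(G)\cdot n}{C(r)}}\ge \floor{\frac{c(d)\cdot n}{C(r)}}=\Omega(n)
\]
\whp{}, completing the proof.

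There is no real obstacle here: the entire content of the corollary has been packaged into \cref{cor:iso}, and all that remains is to quote two classical results about random regular graphs. The only point where some care is needed is the choice of reference for the vertex-expansion of $\cG_{n,d}$; either a direct configuration-model calculation or a citation to a standard text (e.g.\ on expander properties of random regular graphs) suffices.
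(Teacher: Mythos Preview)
Your proposal is correct and matches the paper's own derivation essentially verbatim: the paper also obtains the lower bound by quoting that $G\sim\cG_{n,d}$ is \whp{} $3$-connected and satisfies $\iota(G)\ge\eta$ for some constant $\eta=\eta(d)>0$, then applying \cref{cor:iso}. Your explicit treatment of the upper bound via $s_r(G)\le n$ and \eqref{eq:ub} is fine (the paper leaves it implicit, as it is trivial).
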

\Cref{cor:rrg} follows from \cref{cor:iso} by recalling that \whp{}, $G\sim \cG_{n,d}$ is $3$-connected (see, e.g.,~\cite{FK}) and satisfies $\iota(G)\ge\eta$ for some suitable $\eta>0$ (see~\cite{Bol88}).

For large $r$ we can go a step further, determining the asymptotics, as a function of $r$, of the multiplicative constant appearing in \cref{cor:rrg}. This is stated in the following theorem:
\begin{theorem}\label{prop:rrd_tight}
Let $G\in\cG_{n,d}$, $d\ge 3$. Then $\Disc_r(G,\Tn)=
\left( \frac{d}{2} - 1 - o_r(1) \right)n$ \whp{}. In other words, \whp{}
in every $r$-colouring of $E(G)$ there is a spanning tree with at least $\left( \frac{d}{2r} - o\left( \frac{1}{r} \right) \right) n$ edges of the same colour, and the constant $\frac{d}{2r}$ is tight.
\end{theorem}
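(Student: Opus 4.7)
Any $r$-colouring $f$ of $E(G)$ determines colour subgraphs $H_i$ and a quantity $m_i := n - c(H_i)$, where $c(H_i)$ counts the connected components of $H_i$ on $V(G)$ (so isolated vertices contribute). Since the rank of $H_i$ in the graphic matroid $M(G)$ equals $m_i$ and any maximum spanning forest of $H_i$ extends to a spanning tree of $G$, the maximum number of colour-$i$ edges in a spanning tree is exactly $m_i$, and therefore
\[
  \Disc_r(G,\Tn, f) \;=\; r\max_{i \in [r]} m_i - (n-1).
\]
The task reduces to proving $\min_f \max_i m_i = \frac{dn}{2r}(1 \pm o_r(1))$ whp.

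For the upper bound it suffices to consider $r \geq d+1$. By Vizing's theorem $E(G)$ admits a partition into at most $d+1$ matchings, and by further subdividing we obtain a partition $E(G) = E_1 \cup \dots \cup E_r$ into matchings, each of size at most $\lceil dn/(2r)\rceil$. Since $m_i = e_i$ for a matching $H_i$, we have $\max_i m_i \leq \lceil dn/(2r)\rceil$ and $\Disc_r(G, \Tn) \leq r\lceil dn/(2r)\rceil - (n-1) = (d/2 - 1)n + O(r)$, which yields $(d/2 - 1 + o_r(1))n$ as $n\to\infty$.

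The lower bound rests on a \emph{cycle-rank bound for sparse subgraphs}: whp for $G \sim \cG_{n,d}$, there exist constants $\alpha, C > 0$ (depending only on $d$) such that every subgraph $H \subseteq G$ whose non-isolated vertex set has size at most $\alpha n$ satisfies
\[
  \mathrm{cyc}(H) := |E(H)| - |V(H)| + c(H) \;\leq\; \frac{C d \cdot |E(H)|^2}{n}.
\]
Informally, sparse subgraphs of $\cG_{n,d}$ are almost forests; this can be extracted from the expander mixing lemma (which bounds $e(G[V'])$ by $dv^2/(2n) + O(\sqrt{d}\,v)$), sharpened by the fact that short cycles in $\cG_{n,d}$ are asymptotically Poisson with $O_d(1)$ mean. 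Granting this, fix $\epsilon > 0$ and set $T := \epsilon n / d^2$. If every $e_i \leq T$ (\emph{balanced} case), then $|V(H_i)| \leq 2e_i \leq 2T \leq \alpha n$, so the cycle-rank bound applies uniformly and yields
\[
  \sum_i \mathrm{cyc}(H_i) \;\leq\; \frac{Cd}{n} \bigl(\max_i e_i\bigr)\sum_i e_i \;\leq\; \frac{Cd}{n}\cdot T\cdot \frac{dn}{2} \;=\; O(\epsilon n);
\]
hence $\sum_i m_i = dn/2 - \sum_i \mathrm{cyc}(H_i) \geq (d/2 - O(\epsilon))n$, and averaging gives $\max_i m_i \geq dn/(2r) - O(\epsilon n/r)$. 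Otherwise some $e_{i^*} > T$ (\emph{unbalanced} case); applying Vizing's theorem to $H_{i^*}$ extracts a matching of size $\geq e_{i^*}/(d+1) > T/(d+1)$, so $m_{i^*} \geq \Omega(\epsilon n/d^3)$, which for $r \geq \Omega(d^4/\epsilon)$ already gives $r m_{i^*} \geq (d/2)n$. In both cases $\Disc_r(G, \Tn, f) \geq (d/2 - 1 - O(\epsilon))n + O(1)$, and letting $\epsilon \to 0$ as $r \to \infty$ produces the claim.

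The main obstacle is establishing the cycle-rank bound uniformly over all sparse subgraphs $H \subseteq G$. While the expected number of edges inside a small subset $V' \subseteq V(G)$ is only $dv^2/(2n)$, converting this into a uniform bound strong enough to also control $\mathrm{cyc}(H)$ requires careful short-cycle counting in $\cG_{n,d}$ and leveraging its locally tree-like structure; this is ultimately the source of the $o_r(1)$ error term.
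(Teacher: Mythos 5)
Your reduction to $\min_f \max_i m_i$ with $m_i = n - c(H_i)$ is correct and matches the paper's starting point, and the upper bound is fine (though the Vizing step is superfluous: any equitable partition of $E(G)$ gives $m_i \le e_i \le \lceil dn/(2r)\rceil$). The problem is the lower bound: the ``cycle-rank bound for sparse subgraphs'' on which everything rests is false as stated. Whp $\cG_{n,d}$ contains a cycle of length $j = O(\log\log n)$; taking $H$ to be that single cycle gives $\mathrm{cyc}(H) = 1$ while your bound demands $1 \le Cd\,j^2/n \to 0$. The failure is not confined to tiny subgraphs: whp there are $\Theta(\sqrt{n}/\log n)$ cycles of length at most $\tfrac12\log_{d-1} n$, and their union is a subgraph with $O(\sqrt{n})$ edges and cycle rank $\Theta(\sqrt{n}/\log n)$, versus a claimed bound of $O(d)$. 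Moreover, the suggested derivation cannot be repaired to give what you need: for $|V'| = v \le \alpha n$ the expander mixing lemma only yields $e(G[V']) \le dv^2/(2n) + O(\sqrt{d})\,v$, i.e.\ an excess of order $\sqrt{d}\cdot v$ over $v$, whereas the argument requires the much stronger $e(G[V']) \le (1+o(1))\,v$. The correct form of the statement carries an additive error counting short cycles, namely $\mathrm{cyc}(H) \le \varepsilon|E(H)| + o(n)$ for all $H$ with $|E(H)| \le \beta n$, and proving it requires a first-moment/union-bound computation over edge sets (the paper's \cref{lem:rrg_edge-set_appearance,lem:rrg_small_subgraphs_sparse}, showing every small edge set spans at least $(1-\varepsilon)|E_0|$ vertices) combined with the $o(n)$ bound on short cycles (\cref{lem:rrg_short_cycles}); this is precisely the ingredient your proposal defers to ``careful short-cycle counting'' without supplying it.

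With the corrected lemma your skeleton would close, but it is also more complicated than necessary. The paper avoids your balanced/unbalanced dichotomy entirely: by averaging, some colour has at least $dn/(2r)$ edges; for $r$ large this is below the threshold $\beta n$, so \cref{lem:rrg_almost_forest} extracts a forest of $(1-\tfrac{2\varepsilon}{d})\cdot\tfrac{dn}{2r}$ edges of that colour, which extends to a spanning tree. In particular the sum $\sum_i \mathrm{cyc}(H_i)$ never needs to be controlled across all colour classes simultaneously (where the $o(n)$ additive errors and the colour classes of non-uniform size would need extra care), only for the single majority colour.
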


Results similar to \cref{cor:rrg} can be obtained for regular {\em expander graphs}.
Let $G$ be a $d$-regular $n$-vertex graph, and let $\lambda=\lambda(G)$ be the second largest eigenvalue of its adjacency matrix.
It is widely known that a small ratio $\lambda/d$ implies good expansion properties (for a survey we refer the reader to~\cite{HLW06}).
In particular, it follows from~\cite{AM85}*{Lemma 2.1} (see also~\cite{AS}*{Theorem~9.2.1}) that if $d$ is constant and $\lambda\le d-2$ then $G$ is $2$-connected and $\iota(G)\ge 1/d$.
This, together with \cref{cor:iso}, implies the following.
\begin{corollary}[Regular expanders]\label{cor:expanders}
  Let $G$ be a $d$-regular $n$-vertex graph with $d\ge 3$ and $\lambda\le d-2$.
  Then $\Disc_r(G,\Tn)= \Theta_d(n)$.
\end{corollary}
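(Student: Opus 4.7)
The proof will be an almost immediate consequence of \cref{cor:iso}, so the bulk of my plan is to verify that the two hypotheses of that corollary (the connectivity assumption and a lower bound on the vertex isoperimetric constant) are satisfied under the spectral condition $\lambda \leq d-2$, and to record the matching upper bound.

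First, for the \emph{upper bound}, I would simply observe that any spanning tree of $G$ has exactly $n-1$ edges, hence the size of a maximum colour class is trivially at most $n-1$; substituting into the definition of $D_f$ gives
\[
\Disc_r(G,\Tn) \leq r\left( (n-1) - \tfrac{n-1}{r} \right) = (r-1)(n-1) = O_{d,r}(n),
\]
where I am treating $r$ as a fixed constant in the $\Theta_d$-notation (as is implicit throughout the paper). Alternatively one may quote \eqref{eq:ub} together with the trivial bound $s_r(G) \leq n$.

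For the \emph{lower bound}, the plan is to invoke the Alon--Milman-type spectral result stated in the paragraph preceding the corollary: since $d \geq 3$ is a constant and $\lambda \leq d-2$, the graph $G$ is $2$-connected and satisfies $\iota(G) \geq 1/d$. The $2$-connectivity handles the connectivity hypothesis of the second bullet of \cref{thm:trees} (hence of \cref{cor:iso}). Moreover, for $n$ sufficiently large (depending only on $d$ and the constant $C = C(r)$ from \cref{cor:iso}), one has $\iota(G) \geq 1/d \geq C n^{-1/2}$, so the isoperimetric hypothesis of \cref{cor:iso} is also satisfied.

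Applying \cref{cor:iso} therefore yields
\[
\Disc_r(G,\Tn) \;\geq\; \left\lfloor \frac{\iota(G)\cdot n}{C} \right\rfloor \;\geq\; \left\lfloor \frac{n}{Cd} \right\rfloor \;=\; \Omega_d(n),
\]
which combined with the upper bound above gives $\Disc_r(G,\Tn) = \Theta_d(n)$. The only step that is not entirely mechanical is confirming the two graph-theoretic consequences of $\lambda \leq d-2$ (namely $2$-connectivity and $\iota(G) \geq 1/d$), but these are standard and explicitly attributed in the paper to~\cite{AM85}*{Lemma~2.1} and~\cite{AS}*{Theorem~9.2.1}; there is no genuine obstacle, and a handful of small cases (finitely many $n$ for each $d$) are absorbed into the hidden constants.
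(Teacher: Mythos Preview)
Your proposal is correct and follows essentially the same route as the paper: the paper derives the corollary directly from \cref{cor:iso}, using the Alon--Milman-type spectral fact (attributed to~\cite{AM85}*{Lemma~2.1} and~\cite{AS}*{Theorem~9.2.1}) that $\lambda\le d-2$ forces $G$ to be $2$-connected with $\iota(G)\ge 1/d$. Your write-up is slightly more explicit about the trivial upper bound and about absorbing finitely many small $n$ into the constants, but there is no substantive difference.
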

We remark that a similar statement holds for $d\gg 1$ as well by strengthening the assumption to $\lambda\le(1-\eps)d$ for some $\eps>0$.

As our next application, we determine the spanning-tree discrepancy of the $d$-dimensional hypercube, denoted here by $Q_d$.
\begin{corollary}[The hypercube]\label{cor:hypercube}
  $\Disc_r(Q_d,\Tn) = \Theta( n/\sqrt{\log{n}} )$ where $n=2^d$.
\end{corollary}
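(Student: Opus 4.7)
The plan is to obtain matching bounds by combining the general results of the section with the classical isoperimetric theory of the hypercube. Throughout, set $d=\log_2 n$, so that $\sqrt d=\Theta(\sqrt{\log n})$.

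For the lower bound, I would verify the hypothesis of \cref{cor:iso} and read off the answer. The hypercube $Q_d$ is $d$-regular and $d$-vertex-connected, hence $3$-connected for $d\ge 3$, so the first alternative of \cref{cor:iso} applies. By Harper's vertex-isoperimetric theorem for $Q_d$, the minimum of $|N(U)|/|U|$ over nonempty $U\subseteq V(Q_d)$ with $|U|\le n/2$ is attained (up to constants) by taking $U$ to be a Hamming half-cube of size $n/2$, in which case $N(U)$ is the middle layer $L_{\lfloor d/2\rfloor}$ of size $\binom{d}{\lfloor d/2\rfloor}=\Theta(n/\sqrt d)$. Thus $\iota(Q_d)=\Theta(1/\sqrt d)=\Theta(1/\sqrt{\log n})$, and \cref{cor:iso} gives $\Disc_r(Q_d,\Tn)\ge \lfloor \iota(Q_d)\cdot n/C\rfloor=\Omega(n/\sqrt{\log n})$.

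For the upper bound, I would exhibit a balanced $r$-separator of size $O(n/\sqrt{\log n})$ and then invoke \eqref{eq:ub}. Partition $V(Q_d)$ into layers $L_0,L_1,\dots,L_d$ by Hamming weight, and choose integer thresholds $0=t_0<t_1<\dots<t_{r-1}<t_r=d+1$ greedily so that for every $i\in[r]$ the block $B_i:=\bigcup_{k=t_{i-1}}^{t_i-1}L_k$ has size between $\lfloor n/r\rfloor$ and $\lfloor n/r\rfloor+\max_k|L_k|$. Let $S:=L_{t_1}\cup\dots\cup L_{t_{r-1}}$ together with a set of at most $(r-1)\max_k|L_k|$ additional vertices, peeled off from the largest blocks until all of $B_1\setminus S,\dots,B_r\setminus S$ have exactly the same size. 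Since edges of $Q_d$ only join consecutive layers, there are no edges between the $B_i\setminus S$, so this is a balanced $r$-separation. Because the thresholds $t_i$ lie within $O(\sqrt d)$ of $d/2$ (by a binomial tail/CLT bound, as otherwise a single block would already exceed $n/r$), each $\binom{d}{t_i}=O(n/\sqrt d)$, so $|S|=O(r\cdot n/\sqrt d)=O(n/\sqrt{\log n})$. Combined with \eqref{eq:ub}, this gives $\Disc_r(Q_d,\Tn)\le (r-1)(s_r(Q_d)-1)=O(n/\sqrt{\log n})$, completing the proof.

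The main obstacle, such as it is, lies entirely in the upper-bound construction: one has to simultaneously make the separating layers \emph{small} and the resulting blocks \emph{exactly} equal in size. Both are comfortably achievable because all candidate thresholds cluster in a window of width $O(\sqrt d)$ around $d/2$, where every layer has the same order of magnitude $\Theta(n/\sqrt d)$; hence rounding losses and parity corrections are absorbed without inflating $|S|$ beyond $O(n/\sqrt{\log n})$. The lower bound itself is essentially a one-line consequence of \cref{cor:iso} once Harper's theorem is invoked.
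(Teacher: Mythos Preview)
Your argument is correct and follows essentially the same route as the paper. For the lower bound, the paper also goes through Harper's theorem, though it packages the isoperimetric input as a separate lemma (for sets of linear size) and then appeals to the $2$-connected case of \cref{thm:trees}; your use of $d$-connectedness and \cref{cor:iso} is a marginally cleaner shortcut. For the upper bound, the paper abstracts your greedy-threshold construction into \cref{obs:ub} (a layered/bandwidth observation), which spares the extra step of arguing that the thresholds land within $O(\sqrt d)$ of $d/2$: since every layer satisfies $|L_k|\le\binom{d}{\lfloor d/2\rfloor}=O(n/\sqrt d)$, the location of the thresholds is irrelevant and you can bound $|S|$ by $O(r)\cdot\max_k|L_k|$ directly.
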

The derivation of the lower bound in \cref{cor:hypercube} from \cref{thm:trees} follows the same lines as the derivation of the preceding corollaries.
Naturally, we require estimates for the vertex isoperimetric constant of the hypercube. Such estimates are indeed available~\cites{Har66}.
The details are given in \cref{sec:strees:app}.

For our final application, we let $P_k^d$ denote the $d$-dimensional grid on $k^d$ vertices (${d \ge 2}$).
Balogh et al.~\cite{BCJP20}*{Theorem~1.5} have shown that $\Disc_2(P_k^2,\Tn) =\Theta( \sqrt{n})$.
Here we obtain a generalisation of this result to every $d \geq 2$ and every number of colours.
\begin{corollary}[$d$-dimensional grids]\label{cor:grid}
  $\Disc_r(P_k^d,\Tn)=\Theta( k^{d-1} )$ where $n=k^d$, $d\ge 2$.
\end{corollary}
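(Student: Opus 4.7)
The plan is to prove the upper bound by exhibiting an explicit balanced $r$-separator of size $O(k^{d-1})$ and plugging into \eqref{eq:ub}, and to prove a matching lower bound by combining the classical vertex-isoperimetric inequality for grids with \cref{thm:trees}.

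For the upper bound, set $S=\bigcup_{i=1}^{r-1}\{x\in[k]^d : x_1=\lfloor ik/r\rfloor\}$, a union of $r-1$ axis-aligned hyperplanes of total size $(r-1)k^{d-1}$. Removing $S$ splits $P_k^d$ into $r$ parallel slabs of near-equal size; by shifting $O(k^{d-1})$ further vertices into $S$ we can equalise the sizes exactly. Thus $s_r(P_k^d)=O(k^{d-1})$, and \eqref{eq:ub} yields $\Disc_r(P_k^d,\Tn)=O(k^{d-1})$.

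For the lower bound, the standard vertex-isoperimetric inequality in the grid (Bollob\'as--Leader and others) gives
\[
|N(U)|\ge c_d\cdot\min\!\left(|U|^{(d-1)/d},\,k^{d-1}\right)
\]
whenever $|U|\le k^d/2$. For any balanced $r$-separation $V(P_k^d)=V_1\cup\dots\cup V_r\cup S$ we have $N(V_1)\subseteq S$; since $|V_1|=(n-|S|)/r$, we may assume $|S|\le n/2$ (else $|S|\ge k^{d-1}$ trivially), and then $|V_1|=\Theta(n)$, giving $|S|\ge|N(V_1)|=\Omega_{d,r}(k^{d-1})$. Hence $s_r(P_k^d)=\Omega(k^{d-1})$.

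It remains to verify the connectivity hypothesis of \cref{thm:trees}. For $d\ge 3$ the grid is $d$-connected (by a standard path-routing argument), hence $3$-connected, and the first bullet of \cref{thm:trees} gives $\Disc_r(P_k^d,\Tn)\ge\lfloor s_r(P_k^d)/C\rfloor=\Omega(k^{d-1})$. For $d=2$ the grid is only $2$-connected and we need $s_r(P_k^2)\ge C(r)\sqrt{n}=C(r)k$; the isoperimetric argument above yields $s_r(P_k^2)\ge c(r)\cdot k$, so the main point to verify is that the constants align. The hardest part of the proof is therefore controlling constants in the planar case; this can be handled either by tightening the grid isoperimetric estimate or by a tailored direct separator argument, and in any case the claim is vacuous for the finitely many small $k$ where the asymptotic bounds fail to kick in. Applying the second bullet of \cref{thm:trees} then finishes the proof.
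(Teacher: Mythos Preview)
Your upper bound and the $d\ge 3$ lower bound are essentially the paper's argument (the paper packages the hyperplane-slicing as \cref{obs:ub} and then invokes \cref{cor:iso} via the Bollob\'as--Leader isoperimetry, but the content is the same).

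The $d=2$ case, however, has a real gap. You want to apply the second bullet of \cref{thm:trees}, which requires $s_r(P_k^2)\ge C(r)\sqrt{n}=C(r)k$, where $C(r)$ is the \emph{specific} constant produced by that theorem. Isoperimetry only gives $s_r(P_k^2)\ge c(r)\,k$ for some \emph{other} constant $c(r)$, and there is no reason these should satisfy $c(r)\ge C(r)$; in fact $s_r(P_k^2)/k$ is bounded above by $r-1$ (remove $r-1$ columns), while $C(r)$ comes out of the proof of \cref{thm:trees} with no such control. Crucially, this is not a small-$k$ issue: both constants are independent of $k$, so if $c(r)<C(r)$ the hypothesis fails for \emph{every} $k$, and neither ``tightening the isoperimetric estimate'' nor discarding finitely many $k$ helps.

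The paper sidesteps this entirely: it adds a $4$-cycle through the four corner vertices of $P_k^2$ to obtain a $3$-connected graph $P_k^+$, applies the \emph{first} bullet of \cref{thm:trees} (via \cref{cor:iso}) to $P_k^+$, and then observes that any spanning tree of $P_k^+$ uses at most three of the four added edges, so $\Disc_r(P_k^2,\Tn)\ge\Disc_r(P_k^+,\Tn)-3r$. This avoids the constant-matching problem altogether.
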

Again, the proof is achieved by combining \cref{cor:iso} with a suitable isoperimetric inequality. Such an inequality for the grid was given in~\cite{BL91v}. The details appear in \cref{sec:strees:app}.
In fact, our methods yield similar results for a much wider family of ``grid-like" graphs, such as tori (of dimension $d\ge 2$), hexagonal and triangular lattices, etc.

~

It is natural to ask about the spanning-tree discrepancy of the complete graph $K_n$. Since discrepancy is monotone with respect to adding edges, this is also the maximum possible spanning-tree discrepancy that an $n$-vertex graph can have.
As it turns out, the $r$-colour spanning-tree discrepancy of $K_n$ is closely related to a certain parameter $\varphi(r)$, defined in terms of covering the edges of a complete graph by smaller complete graphs.
The definition of $\varphi(r)$ is as follows.

Let $\phi(r,n)$ denote the smallest integer $k$ such that there is a covering of the edges of $K_n$ with $r$ cliques of size $k$.
In other words, $\phi(r,n)$ is the smallest integer $k$ for which there is a collection of $k$-sets $A_1,\dots,A_r \in \binom{[n]}{k}$ such that every $e \in \binom{[n]}{2}$ is contained in $A_i$ for some $i\in[r]$.%
\footnote{We remark that this definition can be extended to arbitrary graphs, and that the resulting parameter is related to $s_r(G)$.
Indeed, let $\phi(r,G)$ be the smallest integer $k$ such that there exist $A_1,\ldots,A_r\in\binom{V(G)}{k}$ for which for every $e\in E(G)$ there is $j\in[r]$ with $e\subseteq A_j$.
It is not hard to verify that $s_r(G) = \Theta_r(\phi(r,G)-n/r)$.}
This parameter has been studied by Mills~\cite{Mil79} and by Hor\'{a}k and Sauer~\cite{HS92}. In these works it was shown that the limit $\phi(r):=\lim_{n\to\infty} \phi(r,n)/n$ exists and is equal to
$\min_{n} \phi(r,n)/n$ (this minimum is attained for infinitely many $n$). In particular, $\varphi(r,n) \geq \varphi(r) \cdot n$ for every $n$.
The value of $\varphi(r)$ for small $r$ was determined in \cites{HS92,Mil79};
for example, $\varphi(2) = 1$, $\varphi(3) = \frac{2}{3}$, $\varphi(4) = \frac{3}{5}$, $\varphi(5) = \frac{5}{9}$, $\varphi(6) = \frac{1}{2}$ and $\varphi(7) = \frac{3}{7}$
(for the values of $\varphi(r)$ for $r \leq 13$, see~\cite{Mil79}).
A trivial counting argument shows that $\varphi(r) \geq \frac{1}{\sqrt{r}}$;
indeed, if $A_1,\dots,A_r \in \binom{[n]}{k}$ cover all pairs in $\binom{[n]}{2}$, then $r \binom{k}{2} \geq \binom{n}{2}$, which gives $k \geq \frac{n}{\sqrt{r}} - o(n)$.
On the other hand, if $r = p^2 + p + 1$ and a projective plane of order $p$ exists, then the value of $\varphi(r)$ is known exactly: $\varphi(r) = \frac{p+1}{p^2+p+1}$
(see~\cite{Fur88}*{Section~7} and the references therein).
The construction of $A_1,\dots,A_r$ as above is obtained by blowing up a projective plane.
By using this result together with known facts about the existence of projective planes,
one can show that
$\varphi(r) = \frac{1 + o_r(1)}{\sqrt{r}}$ for every $r$.

Observe that for every $n$-vertex graph $G$, one can $r$-colour the edges of $G$ so that no spanning tree of $G$ has more than $\varphi(r,n) - 1$ edges of the same colour. Indeed, setting $k = \varphi(r,n)$, take $A_1,\dots,A_r \in \binom{[n]}{k}$ as in the definition of $\varphi(r,n)$, and colour an edge $e \in E(G)$ with colour $i \in [r]$ if $e \subseteq A_i$; such an $i$ always exists because $A_1,\dots,A_r$ cover all pairs in $\binom{[n]}{2}$.
Observe that in this colouring, every edge of colour~$i$ is contained in $A_i$, meaning that any spanning tree of $G$ contains at most $|A_i| - 1 = \varphi(r,n) - 1$ edges of colour~$i$, as required.

In the other direction, one can show that in any $r$-colouring of the edges of $K_n$, there is a spanning tree with at least
$(\varphi(r) - o(1))n$ edges of the same colour (we shall prove this as part of a more general result). The construction in the previous paragraph
shows that the constant $\varphi(r)$ is optimal. It would be interesting to obtain an exact result.
One might wonder whether the upper bound $\varphi(r,n) - 1$ is tight, namely, whether it is true that in every $r$-colouring of $E(K_n)$ there is a spanning tree with $\varphi(r,n) - 1$ edges of the same colour.

As our next theorem, we show that the spanning-tree discrepancy of graphs with certain expansion properties is essentially as high as that of $K_n$. In other words, we show that the optimal bound $(\varphi(r) - o(1))n$ holds for these graphs as well.
The precise notion of expansion is as follows:
say that a graph $G = (V,E)$ is a \defn{$\beta$-graph} (for a given
$\beta>0$) if $|V| \geq 1/\beta$ and there is an edge in $G$ between every pair of disjoint sets
$U,W\subseteq V$ with $|U|,|W|\ge\beta |V|$ (cf.~\cite{FK21}).
One remarkable class of such expanders is $(n,d,\lambda)$-graphs, where $\lambda/d\le\beta^2$ (for an overview of $(n,d,\lambda)$-graphs, we refer the reader to \cite{KS06}).
Thus, the following theorem gives an example of sparse (i.e., having linearly many edges) graphs with nearly-optimal spanning-tree discrepancy.
We note that a $\beta$-graph $G$ need not be connected, as for example it may have up to $\beta |V(G)| - 1$ isolated vertices. Hence, when studying the spanning-tree discrepancy of $\beta$-graphs, we need to explicitly assume that the graphs in question are connected.

\begin{theorem}[$\beta$-graphs]
  \label{thm:beta}
  For every $r \geq 2$ and $\eps > 0$ there is $\beta = \beta(r,\eps) > 0$ such that every connected $n$-vertex $\beta$-graph $G$ satisfies the following: in any $r$-colouring of $E(G)$ there is a spanning tree with at least $(\varphi(r) - \eps) \cdot n$ edges of the same colour.
\end{theorem}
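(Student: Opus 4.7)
The plan is to recast the problem, handle the complete-graph case first, and then transfer the argument to $\beta$-graphs using their pseudo-random property.

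\textbf{Reformulation.} Producing a spanning tree of $G$ with $(\varphi(r)-\varepsilon)n$ edges of the same colour is equivalent to finding a colour $i$ such that the colour-$i$ subgraph $G_i$ has at most $(1-\varphi(r)+\varepsilon)n$ connected components on $V(G)$ (counting isolated vertices). Indeed, a maximum spanning forest of $G_i$ contains exactly $n-c(G_i)$ edges, all of colour $i$, and since $G$ is connected this forest extends greedily to a spanning tree of $G$ without removing any colour-$i$ edge.

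\textbf{Step~1: the complete graph.} I first establish the statement for $G=K_n$: every $r$-colouring of $E(K_n)$ admits a colour $i$ with $c(G_i)\leq (1-\varphi(r)+\varepsilon)n$. The component partitions $Q_1,\ldots,Q_r$ of $V$ have the property that every pair $\{u,v\}\in\binom{V}{2}$ lies in a common part of $Q_i$ for $i=f(uv)$, so the parts of $Q_1,\ldots,Q_r$ together cover $\binom{V}{2}$. Suppose for contradiction that every $|Q_i|>(1-\varphi(r)+\varepsilon)n$; then the largest part of each $Q_i$ has size at most $(\varphi(r)-\varepsilon)n$. The heart of the argument is to extract from the $r$ partitions a single set $A_i\in Q_i$ per partition so that $A_1,\ldots,A_r$ together cover all of $\binom{V}{2}$ (possibly after padding each $A_i$ by $o_\varepsilon(n)$ arbitrary vertices so that $|A_i|<\varphi(r,n)$). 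This would contradict the definition of $\varphi(r,n)$ as the minimum $k$ such that $K_n$ is covered by $r$ sets of size $k$. I would carry out the extraction via a weighted greedy selection on the parts of each $Q_i$, prioritising the largest parts and showing that the pairs missed by the top parts form an $o(n^2)$-sized slack that the padding absorbs.

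\textbf{Step~2: transfer to $\beta$-graphs.} When $G$ is a $\beta$-graph, the parts of the $Q_i$'s cover only the pairs of $E(G)$, not all of $\binom{V}{2}$. The $\beta$-graph property provides a robust surrogate: for any disjoint $U,W\subseteq V$ with $|U|,|W|\geq\beta n$ there is a $G$-edge between them of some colour $j$, which places at least one vertex of $U$ and one of $W$ into a common part of $Q_j$. In particular, the set of ``uncovered'' pairs (those not inside any common $Q_i$-part) cannot contain two disjoint vertex classes of size $\geq\beta n$ each, which via a direct $\beta$-graph bound limits the number of uncovered pairs to $O(\beta n^2)$. Taking $\beta=\beta(r,\varepsilon)$ sufficiently small, one can re-run the Step~1 extraction on this near-cover by treating the $O(\beta n^2)$ missing pairs as slack; the extra error is again absorbed into $\varepsilon$ via padding.

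\textbf{Main obstacle.} The principal difficulty lies in the extraction step of Step~1. Partitions supply many disjoint small pieces ``for free,'' so a priori one might need to take unions of many parts per partition to obtain sets that cover most of $\binom{V}{2}$, and inflating the chosen sets above $\varphi(r,n)$ would render the argument vacuous. The naive counting bound $\sum_i\sum_{A\in Q_i}\binom{|A|}{2}\geq\binom{n}{2}$ only recovers $\max_i\max_{A\in Q_i}|A|\geq n/r$, which is significantly weaker than the target $\varphi(r)n\sim n/\sqrt{r}$. Closing this gap presumably requires either an induction on $r$ or a direct exploitation of the projective-plane-based extremal constructions that define $\varphi(r,n)$ (see~\cite{Fur88}*{Section~7}).
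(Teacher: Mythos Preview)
Your reformulation is correct and matches the paper's opening move, but both Step~1 and Step~2 contain genuine gaps that the paper resolves by a fundamentally different mechanism.

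\textbf{The extraction step is the wrong target.} You propose to select a single part $A_i\in Q_i$ from each colour partition so that (after $o(n)$ padding) the chosen parts cover $\binom{V}{2}$. This is not generally achievable, and your own ``main obstacle'' paragraph diagnoses why: under the contradiction hypothesis, each $Q_i$ has $(1-\varphi(r)+\varepsilon)n$ parts, so almost all vertices lie in tiny colour-$i$ components. Selecting one part per colour---even the largest---leaves almost every pair uncovered, and a greedy or weighted selection over parts does not change this. The paper never attempts such an extraction. Instead, it defines $W_i$ to be the set of vertices lying in a \emph{large} colour-$i$ component (so $|W_i|<(\varphi(r)-\varepsilon/2)n$), and then passes to a \emph{bounded-size quotient}: it partitions $V$ into a constant number $m=m(r,\varepsilon)$ of clusters $U_1,\ldots,U_m$ aligned with the Venn diagram of $W_1,\ldots,W_r$, and sets $A_i:=\{s\in[m]:|U_s\cap W_i|\geq n/(2mr)\}$. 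The contradiction with $\varphi$ is obtained on the $m$-element ground set, not on $V$. This clustering is precisely the idea your proposal is missing.

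\textbf{The transfer bound in Step~2 is false as stated.} You assert that the $\beta$-graph property limits the number of ``uncovered'' pairs to $O(\beta n^2)$, because no two disjoint sets of size $\beta n$ can have all cross-pairs uncovered. But a graph on $n$ vertices may have $\Theta(n^2)$ edges while containing no complete bipartite $K_{\beta n,\beta n}$ (a random graph of density $1/2$ is an example), so the implication fails. In the paper, the $\beta$-property is not used globally on pairs at all; it is used \emph{locally}, once per pair of clusters $(U_s,U_t)$, to force many edges between suitable large subsets $X\subseteq U_s$, $Y\subseteq U_t$. Combined with the fact that vertices outside $W_i$ have colour-$i$ degree at most $2/\varepsilon$, this yields that for every pair $s,t$ some $A_i$ contains both $s$ and $t$---i.e., the $A_i$'s cover $\binom{[m]}{2}$. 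Then $\max_i|A_i|\geq\varphi(r,m)\geq(\varphi(r)-\varepsilon/4)m$, while the alignment with the Venn diagram gives $|A_i|\leq |W_i|\cdot m/n+2^r<(\varphi(r)-\varepsilon/4)m$, the desired contradiction.
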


In light of the above discussion, \cref{thm:beta} can be interpreted as saying that $\beta$-graphs essentially achieve the maximum possible $r$-colour spanning-tree discrepancy of any graph on the same number of vertices.

It is worth noting that a relation of a similar flavour --- that is, between a ``covering-pairs" type parameter and a multicolour Ramsey-type problem --- was demonstrated in~\cite{DGS21}.

\subsection{Discrepancy of Hamilton Cycles}
Hamilton cycles are among the most well-studied objects in graph theory, boasting many hundreds of papers. Here we study the multicolour discrepancy of Hamilton cycles in dense graphs.
One of the main results of~\cite{BCJP20} establishes that for every $\eps > 0$, every $n$-vertex graph $G$ with minimum degree at least $(\frac{3}{4} + \eps)n$ satisfies
$\Disc_2(G,\mathcal{H}_n) = \Omega(n)$, and that moreover, the fraction $\frac{3}{4}$ is best possible. Here we generalise this result to any number of colours.
\begin{theorem}\label{thm:ham}
	Let $r \geq 2$ and $0 \leq d \leq \frac{n}{28r^2}$, and let $G$ be a graph with $n \geq n_0(r)$ vertices and minimum degree at least
	$\frac{(r+1)n}{2r} + d$. Then in every $r$-colouring of the edges of $G$ there is a Hamilton cycle with at least
	$\frac{n}{r} + 2d$
	edges of the same colour.
\end{theorem}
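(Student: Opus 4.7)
My plan is a local-search argument over Hamilton cycles, combining a potential function with $2$-opt exchanges. Since $\delta(G)\ge\frac{(r+1)n}{2r}+d\ge n/2$, Dirac's theorem ensures that $G$ has a Hamilton cycle. Let $G_1,\ldots,G_r$ denote the colour classes and, for a Hamilton cycle $C$, set $e_i(C):=|E(C)\cap E(G_i)|$. Among all Hamilton cycles of $G$, fix one $C^\star$ that maximises the potential $\Phi(C):=\sum_{i=1}^{r}e_i(C)^2$, and let $i^\star\in[r]$ be a colour with $e_{i^\star}(C^\star)=\max_i e_i(C^\star)\ge n/r$. The goal is to prove $e_{i^\star}(C^\star)\ge n/r+2d$.

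The basic local move is the $2$-opt exchange: for $C=v_1v_2\ldots v_nv_1$ and non-adjacent cycle indices $a,b$, deleting the cycle edges $v_av_{a+1}, v_bv_{b+1}$ and inserting the crossing chords $v_av_b, v_{a+1}v_{b+1}$ produces a new Hamilton cycle, provided both chords lie in $E(G)$. Call such an exchange $i^\star$-\emph{boosting} if the two removed edges avoid $G_{i^\star}$ while the two inserted chords lie in $G_{i^\star}$. A short computation shows that any $i^\star$-boosting $2$-opt strictly increases $\Phi$ (because $i^\star$ maximises $e_i(C^\star)$), so $C^\star$ admits no such exchange. For every non-$i^\star$ cycle edge $v_av_{a+1}$, this forces
\[
\bigl|\{b\in[n]\colon v_av_b,\ v_{a+1}v_{b+1}\in E(G_{i^\star})\}\bigr|\ \le\ e_{i^\star}(C^\star)+O(1),
\]
since an index $b$ fails to yield a boosting exchange only if it indexes an $i^\star$-coloured cycle edge or falls into an $O(1)$ adjacency neighbourhood of~$a$.

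Combining this with the cyclic inclusion-exclusion bound $|A\cap B|\ge|A|+|B|-n$ applied to $A=\{b\colon v_av_b\in E(G_{i^\star})\}$ and $B=\{b\colon v_{a+1}v_{b+1}\in E(G_{i^\star})\}$ gives $d_{i^\star}(v_a)+d_{i^\star}(v_{a+1})\le n+e_{i^\star}(C^\star)+O(1)$ for every non-$i^\star$ cycle edge. Summing over the $n-e_{i^\star}(C^\star)$ non-$i^\star$ cycle edges and expanding the left-hand side as $\sum_v\tau(v)d_{i^\star}(v)$, where $\tau(v)\in\{0,1,2\}$ counts the non-$i^\star$ cycle edges at $v$ (with $\sum_v\tau(v)=2(n-e_{i^\star}(C^\star))$), yields an upper bound on a weighted $G_{i^\star}$-degree sum. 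A matching lower bound on the same sum comes from the minimum-degree hypothesis $\deg_G(v)\ge\frac{(r+1)n}{2r}+d$ together with the global pigeonhole inequality $\sum_v d_{i^\star}(v)\ge\frac1r\sum_v d_G(v)$ (after possibly replacing $i^\star$ with a globally-most-popular colour in an additional step). Under the slack conditions $d\le n/(20r^2)$ and $n\ge n_0(r)$, comparing the two bounds contradicts $e_{i^\star}(C^\star)<n/r+2d$.

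The principal obstacle is executing this double count tightly enough that the excess $2d$ in the min-degree hypothesis survives three sources of loss: (i)~the positivity truncation in the inclusion-exclusion bound $|A\cap B|\ge\max(0,|A|+|B|-n)$; (ii)~restricting both indices $(a,b)$ to non-$i^\star$ cycle positions, which discards up to $ne_{i^\star}(C^\star)$ pairs; and (iii)~the potentially adversarial correlation between $\tau(v)$ and $d_{i^\star}(v)$ in the weighted degree sum. The threshold $\frac{r+1}{2r}=\frac12+\frac{1}{2r}$ is exactly the point at which this accounting breaks even; any smaller constant admits the tightness construction of the authors with a perfectly balanced $r$-colouring. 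A further subtlety is that the two natural choices of $i^\star$---``most frequent on $C^\star$'' (needed for the potential argument) and ``most edges in $G$'' (needed for the global degree lower bound)---must be reconciled, plausibly via a pigeonhole that averages the derived inequality over all colours and extracts one for which both properties simultaneously hold.
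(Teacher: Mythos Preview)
Your approach is completely different from the paper's, and the plan you outline has a genuine quantitative gap that you have not closed.

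The central inequality you extract---for every non-$i^\star$ cycle edge $v_av_{a+1}$,
\[
d_{i^\star}(v_a)+d_{i^\star}(v_{a+1})\ \le\ n+e_{i^\star}(C^\star)+O(1)
\]
---is far too weak to yield $e_{i^\star}(C^\star)\ge \frac{n}{r}+2d$. The right-hand side is roughly $\frac{(r+1)n}{r}$. For the inequality to bite, you need the left-hand side to be at least that large, which would require $d_{i^\star}(v)\gtrsim \frac{(r+1)n}{2r}$ for typical vertices, i.e.\ essentially the \emph{full} $G$-degree to be concentrated in colour $i^\star$. But nothing in the setup forces this: the minimum-degree hypothesis constrains $d_G(v)$, not $d_{i^\star}(v)$, and pigeonhole only gives $d_{i^\star}(v)\approx d_G(v)/r$ on average, which makes the left-hand side about $\frac{(r+1)n}{r^2}$---an order of magnitude short. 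In the paper's extremal construction, vertices in the sets $V_1,\dots,V_{r-1}$ have $d_r(v)=0$, so per-colour degree lower bounds simply do not exist. The three ``obstacles'' you flag at the end are not mere slack to be absorbed by the hypotheses $d\le n/(20r^2)$ and $n\ge n_0(r)$; they are the whole problem. In particular, the reconciliation of ``most frequent on $C^\star$'' with ``most edges in $G$'' and the correlation between $\tau(v)$ and $d_{i^\star}(v)$ are not side issues---any one of them suffices to kill the argument.

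For comparison, the paper does not use local search at all. It applies the multicolour regularity lemma to pass to a reduced graph $R$ on $t$ clusters with $\delta(R)\ge(\tfrac12+\tfrac{1}{3r})t$, proves a separate lemma (via a short greedy procedure plus Dirac) that any $r$-colouring of such a graph contains a monochromatic matching of size $(\tfrac{1}{2r}+\Omega(\tfrac{1}{r^2}))t$, converts each matching edge into a long monochromatic path between the corresponding clusters using regularity, and finally absorbs the resulting monochromatic path-forest of size $\ge\frac{n}{r}+2d$ into a Hamilton cycle using P\'osa's lemma (which is exactly where the threshold $\frac{(r+1)n}{2r}+d=\frac{n}{2}+(\frac{n}{2r}+d)$ is spent). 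The key point is that the min-degree excess is used \emph{structurally}, to extend a pre-built path-forest, rather than through any per-colour degree count.
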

We remark that the same result has been very recently independently obtained by Freschi, Hyde, Lada and Treglown~\cite{FHLT21}. Our proof is entirely different from the one given in~\cite{FHLT21}, and gives a slightly better dependence of the bound on $d$.

The constant $\frac{r+1}{2r}$ in \cref{thm:ham} is optimal, that is, it cannot be replaced with any smaller constant. Indeed, we now describe an $n$-vertex graph with minimum degree $\frac{(r+1)n}{2r}$ (assuming $2r$ divides $n$) which may be $r$-coloured such that each Hamilton cycle has exactly $\frac{n}{r}$ edges of each colour.\footnote{The same construction appears in~\cite{FHLT21}.}
Let $G$ be the graph on the vertex set $V=V_1\cup\ldots\cup V_r$,
where $V_1,\ldots,V_r$ are disjoint sets, $|V_i|=\frac{n}{2r}$ for $i<r$ and $|V_r|=\frac{(r+1)n}{2r}$.
The edges of $G$ are all pairs touching $V_r$.
Note that the minimum degree of $G$ is $\delta(G)=|V_r|=\frac{(r+1)n}{2r}$.
For each $1 \leq i \leq r-1$, we colour the edges between $V_i$ and $V_r$ in colour~$i$, and the rest of the edges in colour~$r$ (See~\cref{fig:ham}).
It is easy to see that any Hamilton cycle in $G$ has two edges touching every vertex in any $V_i$, $i=1,\ldots,r-1$, and these edges are distinct for distinct vertices.
Thus, the number of edges in any colour is exactly $\frac{n}{r}$.

In the same construction, every perfect matching has exactly $\frac{n}{2r}$ edges touching $V_i$ ($i=1,\ldots,r-1$), and thus coloured $i$; this leaves exactly $\frac{n}{2r}$ edges for colour~$r$.
On the other hand, looking again at \cref{thm:ham}, under its assumptions, we are guaranteed to find a Hamilton cycle with at least one biased colour (colour~$1$, say), namely, in which at least $\frac{n}{r} + 2d$ edges are coloured $1$.
If $n$ is even, this Hamilton cycle can be decomposed into two perfect matchings; one of these perfect matchings will have at least $\frac{n}{2r} + d$ edges in colour~$1$.
Thus, we obtain the following result:
\begin{corollary}\label{cor:PM}
  	Let $r \geq 2$ and $0 \leq d \leq \frac{n}{28r^2}$, and let $G$ be a graph with $n \geq n_0(r)$ vertices ($n$ even) and minimum degree at least
	$\frac{(r+1)n}{2r} + d$. Then in every $r$-colouring of the edges of $G$ there is a perfect matching with at least
	$\frac{n}{2r} + d$
	edges of the same colour.
\end{corollary}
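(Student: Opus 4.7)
The plan is to deduce the corollary directly from \cref{thm:ham}, exactly along the lines sketched in the paragraph that precedes it. Assuming $n$ is even (as is implicit when speaking of perfect matchings), I apply \cref{thm:ham} to the graph $G$ with the given $r$-colouring; this yields a Hamilton cycle $H$ of $G$ and some colour $i\in[r]$ such that at least $\frac{n}{r}+2d$ edges of $H$ are coloured $i$.

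Since $n$ is even, the edges of $H$ partition naturally into two edge-disjoint perfect matchings $M_1$ and $M_2$, obtained by taking alternate edges around the cycle. Each edge of $H$ lies in exactly one of $M_1, M_2$, so the number of $i$-coloured edges of $H$ is the sum of the numbers of $i$-coloured edges in $M_1$ and in $M_2$. By averaging, one of $M_1, M_2$ contains at least $\tfrac{1}{2}\bigl(\tfrac{n}{r}+2d\bigr)=\tfrac{n}{2r}+d$ edges of colour $i$, which is the desired perfect matching.

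I do not expect a genuine obstacle here; the only small point worth mentioning is the parity of $n$. If $n$ is odd the statement is vacuous (no perfect matching exists), while if $n$ is even the alternating decomposition of the Hamilton cycle into two perfect matchings is immediate, so the reduction to \cref{thm:ham} is clean and the corollary follows in a single line of averaging.
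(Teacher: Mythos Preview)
Your proposal is correct and matches the paper's own argument essentially verbatim: apply \cref{thm:ham} to obtain a Hamilton cycle with at least $\frac{n}{r}+2d$ edges in some colour, decompose it (for even $n$) into two perfect matchings, and average. There is nothing to add.
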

\noindent
The constant $\frac{r+1}{2r}$ in \cref{cor:PM} is again optimal, as shown by the above example.
Finally, let us note that in Theorem \ref{thm:ham} and Corollary \ref{cor:PM}, $n_0(r)$ depends (only) polynomially on $r$.

\begin{figure*}[t!]
  \captionsetup{width=0.879\textwidth,font=small}
  \centering
  \includegraphics{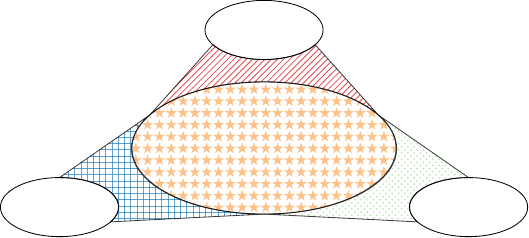}
  \caption{A $4$-coloured graph on $n$ vertices.
    Each of the small bulbs represents a set of $n/8$ vertices, and the large bulb in the middle represents a set of $5n/8$ vertices.
    In this graph every Hamilton cycle has exactly $n/4$ edges coloured in each of the four colours, and, assuming $n$ is even, every perfect matching has exactly $n/8$ edges coloured in each of the four colours.}
  \label{fig:ham}
\end{figure*}

\paragraph{Organisation}
In \cref{sec:strees_main} we prove \cref{thm:trees,prop:tightness}.
\Cref{prop:rrd_tight} is proved in \cref{sec:rrg},
and in \cref{sec:strees:app} we give the full details of the proofs of \cref{cor:hypercube,cor:grid}.
\Cref{thm:beta} is proved in \cref{sec:beta},
and finally in \cref{sec:HamCycles} we establish \cref{thm:ham}.

\paragraph{Notation and terminology}
Let $G=(V,E)$ be a graph.
For two vertex sets $U,W\subseteq V$ we denote by $E_G(U)$ the set of edges of $G$ spanned by $U$ and by $E_G(U,W)$ the set of edges having one endpoint in $U$ and the other in $W$.
The degree of a vertex $v\in V$ is denoted by $d_G(v)$, and we write $d_G(v,U)=|E_G(\{v\},U)|$.
We let $\delta(G)$ and $\Delta(G)$ denote the minimum and maximum degrees of $G$.
When the graph $G$ is clear from the context, we may omit the subscript $G$ in the notations above.

If $f,g$ are functions of $n$ we write $f\preceq g$ if $f=O(g)$ and $f\succeq g$ if $f=\Omega(g)$.
For simplicity and clarity of presentation, we often make no particular effort to optimise the constants obtained in our proofs and omit floor and ceiling signs whenever they are not crucial.

\section{Proof of \cref{thm:trees} and \cref{prop:tightness}}\label{sec:strees_main}
The goal of this section is to prove \cref{thm:trees,prop:tightness}, starting with the former.
Let us introduce some definitions and terminology which will be used in the proof.
    Let $r \geq 2$, let $G$ be a graph, and let
    $f : E(G) \rightarrow [r]$ be an $r$-colouring of the edges of $G$. For each $1 \leq i \leq r$, let $G_i$ be the spanning graph of $G$ consisting of the edges of colour~$i$. Connected components of $G_i$ will be called \defn{colour-$i$ components}. We use $\C_i$ to denote the set of all colour-$i$ components. For a vertex $v \in V(G)$, we denote by $C_i(v)$ the unique colour-$i$ component containing $v$.
	Crucially, define $H = H(G,f)$ to be the $r$-partite $r$-uniform multi-hypergraph with sides $\C_1,\dots,\C_r$, where for each $v \in V(G)$ we add the hyperedge $(C_1(v),\dots,C_r(v)) \in \C_1 \times \dots \times \C_r$ (see \cref{fig:dual}).
    Note that $|E(H)| = |V(G)|$, and that
    $d_H(C) = |C|$ for every $C \in V(H)$.
	In what follows, we will denote vertices of $H$ by capital letters, while vertices of $G$ will be denoted by lowercase letters.
	For a vertex $v \in V(G)$, we will denote by $e_v$ the hyperedge of $H$ corresponding to $v$; and vice versa, for a hyperedge $e \in E(H)$, we will denote by $v_e$ the corresponding vertex of $G$.
	We will need the following very simple observation.
	\begin{observation}\label{obs:disj:hyperedges}
	For $u,v \in V(G)$, if $e_u \cap e_v = \es$ then $\{u,v\} \notin E(G)$.
	\end{observation}
	\begin{proof}
	    Suppose that $\{u,v\} \in E(G)$ and let $j \in [r]$ be the colour of $\{u,v\}$. Then $C_j(u) = C_j(v) \in e_u \cap e_v$, so $e_u \cap e_v \neq \es$.
	\end{proof}
	It turns out that the hypergraph construction $H(G,f)$ is precisely what is needed to prove \cref{thm:trees}.
	It is worth noting that this construction has already been used in prior works, see e.g.\ the
	surveys~\cite{Fur88}*{Section 7} and~\cite{Gya11}.

\begin{figure*}[t!]
  \captionsetup{width=0.879\textwidth,font=small}
  \centering
  \includegraphics{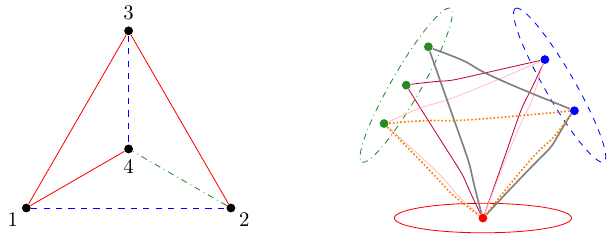}
  \caption{A $3$-coloured $4$-clique and its ``dual'' $3$-partite $3$-uniform multi-hypergraph.}
  \label{fig:dual}
\end{figure*}

A \defn{walk} in a hypergraph $H$ is a sequence of (not necessarily distinct) vertices $v_1,\dots,v_k$ such that for every $1 \leq i \leq k-1$, there is a hyperedge of $H$ containing $v_i,v_{i+1}$.
We say that $H$ is \defn{connected} if there is a walk between any given pair of vertices, or $|V(H)| = 1$.

\begin{lemma}\label{lem:connectivity}
If $G$ is connected then so is $H = H(G,f)$.
\end{lemma}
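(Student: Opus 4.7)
The plan is to translate a path in $G$ into a walk in $H$ by using the colours along the path to identify components that coincide as vertices of $H$. Given two vertices $A,B\in V(H)$, say with $A\in \C_i$ and $B\in \C_j$, the strategy is to pick representatives $u\in A$ and $w\in B$ in $V(G)$ (so that $C_i(u)=A$ and $C_j(w)=B$), use connectivity of $G$ to fix a $u$-$w$ path $u=v_0,v_1,\dots,v_\ell=w$, and then track how the hyperedges $e_{v_0},e_{v_1},\dots,e_{v_\ell}$ of $H$ overlap.

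The key observation I would exploit is the following: if the edge $v_sv_{s+1}$ of the $u$-$w$ path has colour $c_s$, then $v_s$ and $v_{s+1}$ lie in the same colour-$c_s$ component, i.e.\ $C_{c_s}(v_s)=C_{c_s}(v_{s+1})$. Since this common vertex of $H$ belongs both to the hyperedge $e_{v_s}$ (by definition of $e_{v_s}=(C_1(v_s),\dots,C_r(v_s))$) and to the hyperedge $e_{v_{s+1}}$, consecutive hyperedges $e_{v_s},e_{v_{s+1}}$ always share a vertex of $H$.

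With this in hand, constructing the desired walk is straightforward. I would take the sequence of vertices
\[
A\ =\ C_i(v_0),\ C_{c_0}(v_0),\ C_{c_1}(v_1),\ \dots,\ C_{c_{\ell-1}}(v_{\ell-1}),\ C_j(v_\ell)\ =\ B
\]
in $V(H)$. Consecutive entries in this list are explicitly contained in a common hyperedge: the first two both lie in $e_{v_0}$, the pair $C_{c_{s-1}}(v_{s-1})=C_{c_{s-1}}(v_s)$ and $C_{c_s}(v_s)$ both lie in $e_{v_s}$ for $1\le s\le \ell-1$, and the last two lie in $e_{v_\ell}$. Thus this is a walk in $H$ connecting $A$ to $B$, as required.

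I do not expect any serious obstacle here; the statement is essentially bookkeeping once one notices that a monochromatic edge of $G$ produces an identification of two apparent vertices of $H$, and hence causes the corresponding hyperedges to intersect. The only mild subtlety is distinguishing the walk relation in $H$ (vertex-based) from the path along $G$ (edge-based), which is handled precisely by the identity $C_{c_s}(v_s)=C_{c_s}(v_{s+1})$ above.
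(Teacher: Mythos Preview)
Your proof is correct and essentially identical to the paper's: both pick representatives of the two target vertices, connect them by a path in $G$, and use the colour of each path edge to produce a shared colour component linking consecutive hyperedges, yielding a walk in $H$. The paper writes $Z_j$ for your $C_{c_s}(v_s)$ and explicitly notes that when consecutive path edges share a colour the corresponding $Z_j$'s coincide, but this is a cosmetic difference since a walk may repeat vertices anyway.
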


\begin{proof}
  Let $X,Y \in V(H)$.
  Fix arbitrary $x \in X$, $y \in Y$.
  Since $G$ is connected, it contains a path
  $x = z_0,z_1,\dots,z_k = y$ between $x$ and $y$.
  For each $0 \leq j \leq k-1$, let $i_j$ be the colour of the edge $\{z_j,z_{j+1}\} \in E(G)$, and let $Z_j$ be the colour-$i_j$ component containing this edge.
  Observe that for every $0 \leq j \leq k-2$,
  either $\{z_j,z_{j+1}\}$ and $\{z_{j+1},z_{j+2}\}$ have the same colour,
  in which case $Z_j = Z_{j+1}$,
  or $Z_j$ and $Z_{j+1}$ are contained together in a hyperedge of $H$, namely the hyperedge $e_{z_{j+1}}$ corresponding to the vertex $z_{j+1} \in Z_j \cap Z_{j+1}$.
  Similarly, the hyperedge $e_x$ contains both $X$ and $Z_0$, and the hyperedge $e_y$ contains both $Y$ and $Z_k$ (it is possible that $X = Z_0$ or $Y = Z_k$). It is now easy to see that $X,Z_0,\dots,Z_k,Y$ is a walk in $H$ between $X$ and $Y$, as required.
\end{proof}

We now introduce some additional definitions related to the hypergraph $H = H(G,f)$.
Throughout this section, we assume that $G$ is connected, which in turn implies that $H$ is connected as well (by \cref{lem:connectivity}).
We say that $H$ is \defn{trivial} if $H$ has exactly one vertex in each part.
We assume that $H$ is not trivial (otherwise, $H$ has a monochromatic
spanning tree (in fact, a spanning tree in every colour), and this case will
be trivial when proving \cref{thm:trees}).
A \defn{leaf} is a hyperedge which contains at most one vertex of degree at least $2$. (Observe that $e_v$ is a leaf if and only if all edges of $G$ touching $v$ have the same colour.)
Note that since $H$ is connected and not trivial, every hyperedge of $H$ must contain at least one vertex of degree at least $2$, so a leaf in $H$ contains exactly one such vertex.
Let $H_0 = H_0(G,f)$ be the subhypergraph of $H$ obtained by deleting, for each leaf $e$ of $H$, all $(r-1)$ vertices of $e$ which have degree $1$ (in particular, we delete the hyperedge $e$). Note that deleting a leaf from a connected hypergraph leaves it connected, and hence $H_0$ is connected.
    For $X \in V(H_0)$, denote by $L(X)$ the set of leaves of $H$ in which the unique vertex of degree at least $2$ is $X$.
    \begin{lemma}\label{lem:pruning}
      Suppose that $G$ is $k$-connected, for $k\ge 1$.
      Let $X \in V(H_0)$ be such that $L(X) \ne \es$.
      Then either $d_{H_0}(X) \ge k$, or every hyperedge of $H$ contains $X$.
    \end{lemma}
    \begin{proof}
      Suppose, for the sake of contradiction, that $0\le \ell := d_{H_0}(X) \le k-1$ and that there is a hyperedge of $H$ which does not contain $X$.
      Let $e_1, \ldots, e_\ell \in E(H_0)$ be the only hyperedges of $H_0$ containing $X$.
      For $1 \leq i \leq \ell$, let $v_i \in V(G)$ be such that $e_i = e_{v_i}$, and set $S=\{v_1,\ldots,v_\ell\}$.

      Let $U$ be the set of $u \in V(G)$ such that $e_u \in L(X)$, and note that $U \neq \es$ as $L(X) \neq \es$ by assumption.
      Set $W := V(G) \setminus (U \cup S)$.
      Observe that $W$ is precisely the set of $w \in V(G)$ such that $X \notin e_w$, since every hyperedge which contains $X$ is either in $E(H_0)$ (and hence equals $e_{v_i}$ for some $1\le i\le\ell$) or belongs to $L(X)$.
      It follows that $W \neq \es$, since by assumption there is a hyperedge of $H$ which does not contain $X$.
      We now show that in $G$ there are no edges between $U$ and $W$, which would mean that $S$ is a vertex-cut of $G$, in contradiction to the assumption that $G$ is $k$-connected.
      Let $u \in U, w \in W$. Observe that $e_u \cap e_w = \es$. Indeed, $X
      \notin e_u \cap e_w$, because $X \notin e_w$ as $w \in W$. Also, all
      vertices in $e_u \setminus \{X\}$ have degree one in $H$ (as $e_u \in
      L(X)$), hence they cannot belong to $e_w$. So by \cref{obs:disj:hyperedges}, $\{u,w\} \notin E(G)$, as required.
    \end{proof}
    \noindent Using \cref{lem:pruning}, we now show that if $G$ is $2$-connected then $H_0$ cannot have leaves.
    \begin{lemma}\label{lem:H0:noleaves}
        If $G$ is $2$-connected then $H_0$ has no leaves.
    \end{lemma}
    \begin{proof}
        Suppose by contradiction that $H_0$ has a leaf $e$.
        Since $e$ is not a leaf of $H$ (by definition of $H_0$), there are distinct $X,Y \in e$ with $d_H(X),d_H(Y) \geq 2$.
        Since $e$ is a leaf in $H_0$, at least one of $X,Y$ must have degree $1$ in $H_0$; say $d_{H_0}(X) = 1$ without loss of generality.
        Then $L(X) \neq \es$. By \cref{lem:pruning}, every hyperedge of $H$ contains $X$.
        However, this is impossible.
        Indeed, fix any hyperedge $e' \in E(H) \setminus \{e\}$ with $Y \in e'$; such $e'$ exists because $d_H(Y) \geq 2$.
        Now, if $e'\in E(H_0)$ then $X\notin e'$ since $d_{H_0}(X)=1$.
        And if $e'\notin E(H_0)$ then $e'\in L(Y)$
        and again $X\notin e'$.
        We arrived at a contradiction, proving the lemma.
    \end{proof}
    \noindent
    We are now ready to prove \cref{thm:trees}.

	\begin{proof}[Proof of \cref{thm:trees}]
		Let $G$ be a $2$-connected graph on $n$ vertices.
		It will be convenient to prove the theorem in the following (perhaps slightly convoluted) form: $s_r(G) = O(rd + r^2)$, where $d$ is defined as:
		\[
		d :=
		\begin{cases}
		\Disc_r(G,\mathcal{T}_n) & G \text{ is $3$-connected,} \\
		\max\big\{ \Disc_r(G,\mathcal{T}_n), \sqrt{rn} \big\} & \text{otherwise.}
		\end{cases}
        \]
        Since $\Disc_r(G,\cT_n) \leq d$, there exists an $r$-colouring of the edges of $G$ in which there is no spanning tree with more than $\frac{n-1+d}{r}$ edges of the same colour.
        Fixing one such colouring $f$,
        we claim that
		$|\C_i| \geq \frac{(r-1)n + 1 - d}{r}$ for every $1 \leq i \leq r$ (recall that $\C_i$ is the set of colour-$i$ components). Indeed, by taking a spanning tree of each colour-$i$ component, and connecting these spanning trees using edges of other colours (this is possible because $G$ is connected), we obtain a spanning tree of $G$ with $n - |\C_i|$ edges of colour~$i$. Thus, our assumption implies that
		$n - |\C_i| \leq \frac{n-1+d}{r}$, and hence
		$|\C_i| \geq \frac{(r-1)n + 1- d}{r}$, as claimed.

		Let $H = H(G,f)$ and $H_0 = H_0(G,f)$ be the $r$-partite $r$-uniform multi-hypergraphs defined above.
    Recall that $|E(H)| = |V(G)| = n$.
    By \cref{lem:connectivity}, $H$ is connected (and hence so is $H_0$).
    Trivially, $|V(H)| = |\C_1| + \dots + |\C_r|$.
    As $|\C_i| \geq \frac{(r-1)n - d}{r}$ for every $1 \leq i \leq r$, we have
    \begin{equation}\label{eq:H:vx:edge:count}
        |V(H)| \geq (r-1)n - d.
    \end{equation}
    Observe that since $H_0$ is obtained from $H$ by deleting leaves, we have
    $
    |V(H_0)| = \linebreak |V(H)| - (r-1) \cdot (|E(H)| - |E(H_0)|) = |V(H)| - (r-1)n + (r-1) \cdot |E(H_0)|.
    $
    Now, using~\eqref{eq:H:vx:edge:count}, we get:
	\begin{equation}\label{eq:H':vx:edge:count}
	|V(H_0)| \geq (r-1) \cdot |E(H_0)| - d.
	\end{equation}

    Recalling the statement of \cref{lem:pruning},
    we now observe that
    the second option in the conclusion of the lemma is impossible, as it would imply that one of the parts $\C_1,\dots,\C_r$ contains just one vertex (namely, $X$), in contradiction to the fact that $|\C_i| \geq \frac{(r-1)n + 1 - d}{r} > 1$ for every $1 \leq i \leq r$. Hence, we have the following:
    \begin{claim}\label{cor:3-connectivity_min-degree}
    If $G$ is $3$-connected, then for every $X \in V(H_0)$ with $L(X) \neq \es$, it holds that $d_{H_0}(X) \geq 3$.
    \end{claim}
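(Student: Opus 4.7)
The plan is to apply \cref{lem:3-connectivity_min-degree} directly and then rule out the second alternative in its conclusion by a short counting argument using the bound $|\C_i| \geq \frac{(r-1)n - d}{r}$ established just before.

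More concretely, fix $X \in V(H_0)$ with $L(X) \neq \es$. By \cref{lem:3-connectivity_min-degree}, either $d_{H_0}(X) \geq 3$ (in which case the claim holds), or every hyperedge of $H$ contains $X$. I will show the latter is impossible. Let $i \in [r]$ be the index such that $X \in \C_i$; since $H$ is $r$-partite with parts $\C_1,\dots,\C_r$ and each hyperedge uses exactly one vertex from each part, every hyperedge of $H$ intersects $\C_i$ in precisely one vertex. If every hyperedge contains $X$, then $X$ is the sole vertex of $\C_i$ that belongs to any hyperedge, and since every vertex of $H$ is covered by some hyperedge (indeed $d_H(C) = |C| \geq 1$ for all $C \in V(H)$), this forces $\C_i = \{X\}$, i.e.\ $|\C_i| = 1$.

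On the other hand, we have already shown $|\C_i| \geq \frac{(r-1)n - d}{r}$. Under the working assumption $d \leq cn$ for a sufficiently small $c = c(r) > 0$ (and $r \geq 2$, $n$ large enough), this is strictly greater than $1$, yielding the desired contradiction.

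The proof is essentially immediate once one lines up these two facts, so there is no real obstacle; the only thing worth being careful about is ensuring the constant $c$ in the blanket assumption $d \leq cn$ is indeed small enough relative to $r$ so that $\frac{(r-1)n-d}{r} > 1$ for all $n \geq n_0(r)$. This is a trivial calculation and can be absorbed into the choice of $C = C(r)$ at the end of the proof of \cref{thm:trees}.
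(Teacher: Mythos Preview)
Your proposal is correct and follows exactly the paper's approach: apply \cref{lem:3-connectivity_min-degree}, then rule out the second alternative by noting it would force $|\C_i|=1$, contradicting the lower bound $|\C_i|\geq\frac{(r-1)n-d}{r}>1$. Your write-up is just a slightly more detailed version of the one-sentence justification the paper gives immediately before stating the claim.
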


    Next, we show that by omitting $O(d)$ hyperedges, one can obtain a
    spanning subhypergraph $H_1$ of $H_0$ in which all vertex degrees are not
    larger than $2$, and every hyperedge contains at least $r-2$ vertices of
    degree $1$ (and hence at most $2$ vertices of degree $2$). It is easy to
    see that a hypergraph with these properties is a disjoint union of loose
    paths and cycles\footnote{Recall that an $r$-uniform loose path (resp.\
    cycle) is an $r$-uniform hypergraph obtained from a ($2$-uniform) path
  (resp.\ cycle) by adding $r-2$ ``new" vertices to each of its edges, with distinct edges receiving disjoint sets of new vertices.}.
    We will in fact also guarantee that every vertex $X \in V(H_0)$ with $d_{H_0}(X) \geq 3$ is isolated in $H_1$.
	\begin{claim}\label{claim:cleaning_the_2-core}
		There exists a spanning subhypergraph $H_1$ of $H_0$ with $|E(H_1)| \geq |E(H_0)| - 8d$, having the following properties:
		\begin{enumerate}
			\item The maximum degree in $H_1$ is at most $2$. Furthermore, for every $X \in V(H_0)$, if ${d_{H_0}(X) \ge 3}$ then $d_{H_1}(X) = 0$.
			\item Every hyperedge of $H_1$ contains at least $r-2$ vertices of degree $1$ (in $H_1$).
		\end{enumerate}
	\end{claim}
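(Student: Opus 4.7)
The plan is to partition the vertices of $H_0$ by degree, deduce via a compact double count that both ``bad'' hyperedges and hyperedges incident to vertices of high $H_0$-degree are scarce, and then obtain $H_1$ by deleting all of them.

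Concretely, I would partition $V(H_0)=V_1\cup V_2\cup V_{\ge 3}$ according to whether $d_{H_0}(X)$ equals $1$, $2$, or is at least $3$; one may assume there are no isolated vertices, since a short case analysis shows that an isolated vertex of $H_0$ forces $G$ to be monochromatic in some colour, which is inconsistent with $d\le cn$. \cref{lem:2-connectivity} immediately gives $|V_1|\le (r-2)|E(H_0)|$ because each hyperedge has at most $r-2$ vertices of degree $1$. Combining this with the degree-sum identity $r|E(H_0)|=|V_1|+2|V_2|+\sum_{X\in V_{\ge 3}}d_{H_0}(X)$ and with \eqref{eq:H' vertex/edge count} --- concretely, by doubling the vertex-count inequality and subtracting the degree sum --- one obtains, after a few lines, the matching lower bound $|V_1|\ge (r-2)|E(H_0)|-2d$ together with $\sum_{X\in V_{\ge 3}}(d_{H_0}(X)-2)\le 2d$.

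Calling a hyperedge of $H_0$ \emph{bad} if it has fewer than $r-2$ degree-$1$ vertices, a one-line incidence bound $|V_1|\le (r-2)|E(H_0)|-|\{\text{bad edges}\}|$ yields at most $2d$ bad hyperedges. Moreover, since $d_{H_0}(X)-2\ge d_{H_0}(X)/3$ for every $X\in V_{\ge 3}$, the second inequality upgrades to $\sum_{X\in V_{\ge 3}}d_{H_0}(X)\le 6d$, bounding the number of hyperedges incident to $V_{\ge 3}$ by $6d$.

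Finally, I would take $H_1$ to be $H_0$ with every bad hyperedge and every hyperedge touching $V_{\ge 3}$ deleted; this removes at most $8d$ hyperedges. By construction, each surviving hyperedge $e$ satisfies $|e\cap V_1|=r-2$ and $|e\cap V_2|=2$: the $V_1$-vertices of $e$ have degree exactly $1$ in $H_1$ (their unique $H_0$-incident edge, namely $e$, survived), the two $V_2$-vertices have degree at most $2$ in $H_1$, and vertices of $V_{\ge 3}$ are isolated in $H_1$. This verifies both items of the claim. The only real difficulty lies in the bookkeeping of the double count that yields the constant $8$; the isolated-vertex caveat is a minor side issue handled by the aforementioned monochromaticity remark.
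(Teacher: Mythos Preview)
Your argument is correct and follows essentially the same double-counting strategy as the paper: the paper likewise bounds the number of hyperedges meeting $V_2\cup V_{\ge 3}$ in more than two vertices by $2d$ and the number of hyperedges touching $V_{\ge 3}$ by $6d$, and obtains $H_1$ by deleting both families. Your bookkeeping is marginally more compact---extracting both key inequalities from a single combined estimate and using $d_{H_0}(X)-2\ge d_{H_0}(X)/3$ in place of the paper's two-step manipulation---but the proofs are otherwise identical.
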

	\begin{proof}
		Define $A := \{X \in V(H_0) : d_{H_0}(X) = 1\}$ and
		$B := \{X \in V(H_0) : d_{H_0}(X) \geq 2\}$. We have $V(H_0) = A \cup B$. By \cref{lem:H0:noleaves}, every hyperedge of $H_0$ contains at least $2$ vertices from $B$, and hence at most $r-2$ vertices from $A$.
		For every $2 \leq i \leq r$, let $t_i$ be the number of hyperedges of $H_0$ which contain exactly $i$ vertices from $B$ (and hence exactly $r-i$ vertices from $A$).
		Then $t_2 + \dots + t_r = |E(H_0)|$. By the definition of $A$, we have
		\begin{equation}\label{eq:sum_of_degrees_over_A}
		|A| =
		\sum_{i = 2}^{r}{(r-i) \cdot t_i} \leq
		(r-2) \cdot |E(H_0)|.
		\end{equation}
		On the other hand,
		\[
		2|V(H_0)| - |A| = |A| + 2|B| \leq \sum_{X \in V(H_0)}{d_{H_0}(X)} =
		r \cdot |E(H_0)|.
		\]
		So we get that $|A| \geq 2|V(H_0)| - r \cdot |E(H_0)| \geq (r-2) \cdot |E(H_0)| - 2d$, where the last inequality uses~\eqref{eq:H':vx:edge:count}.
        Next, observe that
		\[
		\sum_{i = 3}^{r}{t_i} \leq
		\sum_{i = 3}^{r}{(i-2) \cdot t_i} =
		(r-2) \cdot \sum_{i = 2}^{r}{t_i} - \sum_{i = 2}^{r}{(r-i) \cdot t_i} =
		(r-2) \cdot |E(H_0)| - |A| \leq 2d,
		\]
		where the last equality uses the equality from~\eqref{eq:sum_of_degrees_over_A}.
		So we see that in $H_0$ there are at most $2d$ hyperedges which contain more than $2$ vertices of degree at least $2$.
		Let $E_1$ be the set of such hyperedges, and note that
		$|E_1| \leq 2d$.

		Next, let us handle high-degree vertices. For each $i \geq 1$, let $m_i$ be the number of vertices $X \in V(H_0)$ satisfying $d_{H_0}(X) = i$. Then
		\begin{equation}\label{eq:H_0_degrees_edge-count}
		    \sum_{i \geq 1}{m_i \cdot i} = r \cdot |E(H_0)|
		\end{equation}
		and
		\begin{equation}\label{eq:H_0_degrees_vertex-count}
		    \sum_{i \geq 1}{m_i} = |V(H_0)| \geq (r-1) \cdot |E(H_0)| - d,
		\end{equation}
		where the inequality
		is~\eqref{eq:H':vx:edge:count}.
		Subtracting \eqref{eq:H_0_degrees_vertex-count} from \eqref{eq:H_0_degrees_edge-count}, we obtain
		\begin{equation}\label{eq:H_0_degrees_excess}
		    \sum_{i \geq 2}{m_i \cdot (i-1)} \leq
		    |E(H_0)| + d.
		\end{equation}
		Next, note that $m_1 = |A|$ and hence
		$m_1 \leq (r-2) \cdot |E(H_0)|$ by~\eqref{eq:sum_of_degrees_over_A}.
		Combining this with \eqref{eq:H_0_degrees_vertex-count}, we see that
		$
		\sum_{i \geq 2}{m_i} \geq |E(H_0)| - d.
		$
		Now, subtracting this inequality from \eqref{eq:H_0_degrees_excess}, we obtain
	    \begin{equation}\label{eq:H_0_high-degrees}
	        \sum_{i \geq 2}{m_i \cdot (i-2)} \leq
		    2d.
	    \end{equation}
	    From \eqref{eq:H_0_high-degrees} it follows,  in particular, that
	    $\sum_{i \geq 3}{m_i} \leq 2d$.
        Multiplying this inequality by two and adding the result to \eqref{eq:H_0_high-degrees}, we get that
	    \begin{equation*}
	        \sum_{i \geq 3}{m_i \cdot i} =
	        \sum_{i \geq 3}{m_i \cdot (i-2)} +
	        2 \cdot \sum_{i \geq 3}{m_i} \leq
		    6d.
	    \end{equation*}
		Now observe that $\sum_{i \geq 3}{m_i \cdot i}$ is an upper bound on the number of hyperedges of $H_0$ which contain a vertex of degree at least $3$. Let $E_2$ be the set of such hyperedges (so $|E_2| \leq 6d$). By deleting from $H_0$ the hyperedges in $E_1 \cup E_2$, we obtain a hypergraph satisfying Items 1--2 in the claim. Furthermore, the number of deleted hyperedges is at most $8d$, as required.
	\end{proof}

    For $1 \leq i \leq r$, define
    $L_i := \bigcup_{X \in \C_i \cap V(H_0)}{L(X)}$.
    In other words,
    $L_i$ is the set of leaves of $H$ whose (unique) vertex of degree at least $2$ belongs to $\C_i$.
    Note that
    \begin{equation}\label{eq:leaf_count}
    n = |E(H)| = |E(H_0)| + |L_1| + \dots + |L_r|.
    \end{equation}

    \begin{claim}\label{claim:balanced_number_of_leaves}
    	$|L_i| \leq \frac{n+d}{r}$ for every $1 \leq i \leq r$.
    \end{claim}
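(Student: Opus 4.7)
The plan is to identify $L_i$ with a concrete family of vertices of $G$ and then exploit the discrepancy hypothesis on a family of spanning trees. First I would interpret $L_i$ concretely. Let
\[
U_i := \{v\in V(G) : \text{every edge of $G$ incident to $v$ has colour $i$}\}.
\]
I claim that the map $v\mapsto e_v$ is a bijection between $U_i$ and $L_i$: the hyperedge $e_v$ is a leaf of $H$ whose unique high-degree vertex lies in $\C_i$ if and only if $|C_j(v)|=1$ for every $j\ne i$ and $|C_i(v)|\ge 2$, and these two conditions say exactly that $v$ has no edge of any colour other than $i$ and that $v$ has at least one edge (the second being automatic since $G$ is connected). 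Moreover, the ``heavy'' vertex $C_i(v)$ has degree $|C_i(v)|\ge 2$ in $H$ and therefore lies in $V(H_0)$, so it genuinely contributes to $L_i$. Hence $|L_i|=|U_i|$.

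Next, I would lower-bound the number of colour-$i$ edges in every spanning tree of $G$. Let $G'$ denote the spanning subgraph of $G$ obtained by removing all colour-$i$ edges, and let $\ell$ be its number of connected components. Every vertex of $U_i$ is isolated in $G'$, so, assuming $U_i\ne V(G)$, we have $\ell\ge |U_i|+1$: the $|U_i|$ singletons from $U_i$ together with at least one further component on $V(G)\setminus U_i$. For any spanning tree $T$ of $G$, the non-colour-$i$ edges of $T$ form a spanning forest in $G'$, which has at least $\ell$ components. Consequently $T$ has at most $n-\ell$ edges not of colour $i$, and therefore at least $(n-1)-(n-\ell)=\ell-1\ge |U_i|$ edges of colour $i$.

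Finally, I would combine this lower bound with the discrepancy hypothesis. Since $\Disc(G,\cT_n,f)\le d$, every spanning tree of $G$ contains at most $(n-1+d)/r$ edges of each single colour. Applied to colour $i$ this yields $|U_i|\le (n-1+d)/r<(n+d)/r$, which is the desired bound. The degenerate case $U_i=V(G)$ is excluded by the standing assumption $d\le cn$ with $c=c(r)>0$ sufficiently small: if every edge of $G$ had colour $i$, then every spanning tree would contain all $n-1$ edges of colour $i$, forcing $d\ge (r-1)(n-1)$, which is incompatible with $d\le cn$ for $c<r-1$.

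The only real subtlety is the bookkeeping in the first step --- confirming that $C_i(v)$ survives the passage from $H$ to $H_0$ and that the leaf-structure of $e_v$ translates exactly into the condition ``all edges at $v$ are of colour $i$''. Once this identification is in place, the remainder is a short deterministic counting argument that does not require any further structural analysis of $H_0$.
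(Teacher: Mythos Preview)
Your proof is correct, and it takes a genuinely different route from the paper's argument.

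The paper argues entirely inside the hypergraphs $H$ and $H_0$. It shows, via the identity $\sum_{j\ne i}|L_j|=|\C_i|-|\C_i\cap V(H_0)|$ and the fact that $H_0$ has no isolated vertices (so $|\C_i\cap V(H_0)|\le |E(H_0)|$), that $|L_i|\le n-|\C_i|$; it then invokes the previously established bound $|\C_i|\ge \frac{(r-1)n-d}{r}$ (itself a consequence of the discrepancy hypothesis) to conclude. Your argument instead translates $L_i$ back to the graph: you identify $L_i$ bijectively with the set $U_i$ of vertices all of whose incident edges have colour $i$, and then observe directly that every spanning tree contains at least $|U_i|$ edges of colour $i$, so the discrepancy hypothesis gives $|L_i|=|U_i|\le\frac{n-1+d}{r}$.

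What each approach buys: the paper's argument stays within the hypergraph framework and yields the intermediate structural inequality $|L_i|\le n-|\C_i|$, which is independent of $d$ and fits the overall bookkeeping via \eqref{eq:leaf_count}. Your argument is shorter and more transparent about \emph{why} the bound holds --- it exposes the simple graph-theoretic meaning of $L_i$ and applies the discrepancy hypothesis in one stroke, without needing the connectedness of $H_0$ or the counting identity for the $|L_j|$'s. Note incidentally that your lower bound ``every spanning tree has at least $|U_i|$ edges of colour $i$'' combined with the paper's construction of a tree with exactly $n-|\C_i|$ such edges also recovers the paper's inequality $|L_i|\le n-|\C_i|$.
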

  \begin{proof}
    Say that a vertex-set in $G$ is $i$-monochromatic if every edge of $G$ which is incident to a vertex from that set is coloured $i$.
    Observe that if $U\subsetneq V$ is $i$-monochromatic then every spanning tree of $G$ has at least $|U|$ edges coloured $i$.
    Indeed, fix any spanning tree of $G$ and fix a vertex in $V \setminus U$ to be the root of the tree. Orient all edges towards that root.
    Every vertex in $U$ has a unique outgoing edge in the tree, and this edge has colour~$i$.
    Now, let $U$ be the vertices in $G$ corresponding to hyperedges in $L_i$.
    Then $U$ is $i$-monochromatic, hence $|L_i| \le \frac{n-1+d}{r}$.
  \end{proof}

    To complete the proof, we need to find a balanced $r$-separator of $G$ of size $O(rd + r^2)$. In the following claim, we essentially achieve this task by finding a partition of the edges of $H$ which (roughly) corresponds to such a separator. We then explain how to conclude using this claim.
     \begin{claim}\label{claim:main}
    	There exists a partition
    	    $E(H) = E_1 \cup \dots \cup E_r \cup F$ such that:
    	    \begin{enumerate}
    	    	\item $|F| = O(d + r)$, and $|E_i| \leq \frac{n+d}{r} + 2$ for every $1 \leq i \leq r$.
    	    	\item $e_i \cap e_j = \es$ for every $1 \leq i < j \leq r$ and $e_i \in E_i, e_j \in E_j$.
    	    \end{enumerate}
    \end{claim}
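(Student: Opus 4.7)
My plan is to construct the desired partition in three stages. I will start by setting $F := E(H_0)\setminus E(H_1)$; this already has size at most $8d$ by \cref{claim:cleaning_the_2-core}. The remaining hyperedges of $H$ split into two natural families: the edges of $H_1$, which by \cref{claim:cleaning_the_2-core} form a disjoint union of loose paths and loose cycles, and the leaf hyperedges of $H$, naturally grouped into leaf clusters $L(X)$, one per vertex $X \in V(H_0)$ with $L(X) \ne \es$. The crucial structural point in the $3$-connected case is that \cref{cor:3-connectivity_min-degree} forces $d_{H_0}(X) \ge 3$ whenever $L(X) \ne \es$, so every such $X$ is isolated in $H_1$. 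Consequently the loose paths/cycles of $H_1$ and the leaf clusters are pairwise vertex-disjoint, and each may be treated as an indivisible ``piece'' that can be assigned in its entirety to any single bin $E_i$ without risking a conflict at a shared vertex. In the $2$-connected case, where $d_{H_0}(X)$ may equal $2$, I will instead enlarge the piece containing $X$'s loose path to also include $L(X)$, which preserves vertex-disjointness of the resulting (enlarged) pieces.

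In the second stage I will reduce to a one-dimensional bin-packing problem with items of bounded size. By \cref{claim:balanced_number_of_leaves}, each leaf cluster satisfies $|L(X)| \le |L_i| \le (n+d)/r \le B$, where $B := (n+d)/r + 2$, so clusters already fit inside a single bin. For each loose path or cycle of length greater than $B$, I will pre-split it into sub-pieces of length at most $B$ by moving the separating internal hyperedges into $F$; since $|E(H_1)| \le n$, this contributes at most $n/B \le r$ extra edges to $F$. After this pre-processing every piece has size at most $B$.

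In the third stage I will distribute the pieces across the bins via a cumulative-sum argument. I plan to arrange all pieces in a linear sequence, let $T$ be the total mass, and declare the $r-1$ target cut positions $T/r, 2T/r, \dots, (r-1)T/r$, with the pieces lying between consecutive cut positions forming bin $E_i$. Each target cut either falls at a piece boundary (cost zero) or strictly inside a piece; if the latter piece is a loose path or cycle, I will move the corresponding hyperedge into $F$ at cost $1$. The delicate situation is when a cut lands inside a leaf cluster, since a cluster cannot be split across two different $E_i$'s without both bins sharing the common vertex $X$; I will handle this by choosing the order of the pieces so that no cut falls strictly inside any cluster, using the $+2$ bin slack together with the bound $|L(X)| \le B$ to absorb the small shifts required. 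This yields at most $r-1$ additional edges in $F$, for a grand total $|F| \le 8d + r + (r-1) = O(d+r)$, and bin sizes $|E_i| \le T/r + O(1) \le (n+d)/r + 2$, as claimed. The hard part will be precisely this third step: naively, a straddling cluster would force up to $(n+d)/r$ extra edges into $F$, far exceeding the target budget, so the ordering must be chosen with care. The bounded cluster sizes and the $+2$ bin slack are what I expect to make such an ordering exist within the $O(d+r)$ budget; in the $2$-connected case, the hypothesis $s_r(G) \ge C\sqrt{n}$ (which gives $d \gtrsim \sqrt{rn}$) provides the additional $F$-budget needed to absorb the occasional enlarged piece whose internal structure forces a suboptimal cut.
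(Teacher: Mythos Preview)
Your overall architecture matches the paper's: start with $F := E(H_0)\setminus E(H_1)$, treat $H_1$ as loose paths and cycles, and then allocate path-edges together with the leaf clusters $L(X)$ into the bins $E_1,\dots,E_r$. Your observation that in the $3$-connected case every $X$ with $L(X)\ne\varnothing$ is isolated in $H_1$ is correct and is exactly the point of \cref{cor:3-connectivity_min-degree} combined with Item~1 of \cref{claim:cleaning_the_2-core}.

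However, your ``third stage'' is not a proof, and the specific mechanism you propose cannot work as written. You want to line the pieces up, make cuts at the positions $jT/r$, and argue that a suitable ordering prevents any cut from landing inside a leaf cluster, using ``the $+2$ bin slack together with the bound $|L(X)|\le B$''. But $B=(n+d)/r+2$ is essentially the full bin size, so a single cluster can occupy almost an entire bin: a shift needed to move a cut off such a cluster is of order $n/r$, not $O(1)$, and no ordering argument with only ``$+2$'' slack can absorb that. Without further input, your cumulative-sum scheme could be forced to send an entire cluster of size $\Theta(n/r)$ into $F$, destroying the $O(d+r)$ bound. In the $2$-connected case the situation is worse: your ``enlarged pieces'' (a loose path together with the clusters $L(X)$ hanging off it) can have size far exceeding $B$, and you never explain how to cut them without splitting a cluster; the appeal to $d\gtrsim\sqrt{rn}$ at the end is a gesture, not an argument.

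The idea you are missing is the large/small dichotomy for clusters. The paper sets $M:=\{X:|L(X)|>d/r\}$ and treats the two regimes differently. Large clusters are pre-assigned: if $X\in\mathcal C_i\cap M$, put all of $L(X)$ into $E_i$. This is where \cref{claim:balanced_number_of_leaves} is actually used, since it guarantees that the total pre-assignment to $E_i$ is at most $|L_i|\le (n+d)/r$; your proposal never invokes this structural fact. One then deletes $M$ from $H_1$: in the $3$-connected case this costs nothing (every $X\in M$ is already isolated in $H_1$), while in the $2$-connected case $|M|<nr/d\le d$ because $d\ge\sqrt{rn}$, so at most $2|M|<2d$ edges are removed. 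After this, \emph{every} remaining cluster has size at most $d/r$, and now a greedy fill of the bins with path-edges and small clusters works: each incremental step adds at most $d/r+2$ edges, which is precisely why the ``$+2$'' slack in the bin cap $\frac{n+d}{r}+2$ suffices, and why only $O(r)$ separating edges end up in $F$.
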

	\begin{proof}
		It will be convenient to construct the sets $E_1,\dots,E_r,F$ gradually, i.e.\ by placing various elements in one of these sets at certain stages in the proof. The final sets $E_1,\dots,E_r,F$ form the required partition.

		Let $H_1$ be a spanning subhypergraph of $H_0$ satisfying the assertion of Items 1--2 in \cref{claim:cleaning_the_2-core}. Put all hyperedges in $E(H_0) \setminus E(H_1)$ into $F$. \Cref{claim:cleaning_the_2-core} guarantees that there are at most $8d$ such hyperedges.

		Let $M$ be the set of vertices $X \in V(H_0)$ with $|L(X)| > d/r$. Since $\sum_{X \in V(H_0)}{|L(X)|} = \sum_{i = 1}^{r}{|L_i|} \leq |E(H)| = n$, we have
		$|M| < nr/d$. In particular, if $G$ is not $3$-connected, then our choice of $d$ for that case guarantees that $|M| < d$.
		For each $1 \leq i \leq r$ and $X \in \C_i \cap M$, place into $E_i$ all elements of $L(X)$. Then at the moment we have $E_i \subseteq L_i$, and hence $|E_i| \leq |L_i| \leq \frac{n+d}{r}$ by \cref{claim:balanced_number_of_leaves}. Moreover, the current $E_1,\dots,E_r$ satisfy the assertion of Item~2 because for every $1 \leq i < j \leq r$, no two hyperedges $e_i \in L_i, e_j \in L_j$ intersect.

		Let $H_2$ be the subhypergraph of $H_1$ obtained by deleting from it all vertices belonging to $M$. Put in $F$ all hyperedges of $H_1$ which touch vertices of $M$. Since the maximum degree of $H_1$ is at most $2$ (by Item~1 in \cref{claim:cleaning_the_2-core}), the number of such hyperedges is at most $2|M|$, which is less than $2d$ in the case that $G$ is not $3$-connected.
		If, on the other hand, $G$ is $3$-connected, then there are no hyperedges of $H_1$ whatsoever which touch vertices of $M$. Indeed, this follows from \cref{cor:3-connectivity_min-degree}, which implies that if $X \in M$ then $d_{H_0}(X) \geq 3$, and Item~1 of \cref{claim:cleaning_the_2-core}, which guarantees that $d_{H_1}(X) = 0$ for each such $X$.
		We conclude that in any case, the number of hyperedges added to $F$ at this step is less than $2d$, and hence
		$|F| = O(d)$ at this moment.

		Since $H_2$ is a subhypergraph of $H_1$, it also satisfies the assertion of Items 1--2 in \cref{claim:cleaning_the_2-core}.
		As mentioned above, this means that $H_2$ is a disjoint union of loose paths and cycles (some components may be isolated vertices, and cycles may have length $2$).
		Let $P_1,\dots,P_{\ell}$ be the connected components of
		$H_2$ (each being a loose path or cycle). We now go over $P_1,\dots,P_{\ell}$ in some order, and, when processing $P_k$, do as follows.
		Let $X_1,\dots,X_t$ be the vertices of $P_k$, ordered so that each hyperedge of $P_k$ is a (possibly cyclic) interval in this order (such an ordering exists since $P_k$ is a loose path or cycle).
		Fix $1 \leq i \leq r$ such that $|E_i| \leq \frac{n}{r}$ at this moment (such an $1 \leq i \leq r$ evidently has to exist, as $E_1,\dots,E_r \subseteq E(H)$ are disjoint and $|E(H)| = n$). Let $j$ be the largest integer
		$1 \leq j \leq t$ with the property that adding to $E_i$ all hyperedges in $E' := E(\{X_1,\dots,X_j\}) \cup L(X_1) \cup \dots \cup L(X_j)$ does not increase the size of $E_i$ beyond $\frac{n+d}{r} + 2$. Here $E(\{X_1,\dots,X_j\})$ denotes the set of edges of $P_k$ contained in $\{X_1,\dots,X_j\}$. Observe that $j$ is well-defined, because $|L(X_1)| \leq d/r$ (as $X_1 \notin M$), and because $1 \leq i \leq r$ was chosen so that $E_i$ is not larger than $\frac{n}{r}$ before this step.
		If $j = t$, in which case we added to $E_i$ all edges in $E(P_k) \cup \bigcup_{X\in P_k}{L(X)}$, then we simply continue to the next connected component.
		Suppose now that $j < t$.
		Then it must be the case that after placing $E'$ into $E_i$, the size of $E_i$ exceeds $\frac{n}{r}$, because otherwise we could also add to
		$E_i$ the set $L(X_{j+1})$ and all (at most $2$) hyperedges in $E(\{X_1,\dots,X_{j+1}\})$ containing $X_{j+1}$, thus increasing $|E_i|$ by at most $ |L(X_{j+1})| + 2 \leq d/r + 2$, in contradiction to the maximality of $j$.
		We will say that $E_i$ is \defn{saturated} whenever $|E_i| > \frac{n}{r}$.
		Place in $F$ any hyperedges $e \in E(P_k)$ satisfying $e \cap \{X_1,\dots,X_j\} \neq \es$ and $e \cap \{X_{j+1},\dots,X_t\} \neq \es$, of which there are at most $2$, and put $P_k[\{X_{j+1},\dots,X_t\}]$ into the list of connected components to be processed. The fact that we remove such hyperedges $e$ guarantees that the assertion of Item~2 will be satisfied. Note that if $E_i$ becomes saturated, then no more hyperedges will be added to it at any later stage. Since each $E_i$ ($1 \leq i \leq r$) can become saturated only once, the overall number of edges added to $F$ in this process is at most $2r$. Hence, at the end of the process we have $|F| \leq O(d) + 2r = O(d + r)$. This completes the proof of the claim.
		\end{proof}
		We now complete the proof using \cref{claim:main}.
Let $E(H) = E_1 \cup \dots \cup E_r \cup F$ be a partition satisfying Items 1--2 in that claim. We have
$\min\{|E_1|,\dots,|E_r|\} \geq
n - (r-1) \cdot (\frac{n+d}{r} + 2) - |F| =
\frac{n}{r} - O(d+r)$.
Hence, by moving at most $O(rd + r^2)$ elements from $E_1,\dots,E_r$ to $F$, we may assume that $|E_1| = \dots = |E_r|$.
Now, put $V_i := \{v_e : e \in E_i\}$ (for $1 \leq i \leq r$) and $S := \{v_e : e \in F\}$. Then $|S| = |F| = O(rd + r^2)$ and $|V_1| = \dots = |V_r|$.
For each $1 \leq i < j \leq r$, combining \cref{obs:disj:hyperedges} with
Item 1 in \cref{claim:main} yields that $G$ has no edges between $V_i$ and $V_j$.
It follows that $F$ is a balanced $r$-separator of $G$ of size $O(rd + r^2)$, as required.
This completes the proof.
\end{proof}

\subsection{Tightness: Proof of \cref{prop:tightness}}
The goal of this section is to show that the lower bound on the balanced $r$-separation number of the graph that appears in \cref{thm:trees} is essentially tight.
This is achieved by proving \cref{prop:tightness}.
To this end, we shall construct an $n$-vertex graph $G$ with $s_r(G)=\Omega(\sqrt{n})$ and $\Disc_r(G,\Tn)\le 1$.
The graph will be a \defn{clique cycle}, namely, a cycle (of length $\Theta(\sqrt{n})$) with a disjoint clique attached to each of its edges (see~\cref{fig:cliquecycles}).
Such graphs are obviously $2$-connected.
We will have to choose the sizes of the hanging cliques carefully;
indeed, if the cliques are of equal sizes, say, then one could easily construct a balanced $r$-separator of size $O(r)$.
We will also have to ensure that the clique sizes allow a balanced colouring that guarantees small discrepancy.

For the first task (i.e.,\ forcing a large balanced $r$-separator) we shall use the next technical lemma.
For integers $0\le x<k$ and $0\le i<k$ let $a^{k,x}_i=x$ if $i<x$ and $a^{k,x}_i=k+x$ otherwise.
One can easily verify that for every $0\le x<k$ we have $\sum_{i=0}^{k-1} a^{k,x}_i=k^2$.
Fix $r\ge 2$.
Let $\vect{x}=(x_1,\ldots,x_r)$ be a vector of strictly increasing nonnegative integers, and let $k>x_r$ (so $k\ge r$).
Write $R:=\max\{r,x_r\}$.
Set $\mu=\mu(\vect{x}):=\frac{1}{r}\sum_{j=1}^r x_j$ and assume that $\vect{x}$ is such that $\mu$ is not an integer.
For $0\le i<rk$
write $i=q_ir+t_i$ for $0\le q_i<k$ and $0\le t_i<r$,
and set $b^{k,\vect{x}}_i = a^{k,x_{t_i}}_{q_i}$.
Note that
\begin{equation}\label{eq:bi}
  \sum_{i=0}^{rk-1} b_i^{k,\vect{x}}
  = \sum_{q=0}^{k-1} \sum_{t=0}^{r-1} a_q^{k,t}
  = \sum_{t=0}^{r-1} \sum_{q=0}^{k-1} a_q^{k,t} = rk^2.
\end{equation}
For $I\subseteq\{0,1,\ldots,rk-1\}$ write $\Sigma I=\sum_{i\in I}b^{k,\vect{x}}_i$ for the ``sum'' of $I$,
and let $D(I)=\left| \Sigma I-k^2 \right|$ denote its ``discrepancy'', namely, the deviation of its sum from the ``mean'' $k^2$.
Let $C(I)$ denote the number of connected components of subgraph of the cycle $C_{rk}$ spanned by the vertex set $I$, that is, the number of disjoint consecutive (cyclic) intervals of $I$ in $\mathbb{Z}/(rk)\mathbb{Z}$.
The following lemma shows that every index set either spans many disjoint intervals or has large discrepancy.
\begin{lemma}\label{lem:DC}
  For every $I\subseteq\{0,1,\ldots,rk-1\}$ we have $D(I)+C(I)\cdot R^2\ge k/r-3R^4$.
\end{lemma}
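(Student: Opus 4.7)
The plan is to rewrite $\Sigma I$ in a form amenable to a diophantine argument, and then use the non-integrality of $\mu$ to force $D(I)$ to be large unless $C(I)$ compensates. First, split positions in $I$ by type: let $L(I)$ denote the number of ``large'' positions in $I$ (those with $b_i=k+x_{t_i}$) and $s(I)$ the number of ``small'' positions (those with $b_i=x_{t_i}$), so $L(I)+s(I)=|I|$ and $\Sigma I = kL(I)+\sum_{i\in I}x_{t_i}$. Next, approximate $\sum_{i\in I}x_{t_i}$ by $\mu|I|$: for a single cyclic interval $J$ of length $\ell$, each residue class modulo $r$ is represented in $J$ by either $\lfloor\ell/r\rfloor$ or $\lceil\ell/r\rceil$ positions, so $|\sum_{i\in J}x_{t_i}-\mu\ell|\le(r-1)R\le rR$. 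Summing over the $C:=C(I)$ cyclic intervals comprising $I$ yields a total error of at most $CrR\le CR^2$. Substituting $L(I)=|I|-s(I)$ then gives
\[
\Sigma I-k^2 \;=\; (k+\mu)|I|-ks(I)-k^2+e \;=:\; g(|I|,s(I))+e,\qquad |e|\le CR^2.
\]

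The heart of the proof is to show that $|g(N,s)|\ge k/r-2R^3$ for all nonnegative integers $N,s$ in the nontrivial regime $k>3rR^4$. For fixed $s$, the function $g(\cdot,s)$ runs through an arithmetic progression in $N$ with common difference $k+\mu$, vanishing at
\[
N^{*} \;=\; \frac{k(k+s)}{k+\mu} \;=\; (k+s)-\mu-\frac{\mu(s-\mu)}{k+\mu}.
\]
Since $k+s$ is an integer, the fractional part of $N^*$ differs from $\{-\mu\}=1-\{\mu\}$ by at most $\mu|s-\mu|/(k+\mu)\le r\mu^{2}/k\le rR^2/k$. Because $\mu=\frac{1}{r}\sum_j x_j$ has denominator $r$ and is non-integer by assumption, $\{\mu\}\in\{1/r,2/r,\ldots,(r-1)/r\}$, so $\min(\{\mu\},1-\{\mu\})\ge 1/r$. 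Therefore the distance from $N^*$ to the nearest integer is at least $1/r-rR^2/k$, and multiplying by $k+\mu$ gives
\[
\min_{N\in\mathbb{Z}}|g(N,s)| \;\ge\; (k+\mu)\Bigl(\tfrac{1}{r}-\tfrac{rR^2}{k}\Bigr) \;\ge\; \tfrac{k}{r}-2R^3,
\]
uniformly in integer $s\in[0,r\mu]$ (using $r\le R$ and $k\ge R^4$ in the nontrivial regime).

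Combining the two parts, $D(I)\ge|g(|I|,s(I))|-|e|\ge k/r-2R^3-CR^2$, hence $D(I)+C(I)R^2\ge k/r-2R^3\ge k/r-3R^4$, where the last inequality uses $R\ge r\ge 2$, so $2R^3\le 3R^4$. The complementary regime $k\le 3rR^4$ is handled trivially, since then $k/r-3R^4\le 0\le D(I)+C(I)R^2$. The main obstacle is the diophantine step above: one must verify that the perturbation $rR^2/k$ of the fractional part is strictly smaller than the $1/r$ gap coming from the non-integrality of $\mu$, uniformly over the possible integer values of $s\in[0,r\mu]$. Tracking how the parameters $r$, $R$, $\mu$ and $k$ interact in that estimate is precisely what dictates the $R^4$ correction appearing in the stated bound.
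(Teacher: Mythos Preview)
Your proof is correct and follows a genuinely different route from the paper's.

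The paper's argument first strips out the set $X$ of ``small'' positions (those $i$ with $b_i<k$) from $I$, obtaining $I'=I\setminus X$; since $|X|=r\mu\le R^2$ and each small $b_i\le R$, this perturbs $D$ by at most $R^3$ and $C$ by at most $R^2$. For $I'$ one then has the clean sandwich $(k+\mu)|I'|-C(I')R^2\le\Sigma I'\le(k+\mu)|I'|+C(I')R^2$, and a direct two-case split on whether $|I'|\le k-\lceil\mu\rceil$ or $|I'|\ge k-\lfloor\mu\rfloor$ (using $\lceil\mu\rceil-\mu\ge 1/r$ and $\mu-\lfloor\mu\rfloor\ge 1/r$) finishes.

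By contrast, you keep the small positions in play, encode their count as the parameter $s=s(I)$, and analyse the zero $N^{*}=k(k+s)/(k+\mu)$ of the linear function $g(\cdot,s)$. The key arithmetic input --- that $\mu$ has denominator $r$ and is non-integer, hence $\|\mu\|_{\mathbb Z}\ge 1/r$ --- is the same in both proofs, but you exploit it through a fractional-part perturbation estimate uniform in $s\in[0,r\mu]$, rather than through a case split after reducing to $s=0$. What this buys you is a single inequality $|g(N,s)|\ge k/r-2R^3$ valid for all integer $N$ and all admissible $s$, avoiding both the preprocessing step and the case analysis; the paper's version is more elementary in that it never needs to track how $N^{*}$ moves with $s$. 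Both routes arrive at essentially the same final constant.
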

\begin{proof}
  Let $X$ be the set of indices $i\in\{0,1,\ldots,rk-1\}$ satisfying $b^{k,\vect{x}}_i<k$.
  Note that $|X|=r\mu\le R^2$.
  Set $I'=I\sm X$ and note that $D(I)\ge D(I')-R^3$ and $C(I)\ge C(I')-R^2$.
  Observe that for every interval $Y\subseteq\{0,1,\ldots,rk-1\}$ there exists a set $J(Y)\subseteq \{0,\ldots,r-1\}$ and a nonnegative integer $\alpha(Y)$ such that
  $|\{ i\in Y \mid b^{k,\vect{x}}_i=j \}| = \alpha(Y) + \ind_{j\in J(Y)}$.
  Let $Y_1,\ldots,Y_{C(I')}$ be the connected components of $I'$.
  For every subset $J\subseteq\{0,\ldots,r-1\}$ let $A_J$ be the set of components $Y$ of $I'$ for which $J(Y)=J$.
  Note that for $Y\in A_J$ we have
  \[
    \Sigma Y
      =\left(k+\mu\right)|Y|
       + \sum_{j\in J}\left(j-\mu\right),
  \]
  hence
  \[
    \Sigma I' = \sum_J \sum_{Y\in A_J} \Sigma Y
      = \left(k+\mu\right)|I'|
       + \sum_Y\sum_{j\in J(Y)}\left(j-\mu\right).
  \]
  Note also that for each $Y$ we have
  $\left|\sum_{j\in J(Y)}\left(j-\mu\right)\right|\le rx_r \le R^2$, thus
  \[
    \left(k+\mu\right)|I'| - |C(I')|\cdot R^2
    \le \Sigma I' \le
    \left(k+\mu\right)|I'| + |C(I')|\cdot R^2.
  \]
  Suppose that $|I'|\le k-\ceil{\mu}$.
  Then
  \[
    \Sigma I' \le k^2 - k(\ceil{\mu}-\mu) - \mu\ceil{\mu} + C(I')\cdot R^2,
  \]
  hence $D(I') + C(I')\cdot R^2\ge k/r$.
  Suppose now that $|I'|\ge k-\floor{\mu}$.
  Then
  \[
    \Sigma I' \ge k^2 + k(\mu-\floor{\mu}) -\mu\floor{\mu} - C(I')\cdot R^2,
  \]
  hence $D(I') + C(I')\cdot R^2\ge k/r-R^2$.
  To conclude, we have
  \[
    D(I)+C(I)\cdot R^2
      \ge D(I') - R^3 + C(I')\cdot R^2 - R^4
      \ge k/r - R^2 - R^3 - R^4
      \ge k/r - 3R^4.\qedhere
  \]
\end{proof}

Assume from now that $k\gg R$ (more precisely, $R$ is constant and $k\to\infty$).
We proceed by constructing a clique cycle $G$ on $n=rk^2+rk=\Theta_r(k^2)$ vertices with $s_r(G)= \Omega_r(\sqrt{n})$ and $\Disc_r(G,\Tn)\le 1$.
The construction depends on a vector $\vect{x}=(x_1,\ldots,x_r)$ whose entries are strictly increasing nonnegative integers and for which $\mu=\mu(\vect{x})$ is not an integer.
Start with a vertex set $W=\{w_0,\ldots,w_{rk-1}\}$ of size $rk$ (the cycle), and let $A_0,\ldots,A_{rk-1}$ be vertex sets with the following properties:
{\bf (a)} $A_i\cap W=\{w_i,w_{i+1}\}$ for every $0\le i<rk$ (here and in the rest of this section, indices are taken modulo $rk$);
{\bf (b)} $A_i':=A_i\sm \{w_i,w_{i+1}\}$ are pairwise disjoint for $0\le i<rk$;
{and} {\bf (c)} $|A_i'|=b^{k,\vect{x}}_i$.
The vertex set of $G$ is $V(G)=A_0\cup\ldots\cup A_{rk-1}$ (hence, by~\eqref{eq:bi}, $|V(G)|=rk^2+rk$), and $G$ is obtained by letting each $A_i$ ($0\le i<rk$) be a clique, with no futher edges (see \cref{fig:cliquecycles}).
As mentioned earlier, $G$ is $2$-connected.
The proof of \cref{prop:tightness} would follow from the next two claims.

\begin{figure*}[t!]
  \captionsetup{width=0.879\textwidth,font=small}
  \centering
  \begin{subfigure}[t]{0.45\textwidth}
    \centering
    \includegraphics{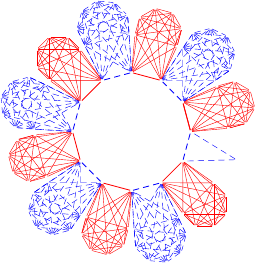}
    \caption{A $2$-coloured clique cycle on $84=2\cdot 6^2+2\cdot 6$ vertices, constructed using $\vect{x}=(0,1)$.}
    \label{fig:cliquecycle}
  \end{subfigure}%
  ~
  \begin{subfigure}[t]{0.05\textwidth}
    ~
  \end{subfigure}%
  ~
  \begin{subfigure}[t]{0.45\textwidth}
    \centering
    \includegraphics{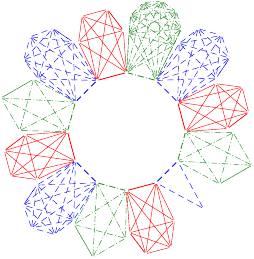}
    \caption{A $3$-coloured clique cycle on $60=3\cdot 4^2+3\cdot 4$ vertices, constructed using $\vect{x}=(0,1,3)$.}
    \label{fig:cliquecycle3}
  \end{subfigure}
  \caption{Clique cycles.}
  \label{fig:cliquecycles}
\end{figure*}

\begin{claim}
  $s_r(G)=\Omega_r(\sqrt{n})$.
\end{claim}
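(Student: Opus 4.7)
The plan is to assume, toward a contradiction, that $G$ admits a balanced $r$-separator $S$ with $s:=|S|$ satisfying $s\le \eps k$ for a small constant $\eps=\eps(r)>0$ (recall $n=rk^2+rk=\Theta_r(k^2)$), and to apply \cref{lem:DC} to the $r$ index sets naturally induced by $S$. Since each $A_i$ is a clique, $A_i\setminus S$ is either empty or contained in a single part $V_j$. Partition $\{0,\ldots,rk-1\}$ into $I_1,\ldots,I_r,I_*$, where $I_*=\{i:A_i\subseteq S\}$ and $I_j$ collects the indices with $\es\ne A_i\setminus S\subseteq V_j$. Write $n_j^W:=|V_j\cap W|$, $s^W:=|S\cap W|$, and $s_j^P:=|S\cap\bigcup_{i\in I_j}A_i'|$.

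The argument hinges on four observations. (a) A cycle vertex $w_i\in V_j$ forces both incident cliques $A_{i-1},A_i$ into $I_j$ (any other label on one side would put a vertex of $V_j$ in a disjoint part), so $w_i$ is interior to some $I_j$-interval and therefore $n_j^W\le |I_j|-C(I_j)$. (b) At most $r\mu\le rR$ indices $i$ have $b_i^{k,\vect{x}}<k$, so all remaining indices contribute at least $k$ to $\Sigma I_j$, giving $|I_j|\le \Sigma I_j/k+rR$. (c) Every label change along the cyclic arrangement $(A_0,\ldots,A_{rk-1})$ forces a distinct vertex of $S\cap W$, hence $\sum_{j}C(I_j)\le s^W$ (the all-one-label degenerate case forces $s=n$, trivially settling the claim). (d) Balancedness gives $|V_j|=k^2+k-s/r$; comparing with $|V_j|=\Sigma I_j+n_j^W-s_j^P$ yields the identity
\[
\Sigma I_j-k^2\;=\;k-s/r+s_j^P-n_j^W.
\]
Chaining (a), (b), and (d) yields the key uniform upper bound $n_j^W\le k+O_r(1)$ whenever $s\le k$, since $n_j^W\le |I_j|\le (k^2+k+s)/k+rR$.

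Now, applying \cref{lem:DC} to each $I_j$ and summing, using (c),
\[
\sum_{j=1}^r D(I_j)\;\ge\;k-3rR^4-R^2\!\sum_j C(I_j)\;\ge\;k-O_r(1)-R^2 s,
\]
which exceeds $k/2$ once $s\le k/(10R^2)$ and $k$ is large. On the other hand, (d) yields $\sum_j(\Sigma I_j-k^2)=-\Sigma I_*$, and since $i\in I_*$ implies $b_i+2=|A_i|\le s\le k$, forcing $b_i\le R$, one has $0\le\Sigma I_*\le rR^2=O_r(1)$. Therefore the positive and negative parts of $\{\Sigma I_j-k^2\}_j$ nearly cancel: the total negative part $\sum_{j:\,\Sigma I_j<k^2}(k^2-\Sigma I_j)$ is at least $k/4-O_r(1)$, and by pigeonhole some index $j^\star$ satisfies $n_{j^\star}^W\ge k+\frac{k}{4r}-s$. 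Comparing with the upper bound $n_{j^\star}^W\le k+O_r(1)$ forces $\frac{k}{4r}\le s+O_r(1)$, i.e.\ $s\ge c_r k=\Omega_r(\sqrt{n})$, contradicting the initial hypothesis.

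The main obstacle is bridging \cref{lem:DC}, which constrains the pair $(\Sigma I_j, C(I_j))$, with the separator structure, which a priori bounds only $n_j^W$ and $s^W$; the chain $n_j^W\le |I_j|-C(I_j)\le \Sigma I_j/k+rR$ from (a)+(b) is precisely this bridge, converting the lemma's dichotomy into a rigid upper bound on $n_j^W$ that is incompatible with the cancellation forced by $\Sigma I_*=O_r(1)$.
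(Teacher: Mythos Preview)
Your proof is correct, but it takes a considerably more elaborate route than the paper's. The paper's argument focuses on a \emph{single} index set: pick $j$ with $\Sigma I_j$ maximal (so $\Sigma I_j\ge k^2$, hence $D(I_j)=\Sigma I_j-k^2$), note that $C(I_j)\le 1+|S|$ since each interval boundary of $I_j$ pins a cycle vertex into $S$, and apply \cref{lem:DC} once to get $\Sigma I_j\ge k^2+k/r-(s+O_r(1))R^2$. Then the simple inclusion $\bigcup_{i\in I_j}A_i\subseteq V_j\cup S$ gives $|V_j|\ge \Sigma I_j+|I_j|-s$, which is compared directly against $|V_j|=k^2+k-s/r$ to read off $s=\Omega_r(k)$ in one line.

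Your approach instead tracks all $r$ index sets at once, sums the lemma, uses the exact cancellation $\sum_j(\Sigma I_j-k^2)=-\Sigma I_*=O_r(1)$ to force a large negative part, pigeonholes to find a single bad $j^\star$, and then closes the contradiction by comparing your refined upper bound $n_{j^\star}^W\le k+O_r(1)$ (coming from $n_j^W\le|I_j|\le\Sigma I_j/k+O_r(1)$) against the lower bound forced by (d). This is valid---your observations (a)--(d) are all correct, and the bookkeeping goes through---but it introduces machinery (the full partition into $I_1,\dots,I_r,I_*$, the quantities $n_j^W,s_j^P$, the positive/negative-part decomposition) that the paper avoids entirely by exploiting the asymmetry of choosing the \emph{maximal} $\Sigma I_j$ up front. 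The paper's route is shorter and more direct; yours gives a finer picture of how the balance constraints interact across all $r$ parts, which might be illuminating but is not needed for the claim.
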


\begin{proof}
  Let $V(G) = V_1 \cup \dots \cup V_r \cup S$ be a balanced $r$-separation
  and let $s:=|S|$.
  For $j\in [r]$ set $I_j$ to be the set of indices $0\le i<rk$ such that $A_i\cap V_j\ne\es$ (and hence $A_i\subseteq V_j\cup S$).
  We may assume without loss of generality that $\Sigma I_j = \sum_{i\in I_j}|A_i'|$ is maximised for $j=1$, and thus $\Sigma I_1 \ge k^2$ and $|I_1|\ge k^2/(k+R)= k (1-o(1))$.
  By \cref{lem:DC} we have that $\Sigma I_1 \ge k^2 + k/r - (C(I_1)+3R^2)R^2$.
  Since $C(I_1)\le 1+|S|$ we have that $\Sigma I_1 \ge k^2 + k/r - (s+4R^2)R^2$.
  It follows that
  \[
    k^2+k-\frac{s}{r} = \frac{n-s}{r} = |V_1| = |V_1\cup S|-|S| \ge \Sigma I_1 + |I_1| - s \ge k^2+\frac{k}{r}-(s+4R^2)R^2+k\cdot(1-o(1))-s,
  \]
  hence
  \[
    k^2+k \ge k^2 + k\cdot\left(\frac{1}{R}+1-o(1)\right)- s\cdot \left(1+R^2-\frac{1}{R}\right) - 4R^4,
  \]
  and therefore
  \[
    s \ge \left(k\cdot\left(\frac{1}{R} -o(1) \right) - 4R^4\right)
    \cdot
    \left(1+R^2-\frac{1}{R}\right)^{-1}
    \ge \frac{k}{R^4} = \Omega_r(\sqrt{n}),
  \]
  where the second inequality holds for large enough $k$.
\end{proof}

\begin{claim}
  $\Disc_r(G,\Tn)\le 1$.
\end{claim}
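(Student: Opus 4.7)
The plan is to exhibit an explicit $r$-colouring $f : E(G) \to [r]$ under which every spanning tree of $G$ has discrepancy exactly~$1$. The natural candidate is the \emph{clique colouring}: assign to every edge of the clique $A_i$ the colour $t_i + 1$, where $t_i = i \bmod r$. This is well-defined because the cliques $A_0, \ldots, A_{rk-1}$ partition $E(G)$: the only edges between two cycle vertices are of the form $\{w_i, w_{i+1}\}$, and each such edge lies in $A_i$ alone. With this colouring, colour class $j$ consists of the edges of the $k$ cliques with $t_i = j-1$, and the identity $\sum_{q=0}^{k-1} a_q^{k, x_{j-1}} = k^2$ yields $\sum_{i : t_i = j-1} |A_i| = k^2 + 2k$, so the colour classes are already balanced at the level of clique sizes.

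The second step is a structural analysis of an arbitrary spanning tree $T$ of $G$. For each $0 \le i < rk$ let $c_i$ be the number of connected components of the forest $T \cap E(A_i)$ regarded as a graph on the vertex set $A_i$; then $|T \cap E(A_i)| = |A_i| - c_i$. Summing over $i$, and using that each of the $rk$ cycle vertices lies in exactly two cliques (so $\sum_i |A_i| = n + rk$), we get $n-1 = |E(T)| = (n+rk) - \sum_i c_i$, hence $\sum_i c_i = rk + 1$. Since each $c_i \ge 1$ and there are exactly $rk$ cliques, a unique index $i^{*}$ satisfies $c_{i^{*}} = 2$ while $c_i = 1$ for every other $i$. (Intuitively: contracting each clique to a single point turns $G$ into the cycle $C_{rk}$, and a spanning tree of $G$ must break exactly one clique into two pieces in order to destroy that cycle.)

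Combining the two steps, the number of colour-$j$ edges in $T$ equals $\sum_{i : t_i = j-1}(|A_i| - c_i) = (k^2 + 2k) - k - \ind_{t_{i^{*}} = j-1} = k^2 + k - \ind_{t_{i^{*}} = j-1}$. Thus exactly one colour class of $T$ has size $k^2 + k - 1$ and the remaining $r-1$ classes have size $k^2 + k$; since the average colour-class size is $(n-1)/r = k^2 + k - 1/r$, we conclude $D_f(T) = r \cdot 1/r = 1$, as required. The argument is purely structural with no real obstacle beyond the counting identity that forces $\sum_i c_i = rk + 1$; the careful choice of clique sizes coming from \cref{eq:bi} is exactly what guarantees that the inevitable ``broken'' clique can shift only a single colour class by a single edge, yielding discrepancy $1$ rather than larger.
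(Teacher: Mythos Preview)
Your proof is correct and uses the same colouring as the paper, but your analysis of why this colouring works is different. The paper argues globally via colour components: it computes, for each colour $j$, the number $|\cC_j|$ of connected components of the colour-$j$ subgraph, finds $|\cC_j| = (r-1)n/r$, and then notes that any spanning tree contains at most $n - |\cC_j| = n/r$ edges of colour $j$. Your argument is instead a direct structural analysis of an arbitrary spanning tree: you show that exactly one clique must be ``broken'' into two pieces (via the counting identity $\sum_i c_i = rk+1$), and from this read off the exact colour profile of $T$, namely $r-1$ classes of size $k^2+k$ and one of size $k^2+k-1$. The paper's route is a bit slicker; yours is more explicit and in fact shows the discrepancy is \emph{exactly} $1$, not merely at most $1$. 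Both arguments use only the balance property $\sum_{q=0}^{k-1} a_q^{k,x} = k^2$ for each $x$, not the specific values of $\vect{x}$ --- those matter only for the companion claim about $s_r(G)$, so your closing remark about ``the careful choice of clique sizes'' slightly overstates what is needed here.
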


\begin{proof}
  Define a colouring $f:E(G)\to[r]$ as follows: $f(\{u,v\})=t_i$ if and only if $\{u,v\}$ is an edge of $G[A_i]$.
  We begin by calculating the number of components of each colour.
  Recall that we use $\C_j$ to denote the set of all colour-$j$ components.
  The number of components of colour $j\in[r]$ is the number of cliques coloured $j$ (there are $k$ such cliques), plus the number of vertices which are not in cliques coloured $j$.
  That is,
  \[
    |\cC_j| = k + \sum_{\substack{0\le i<rk\\t_i\ne j}} |A_i'| + (r-2)k
    = (r-1)k^2 + (r-1)k
    = \frac{r-1}{r}\cdot n.
  \]
  Observe that in every spanning tree of $G$, the number of edges of colour~$j$ is at most $n-|\cC_j|=n/r$.
  Thus $\Disc_r(G,\Tn)\le r\cdot(n/r-(n-1)/r)=1$.
\end{proof}

\section{Spanning-Tree Discrepancy in Random Regular Graphs}\label{sec:rrg}
In this section we prove \cref{prop:rrd_tight}.
We begin with the following useful fact, which appears as Corollary~2.15 in~\cite{BKS11}.
\begin{lemma}[\cite{BKS11}]\label{lem:rrg_edge-set_appearance}
  For every $d \geq 3$ there is $C = C(d)$ such that the following holds. Let $G \in \cG_{n,d}$, and let $E_0$ be a set of pairs of vertices of $G$ of size at most $0.49nd$. Then the probability that $E_0 \subseteq E(G)$ is at most
  $\left( \frac{C}{n} \right)^{|E_0|}$.
\end{lemma}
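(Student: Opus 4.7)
The plan is to reduce to the configuration (pairing) model and estimate directly there. The pairing model generates a random multigraph $G^*$ on $[n]$ by equipping each vertex with $d$ half-edges and choosing a uniformly random perfect matching of the resulting $nd$ half-edges. A classical result of Bollob\'as asserts that $\pr[G^* \text{ is simple}] \ge c(d) > 0$ for a constant depending only on $d$, and that conditioned on being simple, $G^*$ is distributed uniformly over $\cG_{n,d}$. Writing $m := |E_0|$, this immediately gives
\[
\pr[E_0 \subseteq E(G)] = \pr\bigl[E_0 \subseteq E(G^*) \bigm| G^* \text{ simple}\bigr] \le \frac{\pr[E_0 \subseteq E(G^*)]}{c(d)},
\]
so it suffices to bound the numerator.

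For the numerator I would run a union bound over ``witnesses'' for $E_0 \subseteq E(G^*)$: for each pair $\{u,v\} \in E_0$, a witness picks one half-edge at $u$ and one at $v$ that are to be matched. At a vertex $w$ incident to $\deg_{E_0}(w)$ pairs of $E_0$, the selected half-edges must be distinct, giving at most $d(d-1)\cdots(d-\deg_{E_0}(w)+1) \le d^{\deg_{E_0}(w)}$ options. Multiplying over vertices and using $\sum_w \deg_{E_0}(w) = 2m$ bounds the number of witnesses by $d^{2m}$. For any fixed witness, the probability that the $m$ designated pairs of half-edges are all matched by the uniform random pairing is exactly
\[
\prod_{k=1}^{m} \frac{1}{nd - 2k + 1}.
\]
Since $m \le 0.49\,nd$, every factor is at most $1/(0.02\,nd)$, so the product is bounded by $(50/(nd))^m$. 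The union bound then yields
\[
\pr[E_0 \subseteq E(G^*)] \le d^{2m}\cdot\left(\frac{50}{nd}\right)^m = \left(\frac{50d}{n}\right)^m.
\]

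Putting the two displays together gives $\pr[E_0 \subseteq E(G)] \le c(d)^{-1} (50d/n)^m$. The prefactor $c(d)^{-1} \ge 1$ is easily absorbed into the base of the exponential: for $m \ge 1$, $c(d)^{-1} (50d/n)^m \le (50 d \cdot c(d)^{-1}/n)^m$, so taking $C := 50 d \cdot c(d)^{-1}$ works (the case $m = 0$ is trivial). The only nontrivial input is the constant lower bound $\pr[G^* \text{ simple}] \ge c(d)$, which I would quote from the standard pairing-model literature rather than re-derive; everything else is routine combinatorial bookkeeping and I anticipate no real obstacle. The one place to be careful is that vertices may appear in many pairs of $E_0$, which is precisely why the witness count is bounded by a product of falling factorials, one per vertex, rather than a naive $(d^2)^m$.
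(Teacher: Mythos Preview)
Your argument is correct. The paper does not supply its own proof of this lemma: it simply quotes it as Corollary~2.15 of~\cite{BKS11}. Your configuration-model computation is precisely the standard derivation of this bound (and is essentially what underlies the cited result), so there is nothing to compare beyond noting that you have reproduced the expected proof rather than citing it.
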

We now use \cref{lem:rrg_edge-set_appearance} to show that \whp{}, all small enough subgraphs of $\cG_{n,d}$ are quite sparse; or, equivalently, that no small edge-set $E_0 \subseteq E(\cG_{n,d})$ spans few vertices.
\begin{lemma}\label{lem:rrg_small_subgraphs_sparse}
  For every $d \geq 3$ and $\eps > 0$ there is $\alpha = \alpha(d,\eps) > 0$ such that \whp{} $G \in \cG_{n,d}$ satisfies the following. For every $E_0 \subseteq E(G)$ of size at most $\alpha n$, the number of vertices spanned by $E_0$ is at least
  $(1 - \eps)|E_0|$.
\end{lemma}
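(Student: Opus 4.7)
The plan is to union-bound over potential bad configurations, leveraging \cref{lem:rrg_edge-set_appearance} to control the probability of any fixed edge-set appearing in $G$. Suppose some $E_0 \subseteq E(G)$ of size $m := |E_0| \leq \alpha n$ spans only $v < (1-\varepsilon) m$ vertices, and let $U$ be the set of endpoints of $E_0$, so $|U| = v$ and $E_0 \subseteq \binom{U}{2}$. Union-bounding over $U$ (at most $\binom{n}{v}$ choices) and over $E_0 \subseteq \binom{U}{2}$ (at most $\binom{\binom{v}{2}}{m}$ choices), and applying \cref{lem:rrg_edge-set_appearance} (whose hypothesis is met as long as $\alpha \le 0.49 d$), we get
\[
\pr[\text{bad}] \leq \sum_{v \geq 2}\ \sum_{m = \lceil v/(1-\varepsilon)\rceil}^{\alpha n}\binom{n}{v}\binom{\binom{v}{2}}{m}\left(\frac{C}{n}\right)^m,
\]
where $C = C(d)$ comes from the lemma.

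Next, I would estimate each summand using $\binom{n}{v} \leq (en/v)^v$ and, since $m \ge v$, $\binom{\binom{v}{2}}{m} \leq (ev/2)^m$, which together give $P_{v,m} \leq (en/v)^v (eCv/(2n))^m$. For fixed $v$, the sum over $m$ is a geometric series with ratio $eCv/(2n) \leq eC\alpha/2 \leq 1/2$ (for $\alpha$ sufficiently small), hence at most twice its first term. Substituting $m_0 := \lceil v/(1-\varepsilon)\rceil \geq v/(1-\varepsilon)$ and simplifying, one obtains a clean bound of the form
\[
\sum_{m \geq m_0} P_{v,m} \leq 2\, C_2^{v} (v/n)^{\eta v},
\]
where $\eta := \varepsilon/(1-\varepsilon) > 0$ and $C_2 = C_2(d,\varepsilon)$ is a constant.

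Finally, I would split the outer sum over $v$ at $v = \sqrt{n}$. For $2 \leq v \leq \sqrt{n}$, $(v/n)^\eta \leq n^{-\eta/2}$, so each term is at most $(C_2\, n^{-\eta/2})^v \leq n^{-\eta v/4}$ for $n$ large; summing the geometric series gives $O(n^{-\eta/2}) = o(1)$. For $\sqrt{n} < v \leq \alpha n$, choose $\alpha$ small enough that $C_2\alpha^\eta \leq 1/2$; then each term is at most $2^{-v} \leq 2^{-\sqrt{n}}$, and summing over at most $\alpha n$ values of $v$ still contributes $o(1)$. Combining the two regimes yields $\pr[\text{bad}] = o(1)$, so the desired property holds \whp{}.

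The main care needed is in the regime split: a uniform bound of the shape $2^{-m}$ across all $v$ would give only a constant (since the smallest admissible $v$ dominates), so one must exploit the polynomial-in-$n$ gain from the factor $(v/n)^{\eta v}$ for small $v$ and use the exponential-in-$v$ decay only once $v$ is sufficiently large. Apart from this splitting, the remaining computation is routine asymptotic bookkeeping.
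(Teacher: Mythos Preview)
Your proposal is correct and follows essentially the same first-moment argument as the paper: union-bound over vertex sets and edge sets, apply \cref{lem:rrg_edge-set_appearance}, and split the resulting sum at $\sqrt{n}$. The only organisational difference is that the paper parametrises the bad event by $m$ alone (taking the vertex set to have size exactly $\lfloor(1-\varepsilon)m\rfloor$, which suffices by monotonicity) and obtains the single-term bound $\bigl(e^2 C (m/n)^{\varepsilon}\bigr)^m$, whereas you keep both $v$ and $m$ and sum out $m$ as a geometric series first; the two computations are equivalent.
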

\begin{proof}
  Choose $\alpha = \alpha(d,\eps) > 0$ to satisfy
  $e^2 \cdot C \cdot \alpha^{\eps} \leq \frac{1}{2}$, where $C = C(d)$ is from \cref{lem:rrg_edge-set_appearance}.
  Fixing $1 \leq m \leq \alpha n$, let us estimate the probability that there is a set of less than $(1 - \eps)m$ vertices which span (at least) $m$ edges.
  By the union bound and \cref{lem:rrg_edge-set_appearance}, this probability is at most
  \begin{align*}
  \binom{n}{(1-\eps)m} \cdot \binom{\binom{(1-\eps)m}{2}}{m} \cdot \left(  \frac{C}{n} \right)^m &\leq
  \left(\frac{en}{(1-\eps)m} \right)^{(1-\eps)m} \cdot
  \left(\frac{em}{2}\right)^m \cdot \left(  \frac{C}{n} \right)^m
  \leq
  \left( \frac{e^2 \cdot C \cdot m^{\eps}}{n^{\eps}}\right)^m,
  \end{align*}
  where in the first inequality we used the estimate $\binom{n}{k} \leq \left( \frac{en}{k} \right)^k$ and the (obvious) fact that $\binom{(1-\eps)m}{2} \leq \frac{m^2}{2}$,
  and in the second inequality we used the bound $( \frac{1}{1-\eps} )^{1-\eps} \leq 2$, which holds for every $\eps \in (0,1)$.
  We conclude that the probability that there exists $E_0$ violating the statement of the lemma is at most
  \[
  \sum_{m = 1}^{\alpha n}{\left( \frac{e^2 \cdot C \cdot m^{\eps}}{n^{\eps}}\right)^m}.
  \]
  It is easy to see that for $m$ in the range $1 \leq m \leq \sqrt{n}$, say, the corresponding sum is $o(1)$. As for the range $\sqrt{n} \leq m \leq \alpha n$, we use our choice of $\alpha$ to obtain
  \[
  \sum_{m = \sqrt{n}}^{\alpha n}
  {\left( \frac{e^2 \cdot C \cdot m^{\eps}}{n^{\eps}}\right)^m}
  \leq
  \sum_{m = \sqrt{n}}^{\alpha n}
  {\left( e^2 \cdot C \cdot \alpha^{\eps}\right)^m}
  \leq
  \sum_{m = \sqrt{n}}^{\alpha n}{\left( \frac{1}{2} \right)^m} = o(1),
  \]
  as required. This completes the proof of the lemma.
\end{proof}

Another fact we need regarding random regular graphs concerns the distribution of short cycles. It easily follows from \cref{lem:rrg_edge-set_appearance} that for every fixed $k$, the expected number of $k$-cycles in $\cG_{n,d}$ can be upper bounded by a function of $k$. (In fact, much more precise results are known; see, e.g., Theorem~2.5 in~\cite{Wor99} and the references therein.)
Using Markov's inequality, we obtain the following (relatively weak, though sufficient for our purposes) fact:
\begin{lemma}\label{lem:rrg_short_cycles}
  For every fixed $k$, the number of cycles of length at most $k$ in $\cG_{n,d}$ is $o(n)$ \whp{}.
\end{lemma}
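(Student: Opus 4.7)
The plan is exactly the approach suggested by the hint preceding the statement: bound the expectation by a function of $k$ (and $d$) via \cref{lem:rrg_edge-set_appearance}, and then finish with Markov's inequality. Since $k$ is fixed, a crude counting argument suffices.

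First, for each $3 \leq \ell \leq k$, I would count the number of potential $\ell$-cycles on the vertex set $[n]$. An $\ell$-cycle is determined by an ordered sequence of $\ell$ distinct vertices up to cyclic rotation and reflection, so the number of such potential cycles is at most $\frac{n(n-1)\cdots(n-\ell+1)}{2\ell} \leq n^{\ell}$. Each such cycle corresponds to a prescribed set $E_0$ of $\ell$ pairs of vertices; since $\ell$ is a constant, certainly $|E_0| \leq 0.49 nd$ for large $n$, so \cref{lem:rrg_edge-set_appearance} applies and yields $\pr[E_0 \subseteq E(G)] \leq (C/n)^{\ell}$, with $C=C(d)$.

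Let $X_{\ell}$ denote the number of $\ell$-cycles in $G \in \cG_{n,d}$. Then by the union bound,
\[
\E[X_{\ell}] \leq n^{\ell} \cdot \left( \frac{C}{n} \right)^{\ell} = C^{\ell}.
\]
Summing over $3 \leq \ell \leq k$, if $X := \sum_{\ell=3}^{k} X_{\ell}$ denotes the total number of cycles of length at most $k$, then
\[
\E[X] \leq \sum_{\ell=3}^{k} C^{\ell} =: M(k,d),
\]
a quantity depending on $k$ and $d$ alone, and in particular independent of $n$.

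To conclude, I would apply Markov's inequality with a slowly growing threshold, say $\omega(n) = \sqrt{n}$: indeed,
\[
\pr\!\left[ X \geq \sqrt{n} \right] \leq \frac{M(k,d)}{\sqrt{n}} = o(1),
\]
so \whp{} we have $X \leq \sqrt{n} = o(n)$, as required. There is no real obstacle here: the only mild point to check is that the edge set of any $\ell$-cycle for $\ell \leq k$ is well within the size bound $0.49 nd$ of \cref{lem:rrg_edge-set_appearance}, which is automatic once $n$ is large relative to the fixed constant $k$.
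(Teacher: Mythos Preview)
Your proposal is correct and follows exactly the approach the paper itself suggests: bound the expected number of short cycles by a constant depending only on $k$ and $d$ via \cref{lem:rrg_edge-set_appearance}, and then apply Markov's inequality. The paper does not give a more detailed argument than this sketch, so your write-up is essentially a fleshed-out version of what is indicated there.
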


We now combine \cref{lem:rrg_small_subgraphs_sparse,lem:rrg_short_cycles} to show that \whp{}, every small enough edge-set in $\cG_{n,d}$ can be made into (the edge-set of) a forest by omitting only a small fraction of its elements.
\begin{lemma}\label{lem:rrg_almost_forest}
  For every $d \geq 3$ and $\eps > 0$ there is $\beta = \beta(d,\eps) > 0$ such that \whp{} $G \in \cG_{n,d}$ satisfies the following. For every $E^* \subseteq E(G)$ of size at most $\beta n$, there is $F \subseteq E^*$, $|F| \geq (1-\eps)|E^*|$, such that $F$ is the edge-set of a forest.
\end{lemma}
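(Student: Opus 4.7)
The plan is to bound the cyclomatic number $r(G^*) := |E^*| - |V^*| + c^*$ of the subgraph $G^* = (V^*, E^*)$ spanned by $E^*$: any spanning forest of $G^*$ has exactly $|V^*| - c^*$ edges, so producing $F$ with $|F| \geq (1-\varepsilon)|E^*|$ reduces to showing $r(G^*) \leq \varepsilon |E^*|$.

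I would first apply \cref{lem:rrg_small_subgraphs_sparse} with parameter $\varepsilon/3$, obtaining $\alpha = \alpha(d,\varepsilon) > 0$ such that, whp, every $E_0 \subseteq E(G)$ with $|E_0| \leq \alpha n$ spans at least $(1 - \varepsilon/3)|E_0|$ vertices, and then \cref{lem:rrg_short_cycles} with $k := \lceil 6/\varepsilon \rceil$ to guarantee that, whp, $G$ contains only $o(n)$ cycles of length at most $k$. Set $\beta := \alpha$ and fix any $E^*$ with $|E^*| \leq \beta n$.

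For the main estimate, write $r(G^*) = \sum_C (e(C) - v(C) + 1)$ over connected components $C$ of $G^*$, noting that only non-tree components contribute. This gives $r(G^*) = (|E^*|-|V^*|) + c^*_{\mathrm{nt}}$, where $c^*_{\mathrm{nt}}$ is the number of non-tree components. The first summand is at most $(\varepsilon/3)|E^*|$ by the sparsity bound. To control $c^*_{\mathrm{nt}}$, split the non-tree components according to whether they contain a cycle of length at most $k$: the number of components of the former type is bounded by the total count of short cycles in $G$, hence by $o(n)$; each component of the latter type has girth greater than $k$ and therefore at least $k$ vertices, so their count is at most $|V^*|/k \leq 2|E^*|/k \leq (\varepsilon/3)|E^*|$. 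Summing gives $r(G^*) \leq (2\varepsilon/3)|E^*| + o(n)$, which is at most $\varepsilon |E^*|$ whenever $|E^*|$ is not too small relative to the additive error.

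The main obstacle is absorbing the $o(n)$ slack for $E^*$ of order far below $n$. I would therefore upgrade \cref{lem:rrg_short_cycles} using \cref{lem:rrg_edge-set_appearance} directly: the expected number of cycles of any fixed length $\ell$ in $\cG_{n,d}$ is $O_\ell(1)$, so by Markov, whp $G$ contains only $O_k(1)$ short cycles in total, a bound independent of $n$. Shrinking $\beta$ suitably absorbs this constant into the sparsity slack, yielding $r(G^*) \leq \varepsilon|E^*|$ uniformly above some constant threshold for $|E^*|$; the finitely many remaining small $|E^*|$ cases are handled by direct inspection, using the same short-cycle input.
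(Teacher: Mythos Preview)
Your cyclomatic-number approach is a clean alternative to the paper's (the paper instead peels degree-$1$ vertices, then uses the sparsity bound to reduce to maximum degree $2$, and only then counts cycles), but the key identity you write down is wrong. You claim $r(G^*) = (|E^*|-|V^*|) + c^*_{\mathrm{nt}}$, whereas in fact $r(G^*) = (|E^*|-|V^*|) + c^*$ with $c^*$ counting \emph{all} components; equivalently, your expression equals $r(G^*) - c^*_{\mathrm{tree}}$, so it underestimates $r(G^*)$. To see that this matters, take $E^*$ to be a copy of $K_4$ together with many disjoint single edges: the isolated edges drag $|E^*|-|V^*|$ down (it becomes negative), while the $K_4$ alone already contributes $3$ to $r(G^*)$. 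Bounding your expression by $\varepsilon|E^*|$ therefore does not bound $r(G^*)$.

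The fix is easy and keeps your approach intact: apply \cref{lem:rrg_small_subgraphs_sparse} to each non-tree component separately. For every such $C$ one has $|E(C)|\le|E^*|\le\alpha n$, hence $v(C)\ge(1-\varepsilon/3)e(C)$ and $e(C)-v(C)+1\le(\varepsilon/3)e(C)+1$; summing over non-tree components gives $r(G^*)\le(\varepsilon/3)|E^*|+c^*_{\mathrm{nt}}$, and your bound on $c^*_{\mathrm{nt}}$ then goes through. A secondary issue: Markov's inequality does \emph{not} yield ``\whp{} only $O_k(1)$ short cycles'' --- the short-cycle count converges in distribution to a Poisson with positive mean, so no fixed constant works \whp{}; you only get that the count is at most any slowly growing function of $n$. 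This leaves the very-small-$|E^*|$ regime unhandled, but the paper's proof has precisely the same slack (its inequality $(1-\varepsilon/3)|E_1|-o(n)\ge(1-\varepsilon/2)|E_1|$), and indeed the lemma as stated is mildly inaccurate for tiny $E^*$ (a triangle forces $\varepsilon\ge 1/3$). This is irrelevant to the application in \cref{prop:rrd_tight}, where only $|E^*|=\Theta(n)$ is used.
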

\begin{proof}
  We prove the lemma with $\beta = \beta(d,\eps) = \alpha(d,\frac{\eps}{4})$, where $\alpha$ is from \cref{lem:rrg_small_subgraphs_sparse}.
  Letting $G \in \cG_{n,d}$, we assume that $G$ satisfies the assertions of \cref{lem:rrg_small_subgraphs_sparse} with parameter $\frac{\eps}{2}$ and of
  \cref{lem:rrg_short_cycles} with parameter $k := \frac{3}{\eps}$, and show that under these assumptions, $G$ satisfies the assertion of \cref{lem:rrg_almost_forest}.
  So fix any $E^* \subseteq E(G)$ of size $|E^*| \leq \beta n$.
  Initialise $F$ to $\es$.
  As long as the subgraph spanned by $E^*$ contains a vertex of degree $1$, we delete the edge incident to such a vertex and put it in $F$. Evidently, no edge of this type can be contained in any cycle consisting only of edges from $E^*$. Let $E_0$ be the set of remaining edges at the end of this process, and let $G_0$ be the subgraph spanned by $E_0$. Evidently, $|E_0| \leq |E^*| \leq \beta n \leq \alpha(d,\frac{\eps}{4})n$, so by  \cref{lem:rrg_small_subgraphs_sparse}, $|V(G_0)| \geq (1-\frac{\eps}{4})|E_0|$. Hence, we have
  \begin{equation}\label{eq:rrg-high_degree_vertices}
  \sum_{v \in V(G_0)}{(d_{G_0}(v) - 2)} =
  2|E_0| - 2|V(G_0)| \leq \frac{\eps}{2} |E_0|.
  \end{equation}
  Next, note that by the definition of $E_0$, the minimum degree in $G_0$ is at least $2$, meaning that all summands on the left-hand side in \eqref{eq:rrg-high_degree_vertices} are nonnegative. It now follows from \eqref{eq:rrg-high_degree_vertices} that by deleting at most $\frac{\eps}{2}|E_0|$ edges, we can turn $G_0$ into a graph of maximum degree at most $2$.
  In other words, there exists $E_1 \subseteq E_0$, $|E_1| \geq (1 - \frac{\eps}{2})|E_0|$, such that the maximum degree in the subgraph $G_1$ spanned by $E_1$ is at most $2$. This means that every connected component in $G_1$ is either a cycle or a path. By \cref{lem:rrg_short_cycles}, the number of cycles of length at most $\frac{3}{\eps}$ is $o(n)$. Moreover, the number of cycles of length larger than $\frac{3}{\eps}$ is clearly less than
  $\frac{\eps}{3} |E_1|$. By omitting one edge from each cycle, we obtain an edge-set $E_2 \subseteq E_1$ such that
  $|E_2| \geq (1 - \frac{\eps}{3})|E_1| - o(n) \geq
  (1 - \frac{\eps}{2})|E_1| \geq
  (1 - \frac{\eps}{2})^2|E_0| \geq
  (1 - \eps)|E_0|$, and such that $E_2$ spans a forest. Placing all elements of $E_2$ into $F$, we see that $F$ is a subset of $E^*$ which spans a forest and has size at least
  $|E^* \setminus E_0| + |E_2| \geq
  |E^* \setminus E_0| + (1 - \eps)|E_0| \geq
  (1 - \eps)|E^*|$, as required.
\end{proof}

\begin{proof}[Proof of \cref{prop:rrd_tight}]
  Let $G \in \cG_{n,d}$.
  The upper bound
  $\Disc_r(G,\cT_n) \leq
  \frac{dn}{2} - (n-1) = \linebreak
  \left( \frac{d}{2} - 1 \right) \cdot n + 1$ follows simply by colouring $E(G)$ evenly, namely, such that each colour class has size $\frac{|E(G)|}{r} = \frac{dn}{2r}$. For the lower bound, we show that for every $\eps > 0$, there is $r_0(\eps)$ such that if $r \geq r_0(\eps)$ then \whp{}
  $\Disc_r(G,\cT_n) \geq (d/2 - 1 - \eps)n$. Equivalently, we need to show that \whp{}, in every $r$-colouring of $E(G)$ there is a spanning tree with at least
  $\left( \frac{d}{2r} - \frac{\eps}{r} \right)n$ edges of the same colour. We choose $r_0 = r_0(\eps)$ so that $\frac{d}{2r_0} \leq \beta(d,\frac{2\eps}{d})$, where $\beta$ is from \cref{lem:rrg_almost_forest}. Suppose that $G$ satisfies the assertion of \cref{lem:rrg_almost_forest} (this happens \whp{}). Let $r \geq r_0$, and fix any $r$-colouring of the edges of $G$. By averaging, there is a colour, say $1$, appearing on at least $\frac{|E(G)|}{r} = \frac{dn}{2r}$ of the edges. Fix a set $E^* \subseteq E(G)$ of size $\frac{dn}{2r}$ of edges of colour~$1$. Our choice of $r_0$ guarantees that $|E^*| \leq \beta(d,\frac{2\eps}{d}) \cdot n$. By \cref{lem:rrg_almost_forest}, there is $F \subseteq E^*$  which spans a forest and has size
  $|F| \geq (1 - \frac{2\eps}{d})|E^*|$. By connecting the connected components of this forest (using edges of $G$), we obtain a spanning tree of $G$ with at least
  $|F| \geq (1 - \frac{2\eps}{d})|E^*| =
  (1 - \frac{2\eps}{d}) \cdot \frac{dn}{2r} = \left( \frac{d}{2r} - \frac{\eps}{r} \right)n$ edges of colour~$1$, as required.
\end{proof}

Observe that the only (typical) properties of $\cG_{n,d}$ used in the above proof are that every sufficiently small (linear) edge-set $E_0$ spans at least $(1 - o(1))|E_0|$ vertices (the precise statement is given in \cref{lem:rrg_small_subgraphs_sparse}), and that there are $o(n)$ short cycles (see \cref{lem:rrg_short_cycles}).
Hence, every graph which satisfies the assertions of \cref{lem:rrg_small_subgraphs_sparse,lem:rrg_short_cycles} also satisfies the conclusion of \cref{prop:rrd_tight}. One example of such a graph is the giant component of (a typical) $G(n,p)$ with $p = \frac{d}{n}$ (for $d>1$ and $r$ large enough in terms of $d$).

\section{Proof of Corollaries}
\label{sec:strees:app}

To prove the upper bounds in \cref{cor:hypercube,cor:grid}, it will be convenient to use the following observation.

\begin{observation}\label{obs:ub}
  Let $G=(V,E)$ be a connected graph, and let $V=L_1\cup\ldots\cup L_k$ be a partition of its vertex set such that $|L_i|\le D$ for $i=1,\ldots,k$ and $E(L_i,L_j)=\es$ whenever $1\le i<j-1< k$.
  Then $s_r(G)\le 3rD$.
\end{observation}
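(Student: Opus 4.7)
\medskip

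\noindent\textbf{Proof proposal.} The plan is a direct greedy balancing argument that exploits the ``path-like'' structure of the layering. Let $n:=|V|$. If $D\ge n/r$, we may trivially take $S:=V$, which gives $s_r(G)\le n\le rD\le 3rD$, so from now on I will assume $D<n/r$. Define the cumulative sums $s(i):=|L_1|+\dots+|L_i|$ for $0\le i\le k$. For each $j=1,\dots,r-1$, let $a_j$ be the smallest index with $s(a_j)\ge jn/r$. Since $|L_i|\le D<n/r$, we have $s(a_j)<jn/r+n/r$, so $a_1<a_2<\dots<a_{r-1}$ are strictly increasing (hence distinct).

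I will declare $L_{a_1},\dots,L_{a_{r-1}}$ to be \emph{separator layers}, and the remaining layers split into $r$ (possibly empty) blocks $B_1:=L_1\cup\dots\cup L_{a_1-1}$, $B_j:=L_{a_{j-1}+1}\cup\dots\cup L_{a_j-1}$ for $2\le j\le r-1$, and $B_r:=L_{a_{r-1}+1}\cup\dots\cup L_k$. The assumption that $E(L_i,L_{i'})=\es$ whenever $|i-i'|\ge 2$ immediately guarantees that there are no edges in $G$ between distinct blocks $B_j$, $B_{j'}$, because any two such blocks are separated by at least one full separator layer.

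The next step is to lower-bound the block sizes using the per-layer bound $|L_{a_j}|\le D$. By definition of $a_j$ we have $s(a_j-1)<jn/r\le s(a_j)\le s(a_j-1)+D$, whence $s(a_j-1)\ge jn/r-D$; analogously $s(a_{j-1})<(j-1)n/r+D$. Therefore
\[
|B_j|\;=\;s(a_j-1)-s(a_{j-1})\;\ge\;\tfrac{n}{r}-2D
\]
for $2\le j\le r-1$, while $|B_1|,|B_r|\ge n/r-D$. Setting $q:=\min_j|B_j|$ (an integer, as all block sizes are integers), for each $j$ choose $V_j\subseteq B_j$ of size exactly $q$, and let $S:=V\setminus(V_1\cup\dots\cup V_r)$. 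Then $|V_1|=\dots=|V_r|=q$, all separator layers lie in $S$, and hence $V_1,\dots,V_r,S$ form a balanced $r$-separation of $G$. Its size is
\[
|S|\;=\;n-rq\;\le\;n-r\!\left(\tfrac{n}{r}-2D\right)\;=\;2rD\;\le\;3rD,
\]
as required.

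There is essentially no serious obstacle here; the only thing to watch out for is degenerate cases (a layer so large that $D\ge n/r$, or empty blocks $B_j$ when two cumulative thresholds are crossed by the same layer). The first is disposed of at the start by taking $S=V$, and the second does not affect any inequality since we only use the lower bound $|B_j|\ge n/r-2D$ (which is trivially satisfied, possibly vacuously, when the right-hand side is negative). The slack between the bound $2rD$ that the argument yields and the $3rD$ in the statement absorbs all rounding considerations.
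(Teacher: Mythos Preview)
Your proof is correct and follows essentially the same approach as the paper: both arguments use the cumulative layer sizes to locate $r-1$ ``cut'' layers, obtain $r$ blocks each of size at least $n/r-2D$ with no edges between distinct blocks, and then level the blocks to form a balanced $r$-separation. Your presentation is slightly more explicit with the cumulative sums and the handling of edge cases, and you observe (as does the paper implicitly) that the argument in fact yields the sharper bound $|S|\le 2rD$.
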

\begin{proof}
  We will call the sets $L_1,\dots,L_k$ \defn{layers}.
  We may assume $2D<n/r$, otherwise the statement is trivial.
  A balanced $r$-separator of size $(1+r)D$ can be obtained as follows.
  Let $n=|V|$ and let $\sigma:V\to[n]$ be an ordering of the vertices of $G$ satisfying $\sigma(u)<\sigma(v)$ whenever $u\in L_i$ and $v\in L_j$ for $i<j$.
  For $j=1,\ldots,r-1$ let $L_{i_j}$ be the layer containing the vertex labelled $\floor{jn/r}$.
  Set $V'=V\sm\bigcup_j L_{i_j}$, and observe that $V'$ consists of $r$ parts sized between $n/r-2D$ and $n/r$ each, with no edges between distinct parts.
  We now level all parts by deleting the total of at most $2rD$ vertices, and set $S$ to be the set of vertices outside these parts.
  $S$ is a balanced $r$-separator of $G$ with $|S|\le 3rD$.
\end{proof}

We remark that the ``right'' notion to use here is that of a \defn{bandwidth} of a graph (see, e.g., in~\cite{EncOfAlg}).
The bandwidth of an $n$-vertex graph $G$ is the minimum $D$ such that there exists an ordering $\sigma:V(G)\to[n]$ of the vertices for which $|\sigma(u)-\sigma(v)|\le D$ for every edge $\{u,v\}$, namely,
\[
  \bw(G) = \min_\sigma \max_{\{u,v\}\in E(G)}|\sigma(u)-\sigma(v)|,
\]
where the minimum is over all orderings $\sigma$ of $V(G)$.
Using this terminology, the statement of \cref{obs:ub} can be simplified as follows:
for every graph $G$ with $\bw(G)\le D$ it holds that $s_r(G)\le 3rD$.

\paragraph{The hypercube}
Here we prove \cref{cor:hypercube}.
Identify the vertices of $Q_d$ with the set $\{0,1\}^d$ in the obvious way.
Denote by $L_0,\ldots,L_d$ the layers of the hypercube, namely, $L_i=\{(x_1,\ldots,x_d)\in\{0,1\}^d\mid \sum_{j=1}^d x_j= i\}$. Evidently, $|L_i| = \binom{d}{i}$.
For the lower bound in \cref{cor:hypercube}, we shall need the following lemma.

\begin{lemma}\label{lem:hypercube:iso}
  For every $\alpha>0$ there exists $\beta>0$ such that for every set $U$ of size at least $\alpha\cdot 2^d$ it holds that $|N(U)|\ge \beta\cdot 2^d/\sqrt{d}$.
\end{lemma}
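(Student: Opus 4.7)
The plan is to invoke Harper's classical vertex-isoperimetric inequality for the hypercube~\cite{Har66}, which asserts that for any $U \subseteq \{0,1\}^d$, the size of the closed neighborhood $|U \cup N(U)|$ is minimized, among sets of the same size, by the initial segment of the simplicial order on $\{0,1\}^d$. We may and will assume $\alpha \leq 1/2$ and $|U| \leq 2^d/2$: the lemma will be used only to estimate the vertex-isoperimetric constant $\iota$, which concerns sets of size at most $|V|/2$.

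Let $r$ be the unique integer with $\sum_{i=0}^{r-1}\binom{d}{i} < |U| \leq \sum_{i=0}^{r}\binom{d}{i}$, and write $|U| = \sum_{i=0}^{r-1}\binom{d}{i} + s$ with $0 \leq s \leq \binom{d}{r}$. The initial segment $I_{|U|}$ consists of $L_0 \cup \dots \cup L_{r-1}$ together with $s$ vertices of $L_r$; its closed neighborhood is $L_0 \cup \dots \cup L_r \cup T$, where $T \subseteq L_{r+1}$ is the upper shadow of those $s$ vertices. A one-line double-count of edges between $L_r$ and $L_{r+1}$ shows that $|T|(r+1) \geq s(d-r)$, and for $r \leq d/2 + 1$ this yields $|T| \geq s/3$ (for $d$ large enough; small $d$ are trivial). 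Harper's inequality then gives
\[
|N(U)| \;\geq\; \binom{d}{r} - s + |T| \;\geq\; \binom{d}{r} - \tfrac{2s}{3} \;\geq\; \tfrac{1}{3}\binom{d}{r}.
\]

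It remains to show $\binom{d}{r} = \Omega_\alpha(2^d/\sqrt{d})$, which requires locating $r$ near $d/2$. The upper bound $|U| \leq 2^d/2$ already gives $r \leq d/2 + 1$. For the lower bound, $|U| \geq \alpha \cdot 2^d$ translates into $\pr[\Bin(d,\tfrac{1}{2}) \leq r] \geq \alpha$, and a standard Chernoff estimate forces $r \geq d/2 - c\sqrt{d}$ for some $c = c(\alpha) > 0$. Hence $r$ lies in an $O_\alpha(\sqrt{d})$ window around $d/2$, and Stirling's approximation yields $\binom{d}{r} = \Theta_\alpha(2^d/\sqrt{d})$, completing the proof with $\beta = \beta(\alpha) > 0$.

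The only mildly delicate point is that Harper bounds the closed neighborhood of the extremal initial segment rather than the open boundary directly; the elementary shadow count above bridges this gap without having to invoke the full strength of Kruskal--Katona, which is why the argument remains a short computation.
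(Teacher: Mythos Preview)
Your proof is correct and follows essentially the same route as the paper's: reduce to Hamming balls via Harper's theorem, use a Chernoff bound to place the radius within $O_\alpha(\sqrt{d})$ of $d/2$, and then estimate the relevant binomial coefficient by Stirling. Your upper-shadow double count making the constant $\tfrac{1}{3}$ explicit is a more careful version of the paper's one-line claim that $|N(U)|$ is at least asymptotically half of $|L_k|$; your added restriction $|U|\le 2^{d-1}$ is also appropriate, since the lemma (as stated and as used) is only meant for sets of at most half the vertices.
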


\begin{proof}
  Fix $\alpha>0$ and let $U$ be a vertex set with $|U|\ge \alpha\cdot 2^d$.
  A \defn{Hamming ball} (with centre $\mathbf{0}$ and \defn{radius} $k$) in $Q_d$ is a vertex set of the form $L_0\cup L_1\ldots\cup L_{k-1}\cup L_k'$ for some $0\le k\le d$ and $\es\ne L_k'\subseteq L_k$.
  A classical result by Harper (\cite{Har66}, see also, e.g.,\ Theorem~31 of~\cite{GP}) implies that the vertex boundary of sets of a given size is minimised by Hamming balls.
  Thus, we may assume $U$ is a Hamming ball (of radius $k$).
  Note that (e.g.,~by Chernoff bounds) $k\ge d/2-\alpha'\sqrt{d}$ for some $\alpha'=\alpha'(\alpha)$.
  Now, note that $|N(U)|$ is at least asymptotically half of the size of $L_k$, which is at least $\beta\cdot 2^d/\sqrt{d}$ for some $\beta=\beta(\alpha)$, as required.
\end{proof}

\begin{proof}[Proof of \cref{cor:hypercube}]
  From \cref{lem:hypercube:iso} we see that $s_r(Q_d)\succeq 2^d/\sqrt{d}$ (using the same logic as in the proof of \cref{cor:iso}).
  Thus, as $Q_d$ is $2$-connected (for $d\ge 2$), \cref{thm:trees} implies that $\Disc_r(Q_d,\Tn)\succeq 2^d/\sqrt{d}$.
  The upper bound is obtained by combining \eqref{eq:ub} with \cref{obs:ub} and by noting that $|L_i|\preceq 2^d/\sqrt{d} = n/\sqrt{\log n}$ for every $i=0,\ldots,d$.
\end{proof}

\paragraph{The grid}
Here we prove \cref{cor:grid}.
Identify the vertices of $P_k^d$ with the set $[k]^d$ in the obvious way.
Denote by $L_1,\ldots,L_k$ the $k$ layers of the grid, each spans a copy of $P_k^{d-1}$, namely, $L_i=\{(x_1,\ldots,x_{d-1},i)\mid x_j\in [k],\ j=1,\ldots,d-1\}$.
\begin{proof}[Proof of \cref{cor:grid}]
  From~\cite{BL91v} (see also~\cite{WW77}) we know that $\iota(P_k^d)\succeq k^{d-1}$.
  The lower bound for $d\ge 3$ thus follows from \cref{cor:iso} by noting that $P_k^d$ is $3$-connected.
  For $d=2$, let $P_k^+$ be obtained from $P_k^2$ by adding a cycle through the ``corner'' vertices $(1,1),(1,k),(k,k),(k,1)$.
  Note that any spanning tree of $P_k^+$ contains at most $3$ edges which are not in $P_k^2$.
  As $P_k^+$ is clearly $3$-connected, we have by \cref{cor:iso} that $\Disc_r(P_k^2,\Tn)\ge \Disc_r(P_k^+,\Tn)-3r \succeq k$.
  The upper bound (for $d\ge 2$) is obtained from the combination of \eqref{eq:ub} and \cref{obs:ub} by noting that $|L_i|=k^{d-1}$ for every $i=1,\ldots,k$.
\end{proof}

\section{Spanning-Tree Discrepancy in \texorpdfstring{$\beta$-Graphs}{Beta-Graphs}}
\label{sec:beta}
\paragraph{Proof of \cref{thm:beta}}
Fix any $\eps > 0$.
Set $m = \lceil 2^{r+1}/\eps \rceil$ and
$\beta = \beta(r,\eps) := \frac{\eps}{16rm}$.
Let $G$ be a connected $n$-vertex $\beta$-graph, and fix any $r$-colouring of the edges of $G$. Recall that our goal is to show that there is a spanning tree of $G$ with at least $(\varphi(r) - \eps)n$ edges of the same colour. It will be convenient to assume that $m$ divides $n$. If not, then take a connected induced subgraph $G'$ of $G$ on $n' := \lfloor n/m \rfloor \cdot m > n - m$ vertices. Then $G'$ is a $2\beta$-graph because $2\beta n' \geq \beta n$. Also, a spanning tree of $G'$ with at least $(\varphi(r) - \eps)n'$ edges of the same colour can be extended to a spanning tree of $G$ with at least $(\varphi(r) - \eps)n' \geq (\varphi(r) - 2\eps)n$ edges of the same colour. So it suffices to prove the assertion in the case that $m$ divides $n$.

  For each $1 \leq i \leq r$, let $G_i$ be the graph on $V(G)$ consisting of the edges coloured with colour~$i$.
  Connected components of $G_i$ will be called \defn{colour-$i$ components}, and their number will be denoted by $\ell_i$.
  Suppose first that $\ell_i \leq (1 - \varphi(r) + \eps) \cdot n$ for some $1 \leq i \leq r$. In this case, take a spanning forest of each colour-$i$ component, and connect these components using $\ell_i - 1$ edges to obtain a spanning tree $T$ of $G$ (this is possible since $G$ is connected). The number of edges of $T$ of colour~$i$ is at least
  $n - 1 - (\ell_i - 1) \geq n - (1 - \varphi(r) + \eps) \cdot n = (\varphi(r) - \eps) \cdot n$, as required.
  So we see that in order to complete the proof, it suffices to rule out the possibility of having $\ell_i > (1 - \varphi(r) + \eps) \cdot n$ for all $1 \leq i \leq r$. Suppose then, for the sake of contradiction, that this is the case. For $1 \leq i \leq r$, let $V_i$ be the union of all colour-$i$ components of size at most $2/\eps$. Evidently, the number of colour-$i$ components of size larger than $2/\eps$ is less than $\eps n/2$. Since $|V_i|$ is at least as large as the number of colour-$i$ components of size at most $2/\eps$, we have
  \begin{equation*}\label{eq:beta:Vi}
  |V_i| > \ell_i - \eps n/2 > (1 - \varphi(r) + \eps/2) \cdot n,
  \end{equation*}
  where in the last inequality we used our assumption that $\ell_i > (1 - \varphi(r) + \eps) \cdot n$.
   For each $1 \leq i \leq r$, put $W_i := V(G) \setminus V_i$, noting that
	\begin{equation}\label{eq:size_of_cliques}
	|W_i| = n - |V_i| <
	(\varphi(r) - \eps/2) \cdot n.
	\end{equation}

	Now, consider the Venn diagram of the sets $W_1,\dots,W_r$. Partition each of the (at most) $2^r$ ``regions" of this Venn diagram into sets of size $n/m$, plus a ``residual set" of size less than $n/m$. Then, collect all residual sets and partition their union into sets of size $n/m$. Let $U_1,\dots,U_m$ be the resulting partition of $V(G)$. Note that for each $1 \leq i \leq r$ and for all but at most $2^r$ of the indices $1 \leq s \leq m$, it holds that either $U_s \subseteq W_i$ or $U_s \cap W_i = \es$. Indeed, if $U_s$ is not contained in the union of residual sets, then $U_s$ is contained in one of the regions of the Venn diagram of $W_1,\dots,W_r$, implying that either $U_s \subseteq W_i$ or $U_s \cap W_i = \es$.
	\begin{claim}\label{claim:approximate_covering}
		For every pair $1 \leq s < t \leq m$, there exists $1 \leq i \leq r$ such that $|U_s \cap W_i| \geq \frac{n}{2mr}$ and $|U_t \cap W_i| \geq \frac{n}{2mr}$.
	\end{claim}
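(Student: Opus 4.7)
My plan is to argue by contradiction, leveraging the $\beta$-graph property. Suppose the claim fails for some $1 \leq s < t \leq m$; then for every colour $i$ at least one of $|U_s \cap W_i|, |U_t \cap W_i|$ is smaller than $n/(2mr)$. Let $I_s \subseteq [r]$ record the colours for which $|U_s \cap W_i| < n/(2mr)$ and let $I_t$ record the remaining colours (so that $i \in I_t$ forces $|U_t \cap W_i| < n/(2mr)$), whence $I_s \cup I_t = [r]$.

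I would then pass to the cleaned subsets
\[
U_s^{\ast} := U_s \cap \bigcap_{i \in I_s} V_i \qquad\text{and}\qquad U_t^{\ast} := U_t \cap \bigcap_{i \in I_t} V_i.
\]
A union bound gives $|U_s^{\ast}| \geq |U_s| - \sum_{i \in I_s} |U_s \cap W_i| > n/m - r \cdot n/(2mr) = n/(2m)$, and likewise for $U_t^{\ast}$. Since $\beta = \varepsilon/(16rm)$ is chosen so that $n/(2m)$ comfortably exceeds $\beta n$, the $\beta$-graph property produces an edge between $U_s^{\ast}$ and $U_t^{\ast}$; iterating greedily (removing the two endpoints of each found edge while both residual sets remain above $\beta n$) yields a matching $M$ between $U_s^{\ast}$ and $U_t^{\ast}$ of size $|M| \geq n/(2m) - \beta n \geq n/(3m)$. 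By pigeonhole over the $r$ colours, some colour $j$ is used by at least $|M|/r \geq n/(3mr)$ matching edges; since $j \in I_s \cup I_t$, one endpoint of each such edge already lies in $V_j$, so the containing colour-$j$ component is small (size at most $2/\varepsilon$), and hence both endpoints land in $V_j$.

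The main obstacle will be to upgrade this structural observation into a numerical contradiction matching the exact threshold $n/(2mr)$. A single small colour-$j$ component supports at most $\lfloor (2/\varepsilon)/2 \rfloor \leq 1/\varepsilon$ matching edges, so the pigeonhole-selected monochromatic matching is spread across at least $\varepsilon|M|/r$ distinct small colour-$j$ components, each contributing distinct vertices to both $U_s^{\ast}$ and $U_t^{\ast}$. From here one extracts the contradiction by refining the pigeonhole step to split the matching into edges whose common colour-$j$ component is large (which would directly provide the desired lower bounds on $|U_s \cap W_j|$ and $|U_t \cap W_j|$, proving the claim) versus small, and using the parameter relation $m \geq 2^{r+2}/\varepsilon$ to absorb the $2^r$-sized residual blocks introduced earlier while ensuring the accumulated vertices in small colour-$j$ components overflow what the cleaning into $U_s^{\ast}, U_t^{\ast}$ permits. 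This final bookkeeping---balancing the $\beta$-graph matching size, the component cap $2/\varepsilon$, and the chosen value of $m$---is where the real care is required.
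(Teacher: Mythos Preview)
Your initial setup matches the paper's: assume the claim fails, and for each colour $i$ delete the ``small'' intersection $U_s \cap W_i$ or $U_t \cap W_i$ from the appropriate side, leaving cleaned sets $U_s^\ast, U_t^\ast$ each of size at least $n/(2m)$, with the property that for every colour $i$ one of the two sets lies entirely inside $V_i$. So far so good.

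The gap is in what you extract from the $\beta$-graph property. You pull out a \emph{matching} of size roughly $n/(3m)$, and then try to derive a contradiction from it. This is too weak. The relevant upper bound is that for each colour $i$, every vertex of the side contained in $V_i$ has colour-$i$ degree less than $2/\varepsilon$ (since its colour-$i$ component is small), so $e_{G_i}(U_s^\ast,U_t^\ast) < (2/\varepsilon)\cdot n/m$; summing over colours gives $e_G(U_s^\ast,U_t^\ast) < 2rn/(\varepsilon m)$. A matching of size $n/(3m)$ is far below this, so no contradiction arises. Your proposed ``bookkeeping'' does not close the gap: the dichotomy between large and small colour-$j$ components is vacuous (by construction one endpoint of every matching edge of colour $j$ already sits in $V_j$, forcing the component to be small), and counting how many distinct small components the matching touches only recovers information you already have about $|U_s^\ast|$ and $|U_t^\ast|$. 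The relation $m \geq 2^{r+2}/\varepsilon$ plays no role inside this claim.

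What the paper does instead is count \emph{all} edges between the cleaned sets. The $\beta$-graph hypothesis, applied to a partition of $U_s^\ast$ into blocks of size $\beta n$, gives
\[
e_G(U_s^\ast,U_t^\ast) \;\geq\; \frac{|U_s^\ast|}{\beta n}\bigl(|U_t^\ast|-\beta n\bigr) \;\geq\; \frac{n}{8m^2\beta},
\]
which with $\beta = \varepsilon/(16rm)$ equals $2rn/(\varepsilon m)$ and directly contradicts the upper bound above. The point is that $\beta$ was chosen precisely to make the edge count (not a matching) large enough; replacing the full edge count by a greedy matching throws away a factor of order $|U_t^\ast|/(\beta n) \sim 1/(m\beta) = 16r/\varepsilon$, which is exactly the margin you need.
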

	\begin{proof}
		Let $1 \leq s < t \leq m$. Suppose, for the sake of contradiction, that for each $1 \leq i \leq r$ it holds that $|U_s \cap W_i| < \frac{n}{2mr}$ or $|U_t \cap W_i| < \frac{n}{2mr}$. We now define subsets $X \subseteq U_s$ and $Y \subseteq U_t$, as follows. For each $1 \leq i \leq r$, if $|U_s \cap W_i| < \frac{n}{2mr}$ then remove the elements of $U_s \cap W_i$ from $U_s$, and if $|U_t \cap W_i| < \frac{n}{2mr}$ then remove the elements of $U_t \cap W_i$ from $U_t$. Let $X$ be the set of remaining elements in $U_s$ and $Y$ be the set of remaining elements in $U_t$. By definition, $X \cap Y = \es$. Moreover, we have
		$|X| > |U_s| - r \cdot \frac{n}{2mr} = \frac{n}{2m}$ and similarly
		$|Y| > |U_t| - r \cdot \frac{n}{2mr} = \frac{n}{2m}$.
		Let us estimate the number of edges (in $G$) between $X$ and $Y$. To this end, fix $1 \leq i \leq r$ and suppose, without loss of generality, that $X \cap W_i = \es$ (the case $Y \cap W_i = \es$ is symmetrical). The definitions of $W_i$ and $V_i$ imply that for every $x \in X$, the colour-$i$ component containing $x$ has size at most $2/\eps$. This means that for every $x \in X$, there are less than $2/\eps$ edges of colour~$i$ incident to $x$. Hence,
		$e_{G_i}(X,Y) < 2/\eps \cdot |X| \leq 2/\eps \cdot |U_s| = \frac{2n}{\eps m}$.
		Summing over all colours $1 \leq i \leq r$, we conclude that
		\begin{equation}\label{eq:eXY:upper:bd}
		e_G(X,Y) = \sum_{i = 1}^{r}{e_{G_i}(X,Y)} < \frac{2rn}{\eps m}.
		\end{equation}
		On the other hand, since $G$ is a $\beta$-graph, there are at least $|Y| - \beta n$ edges between $X'$ and $Y$ for every subset $X' \subseteq X$ of size $\beta n$. Hence, by considering a partition of $X$ into sets of size $\beta n$, we see that
		\begin{equation}\label{eq:eXY:lower:bd}
		e_G(X,Y) \geq \left\lfloor \frac{|X|}{\beta n} \right\rfloor \cdot (|Y| - \beta n) \geq
		\frac{1}{2m\beta} \cdot \left( \frac{1}{2m} - \beta \right) \cdot n
		\geq
		\frac{n}{8m^2\beta},
		\end{equation}
		where in the second inequality we used the fact that $|X|,|Y| \geq \frac{n}{2m}$, and in the last inequality we used the fact that $\beta \leq \frac{1}{4m}$, which follows from our choice of $\beta$. By combining~\eqref{eq:eXY:upper:bd} and~\eqref{eq:eXY:lower:bd}, we get
		$
		\frac{1}{8m^2\beta} < \frac{2r}{\eps m}$, or, equivalently,
        $\beta > \frac{\eps}{16rm}$.
		But this contradicts our choice of $\beta$.
	\end{proof}
	Let us now complete the proof of the theorem using \cref{claim:approximate_covering}.
	For each $1 \leq i \leq r$, let $A_i$ be the set of all $s \in [m]$ such that $|U_s \cap W_i| \geq \frac{n}{2rm}$. By \cref{claim:approximate_covering}, for every $e = \{s,t\} \in \binom{[m]}{2}$ it holds that $e \subseteq A_i$ for some $1 \leq i \leq r$. In other words, $A_1,\dots,A_r$ is a covering by cliques of the edges of the complete graph on $[m]$. Hence, the definition of $\varphi(r,m)$ implies that
	\begin{equation}\label{eq:beta:Ai:lower:bd}
	\max_{1 \leq i \leq r}{|A_i|} \geq \varphi(r,m) \geq \varphi(r)  \cdot m,
	\end{equation}
  where the last inequality holds (for every $m$) by the results of \cites{HS92,Mil79}.

    On the other hand, fixing any $1 \leq i \leq r$, recall that for all but at most $2^r$ of the indices $1 \leq s \leq m$ it holds that either $U_s \subseteq W_i$ or $U_s \cap W_i = \es$. This implies that
	$|A_i| \leq |W_i| \cdot m/n + 2^r$.
	Combining this with~\eqref{eq:size_of_cliques}, we get that
	\begin{equation}\label{eq:beta:Ai:upper:bd}
	|A_i| < (\varphi(r) - \eps/2) \cdot m + 2^r
	\leq \varphi(r) \cdot m
	\end{equation}
	for every $1 \leq i \leq r$.
	But \eqref{eq:beta:Ai:upper:bd} stands in contradiction with \eqref{eq:beta:Ai:lower:bd}. This completes the proof.
\qed

\vspace{0.3cm}

Note that in the above proof, $\beta(r,\eps)$ is chosen as
$\beta(r,\eps) = \Theta_r(\eps^2)$. As $K_n$ is a $\beta$-graph
with $\beta = 1/n$, we may apply \cref{thm:beta} to $K_n$ with $\eps = \Omega_r(\frac{1}{\sqrt{n}})$, obtaining that in every $r$-colouring of $E(K_n)$ there is a spanning tree with at least $\varphi(r) \cdot n - O_r(\sqrt{n})$ edges of the same colour. As mentioned in the introduction, it would be interesting to obtain an exact result for the complete graph.

\section{Discrepancy of Hamilton Cycles}
\label{sec:HamCycles}
The goal of this section is to prove \cref{thm:ham}.
In the proof, we will use the following simple gadget. Let $H$ be the graph obtained by gluing two $4$-cycles at a vertex. So $H$ has $7$ vertices and $8$ edges.
We say that an edge-coloured copy of $H$ is {\em well-coloured} if there are two distinct colours, say $i$ and $j$, such that the vertex of degree four is incident to two edges of colour $i$ and two edges of colour $j$, and the two edges of each of the colours come from different $4$-cycles (there are no restrictions on the colours of the edges not touching the vertex of degree four). See Figure \ref{fig:gadget}. A crucial property of a well-coloured copy of $H$ is that the set of edges of any given colour spans a path forest.
We start by showing that in every graph with a minimum degree of slightly above $n/2$, one can find a well-coloured copy of $H$ or a long monochromatic \nolinebreak cycle.

\begin{figure}
  \captionsetup{width=0.879\textwidth,font=small}
    \centering
    \includegraphics{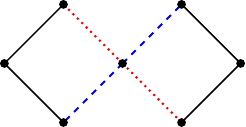}
    \caption{A well-coloured copy of $H$.}
    \label{fig:gadget}
\end{figure}

\begin{lemma}\label{lem:gadget or cycle}
Let $r \geq 2$, let $\eps > 0$, and let $G$ be a graph on $n$ vertices with $\delta(G)\ge (1/2 + \eps)n$, where $n \geq n_0(r,\eps)$. Then for every $r$-colouring of $E(G)$, there is a well-coloured copy of $H$ or a monochromatic cycle of length at least $(1/2 + \eps/2)n$.
\end{lemma}

\begin{proof}
Fix any $r$-colouring of the edges of $G$.
For $1\le i\le r$, define $V_i$ to be the set of all vertices $v\in V$ such that all but at most $r-1$ edges incident to $v$ are coloured in colour $i$. Let also $V_0=V(G) \setminus \bigcup_{i=1}^rV_i$.

If $V_0\ne\es$, then let $v\in V_0$, and let $i$ be a majority colour at $v$. Since $v\not\in V_i$, there are at least $r$ edges of $G$ incident to $v$ and not coloured $i$, and hence there is a colour $j\ne i$ with at least two edges incident to $v$ and coloured $j$; so let $w_1,w_2 \in V(G) \setminus \{v\}$ be such that $\{v,w_1\},\{v,w_2\}$ are coloured with colour $j$. Since $v$ has at least
$d(v)/r \geq (1/2+\eps)n/r \geq 2$ neighbours in colour $i$, there are distinct $u_1,u_2 \in V(G)$ such that $\{v,u_1\},\{v,u_2\}$ are coloured with colour $i$. Now, as $\delta(G) \geq (1/2+\eps)n$, every pair of vertices has at least $2\eps n \geq 5$ common neighbours. Hence, there are distinct $x_1,x_2 \notin \{v,u_1,u_2,w_1,w_2\}$ such that $x_k$ is adjacent to $u_k,w_k$ ($k = 1,2$). Now $v,u_1,u_2,w_1,w_2,x_1,x_2$ form a well-coloured copy of $H$, as required.

So from now on we can assume $V_0=\es$, hence $V(G) = V_1 \cup \dots \cup V_r$. Without loss of generality, assume that $|V_1| \geq n/r$. Fix any $2\le j\le r$.
By definition, all but at most $(r-1)|V_1|$ of the edges between $V_1$ and $V_j$ are coloured in $1$, and all but at most $(r-1)|V_j|$ of these edges are coloured in $j$. It follows that $|E(V_1,V_j)|\le (r-1)(|V_1|+|V_j|)$. Now, summing over all $j = 2,\dots,r$, we get that
\begin{equation}\label{eq:gadget or cycle}
\begin{split}
e(V_1,V(G) \setminus V_1)&\le
(r-1)^2|V_1| + (r-1)(n - |V_1|) = (r-1)(r-2)|V_1| + (r-1)n \\ &\leq 2(r-1)^2|V_1|.
\end{split}
\end{equation}
By averaging, there is $u \in V_1$ which sends at most $2(r-1)^2$ edges to $V(G) \setminus V_1$. Since $d(u) \geq (1/2 + \eps)n$, we have $|V_1| \geq (1/2 + \eps)n - 2(r-1)^2$.

Let $V'_1$ be the set of all $v \in V_1$ which send at most $(r-1)\sqrt{n}$ edges to $V(G) \setminus V_1$. By \eqref{eq:gadget or cycle} we have $|V_1 \setminus V'_1| < 2(r-1)|V_1|/\sqrt{n} \leq 2(r-1)\sqrt{n}$. For each $v \in V'_1$, delete all at most $r-1$ edges incident to $v$ which do not have colour $1$. In the resulting graph, each $v \in V'_1$ still has at least $(1/2+\eps)n - 3(r-1)\sqrt{n} - (r-1) \geq n/2 \geq |V'_1|/2$ neighbours inside $V'_1$ in the colour $1$. By Dirac's theorem, $G[V'_1]$ contains a Hamilton cycle in colour $1$, giving a monochromatic cycle of length $|V'_1| \geq |V_1| - 2(r-1)\sqrt{n} \geq (1/2+\eps)n - 2(r-1)^2 - 2(r-1)\sqrt{n} \geq (1/2+\eps/2)n$, as required.
\end{proof}

Next, we use \cref{lem:gadget or cycle} to show that in a graph with minimum degree significantly larger than $\frac{n}{2}$, one can find a monochromatic path forest which is significantly larger than $\frac{n}{r}$.

\begin{lemma}\label{lem:monochromatic path forest}
Let $r \geq 2$, let $\eps > 0$, and let $G$ be a graph on $n$ vertices with $\delta(G)\ge (1/2 + \eps)n$, where $n \geq n_0(r,\eps)$. Then for every $r$-colouring of $E(G)$, there is a monochromatic path forest of size at least
$\left(1+\frac{\eps}{7}\right)\frac{n}{r}$.
\end{lemma}

\begin{proof}
Let $k=\eps n/7$.
Applying \cref{lem:gadget or cycle} repeatedly, we find a monochromatic cycle/path of length at least $n/2$ in $G$
--- in which case we are done ---
or $k$ vertex-disjoint well-coloured copies $H_1,\ldots, H_k$ of $H$.
Indeed, after finding such a copy, we delete its vertices from the graph and continue. After $j$ steps, the graph has $n - 7j$ vertices and minimum degree at least $(1/2+\eps)n - 7j \geq (1/2+\eps/2) \cdot (n-7j)$, provided $j \leq k$.
We can therefore continue this process for $k$ steps.
Let $U=V-\bigcup_{j=1}^k V(H_j)$. Then $|U|=n-7k$, and $\delta(G[U])\ge (1/2+\eps)n - 7k \geq |U|/2$. So by Dirac's theorem, $U$ spans a Hamilton cycle $C_0$. We can assume $C_0$ is not monochromatic as otherwise we immediately get a monochromatic path of length at least $n/2$. Observe that for every colour $1\le i\le r$, the edges coloured by $i$ in all copies $H_j$ and the cycle $C_0$ form a monochromatic path forest. Indeed, this follows from the definition of a well-coloured copy of $H$.
Let $t_{i,j}$ be the number of edges of colour $i$ in $H_j$, and let $t_{i,0}$ be the number of edges of colour $i$ in $C_0$. Then we have
$$
\sum_{i=1}^r\sum_{j=1}^k t_{i,j}+\sum_{i=1}^rt_{i,0}=8k+n-7k=n+k\,,
$$
implying that one of these monochromatic path forests has a size of at least $(n+k)/r=\left(1+\frac{\eps}{7}\right)\frac{n}{r}$ as required.
\end{proof}

We note that Lemma \ref{lem:monochromatic path forest} is tight in the sense that minimum degree $\frac{n}{2}$ is not sufficient to force a monochromatic path forest of size larger than $\frac{n}{r}$. To see this, take a complete $\frac{n}{2} \times \frac{n}{2}$ bipartite graph with sides $A,B$. Partition $A$ into sets $A_1,\dots,A_r$ of size $\frac{n}{2r}$ each, and colour all edges between $A_i$ and $B$ with colour $i$. Then the largest colour-$i$ matching has size $\frac{n}{2r}$, and hence the largest colour-$i$ path forest has $\frac{n}{r}$ vertices.

The last tool we need to prove Theorem \ref{thm:ham} is the following lemma due to P\'osa~\cite{Pos63}.
\begin{lemma}[\cite{Pos63}]\label{lem:path_extension_to_Hamilton_cycle}
	Let $t \geq 0$ and let $G$ be a graph with $n$ vertices and minimum degree at least $\frac{n}{2} + t$. Let $E \subseteq E(G)$ be an edge-set which forms a path-forest and has size at most $2t$. Then there exists a Hamilton cycle in $G$ which uses all edges in $E$.
\end{lemma}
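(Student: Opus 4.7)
The plan is a variant of Pósa's rotation-extension argument in which every intermediate path is constrained to contain $E$ as a union of contiguous sub-paths, so that forced edges are never broken. Write the components of $E$ as vertex-disjoint paths $P_1, \ldots, P_k$ and call a path (or cycle) in $G$ \emph{$E$-good} if each $P_i$ appears in it as a contiguous sub-path. An $E$-good path exists trivially: iteratively merging the $P_i$ via $G$-edges is possible since $\delta(G) \geq n/2$ guarantees $G$ is highly connected.

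Fix a longest $E$-good path $P = v_0 v_1 \cdots v_\ell$ in $G$. By maximality all $G$-neighbours of $v_\ell$ lie on $P$, so $\ell \geq \delta(G) \geq n/2 + a$. A Pósa rotation at $v_\ell$ from a chord $v_\ell v_j$ produces the new path $v_0 \cdots v_j v_\ell v_{\ell-1} \cdots v_{j+1}$ ending at $v_{j+1}$; the rotation is \emph{admissible} (preserves $E$-goodness) precisely when $v_j v_{j+1} \notin E$. Since $|E| \leq 2a$, at most $2a$ indices are forbidden. Let $S$ denote the set of endpoints reachable from $v_\ell$ via sequences of admissible rotations (with $v_0$ held fixed). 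The classical Pósa closure inequality, adjusted for the $\leq 2a$ forbidden rotations, yields
\[
|N_G(S) \cap V(P)| \;\leq\; 2|S| + 2|E| \;\leq\; 2|S| + 4a.
\]
On the other hand, every $s \in S$ is itself the endpoint of a longest $E$-good path, so $N_G(s) \subseteq V(P)$, whence $|N_G(S) \cap V(P)| \geq \delta(G) - |S| \geq n/2 + a - |S|$. Combining these bounds gives $|S| \geq (n/2 - 3a)/3 = \Omega(n)$ (the regime of interest has $a \ll n$).

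If $V(P) \subsetneq V(G)$, then since $|S|$ is linear and $V(P)$ is missing at least one vertex, some $s \in S$ must have a $G$-neighbour outside $V(P)$ (otherwise $|N_G(s)| \leq |V(P)| - 1$, in tension with the size of $S$ and the degree bound); the corresponding $E$-good path then extends, contradicting the choice of $P$. Hence $P$ is an $E$-good Hamilton path. To close $P$ into a cycle, run the analogous rotation procedure with $v_\ell$ held fixed instead, producing a linear-sized set $S' \subseteq V(P)$ of possible other-endpoints of $E$-good Hamilton paths ending at $v_\ell$. Since $|S'| > n - d_G(v_\ell)$, some $s' \in S'$ is adjacent to $v_\ell$ in $G$, and that closing edge yields the desired $E$-good Hamilton cycle.

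The main obstacle is carefully verifying the closure inequality in the presence of forbidden indices: each forbidden position can block one ``boost'' to $S$, costing an additive $O(|E|) = O(a)$ in the bound on $|N_G(S)|$. This loss is precisely compensated by the $+a$ margin in the degree hypothesis $\delta(G) \geq n/2 + a$, a delicate balance that pins down the hypothesis bound of $2a$ on $|E|$ as the optimal threshold.
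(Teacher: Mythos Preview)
The paper does not actually prove this lemma: it is quoted verbatim from P\'osa~\cite{Pos63} and used as a black box. So there is no ``paper's own proof'' to compare against, only the question of whether your sketch stands on its own.

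It does not, and the gap is quantitative rather than conceptual. From your two displayed inequalities you extract $|S| \geq (n/2 - 3a)/3 \approx n/6$, and you then use this bound twice: once to force some $s \in S$ to have a neighbour outside $V(P)$, and once (in the symmetric form $|S'| > n - d_G(v_\ell)$) to close the Hamilton path into a cycle. Neither step goes through. For the closing step you need $|S'| + d_G(v_\ell) > n$, i.e.\ $|S'| > n/2 - a$; but your bound only gives $|S'| \gtrsim n/6$, which falls short by a factor of three. For the extension step the situation is the same: a vertex $w \notin V(P)$ has at least $n/2 + a$ neighbours, and you need one of them to lie in $S$; pigeonhole requires $|S| + d(w) > n$, which again asks for $|S| > n/2 - a$. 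The phrase ``in tension with the size of $S$ and the degree bound'' papers over exactly this shortfall. Note also that for $a \geq n/6$ your lower bound on $|S|$ is non-positive, while the lemma is stated for all $a \geq 0$.

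The underlying issue is that the bare P\'osa rotation inequality $|N(S)| \leq 2|S| + O(1)$, even without any forbidden edges, only yields $|S| \gtrsim \delta/3$ --- not enough to recover Dirac's theorem directly. The standard route here is instead the Ore/Dirac \emph{crossing} argument: on an $E$-good Hamilton path $v_0 \cdots v_{n-1}$, set $A = \{i : v_0 v_{i+1} \in E(G)\}$, $B = \{i : v_i v_{n-1} \in E(G)\}$, $F = \{i : v_i v_{i+1} \in E\}$, and observe that $|A \cap B| \geq 2a+1 > |F|$, so some $i \in (A \cap B) \setminus F$ exists and closes an $E$-good Hamilton cycle. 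The $+a$ in the degree hypothesis and the $2a$ bound on $|E|$ match up exactly here, which is the ``delicate balance'' you allude to --- but it lives in the crossing count, not in the rotation closure.
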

\begin{proof}[Proof of \cref{thm:ham}]
	Let $G$ be an $n$-vertex graph with $\delta(G) \geq \frac{(r+1)n}{2r} + d$, and fix any $r$-colouring of the edges of $G$. By Lemma \ref{lem:monochromatic path forest} with $\eps = \frac{1}{2r}$, there exists a monochromatic path forest $F$ of size $|F| \geq \left(1+\frac{\eps}{7}\right)\frac{n}{r} = \left( \frac{1}{r} + \frac{1}{14r^2} \right)n$. Take $E \subseteq F$ of size $|E| = \frac{n}{r} + 2d$; this is possible as $d \leq \frac{n}{28r^2}$ by assumption.
	By \cref{lem:path_extension_to_Hamilton_cycle} with $t = \frac{n}{2r} + d$, $G$ contains a Hamilton cycle which uses all edges in $E$, and hence has at least
	$|E| = \frac{n}{r} + 2d$
	edges of the same colour. This completes the proof.
\end{proof}

Observe that we use the high minimum degree of $G$ in the above proof in two places:
first, to obtain a large (i.e., of size significantly larger than $\frac{n}{r}$) monochromatic path-forest, and second, to incorporate this path-forest in a Hamilton cycle.
For the first part, it suffices that the minimum degree is (notably) larger
than $\frac{n}{2}$ (see \cref{lem:monochromatic path forest}).
It is in the second part that we use the assumption that the minimum degree is actually larger than $\frac{(r+1)n}{2r}$.

\begin{acknowledgements}
The authors wish to thank Matan Harel, Gal Kronenberg and Yinon Spinka for valuable discussions.
\end{acknowledgements}

\bibliography{library}

\end{document}